\DeclareMathOperator\Hom{Hom}
\DeclareMathOperator\im{im}
\DeclareMathOperator\id{id}
\DeclareMathOperator\dom{dom}
\DeclareMathOperator\ind{Ind}
\DeclareMathOperator\res{Res}
\DeclareMathOperator\groth{Groth}
\DeclareMathOperator\ann{Ann}
\DeclareMathOperator\len{len}
\newtheorem{mythm}{Theorem}%[section]
\newtheorem{myex}[mythm]{Example}%[section]
\newtheorem{mylem}[mythm]{Lemma}%[section]
\newtheorem{myprop}[mythm]{Proposition}%[section]
\newtheorem{mycor}[mythm]{Corollary}%[section]
\numberwithin{equation}{section}
\newcommand{\Z}{\mathbb{Z}}
\newcommand{\C}{\mathbb{C}}
\newcommand{\N}{\mathbb{N}}
\newcommand{\dmod}{D_{2n}\text{-Mod}}
\newcommand{\mmod}{\text{-Mod}}
\newcommand{\lb}{\llbracket}
\newcommand{\rb}{\rrbracket}
\newcommand{\rp}{\rrparenthesis}
\begin{document}
\title[Induction and restriction on representations of dihedral groups]
{Induction and restriction\\ on representations of dihedral groups}
\author[B.~Dubsky]{Brendan Dubsky}
%\date{\today}

\begin{abstract}
We study the algebras generated by restriction and induction operations on complex modules over dihedral groups. In the case where the orders of all dihedral groups involved are not divisible by four, we describe the relations, a basis, the center, and a decomposition into indecomposables of these algebras. 
\end{abstract}

\maketitle

\section{Introduction}\label{s1}
The present paper seeks to investigate the relations between compositions of restriction and induction functors when applied to modules over dihedral groups, and in particular to study certain algebras $A_{P,\mathcal{M}}$ derived from these relations. 

\subsection{Motivation}
Let $S_n$ be the symmetric group on $n$ elements, and $S_n$-mod the category of its finitely generated modules over $\C$. Then we have the usual induction functor $\ind_n:S_n\text{-mod}\rightarrow S_{n+1}\text{-mod}$ and restriction functor $\res_n:S_{n+1}\text{-mod}\rightarrow S_{n}\text{-mod}$. Consider now the direct sum of all of the categories $S_n$-mod, as $n$ ranges over the nonnegative integers. This category comes equipped with two exact endofunctors $\ind$ and $\res$ obtained by adding up all $\ind_n$ and $\res_n$ respectively. By taking the Grothendieck group of the whole construction, we obtain a vector space with two linear operators $[\res]$ and $[\ind]$ respectively. The classical Branching rule for the symmetric group (cf., e.g.,\cite[p. 77]{Sa01}) implies that these two linear operators satisfy the defining relations of the Heisenberg algebra (as is used for instance in \cite{Kh14}), namely 
\begin{equation*}
[\res][\ind]-[\ind][\res]=[\id].
\end{equation*}
Moreover, it is known that the above equality admits the functorial ``upgrade''
\begin{equation*}
\res\circ\ind\cong\id+\ind\circ\res.
\end{equation*}

From the study of the decomposition numbers of certain Hecke algebras emerged a refinement of the above to fields of positive characteristic, $p$. Using eigenvalues of Jucys-Murphy elements, the induction and restriction functors decompose into $p$ summands. In this way, the above gives rise to a specific representation of the affine Kac-Moody algebra $\hat{\mathfrak{sl}}_p$,  (cf. \cite{LLT95}, \cite{LLT96}, \cite{Ar96}, and \cite{Gr99}). This approach has since spawned many interesting results connecting the representation theories of various Hecke algebras with those of other algebras (cf., e.g. \cite{Ar01}, \cite{Ja05}, \cite{Ar06}, \cite{BK09}, \cite{Kh14}, \cite{RS15}, and \cite{MV18}).

The original motivation for the present paper comes from the attempt to investigate a similar construction (in the case of modules over complex numbers) for dihedral groups rather than the symmetric groups. There are several significant differences. While the symmetric groups are naturally included into each other with respect to the usual linear order on their set of indices (the set of nonnegative integers), natural inclusions of dihedral groups are given by the divisibility partial order on ther set of their orders. As a consequence, we have infinitely many ``elementary'' induction and restriction functors, naturally indexed by prime numbers. The aim of this paper is to understand the basic combinatorics which these functors generate on the level of the Grothendieck group. 

\subsection{Contents}
The paper is structured as follows. In Section \ref{s2}, we fix some notation and recall basic facts about the dihedral groups, their modules, and restrictions and inductions of the latter. In Section \ref{s3}, we describe the actions of the restriction and induction functors on all simple modules over the dihedral groups. We use this to define the algebras $A_{P,\mathcal{M}}$, which depend on a choice of a set $P$ of prime numbers and a collection (satisfying a few closedness conditions) $\mathcal{M}$ of simple dihedral group modules.
These algebras will be the main objects of study in this paper. 

Section \ref{s4} contains a couple of results on these algebras which can be obtained without restrictions on the orders of the involved dihedral groups. Section \ref{s5} is devoted to the study of the case of dihedral groups of the form $D_{2n}$ where $n$ is odd. We give a presentation and describe a basis of $A_{P,\mathcal{M}}$ in Theorems \ref{babybasisthm} and \ref{basisthm}. Furthermore, we describe the center of $A_{P,\mathcal{M}}$ in Theorem \ref{cthm} and use central idempotents to obtain a decomposition of $A_{P,\mathcal{M}}$ into a direct sum of two indecomposable algebras in Theorems \ref{babydecompthm} and \ref{decompthm}. In Corollary \ref{bicyccor}, we see that in certain cases, the indecomposable components of $A_{P,\mathcal{M}}$ can be described as tensor powers of the semigroup algebra of the classical bicyclic monoid (cf., e.g, \cite{CP64}). Finally, in Section \ref{s6}, we discuss the more difficult case involving all dihedral groups, tie up some loose ends, and speculate on possible further directions of study.

\subsection*{Acknowledgements}
The author is very much indebted to his advisor, Volodymyr Mazorchuk, for valuable discussions on the content of the paper as well as its presentation.

\section{Preliminaries}\label{s2}
\subsection{Miscellaneous notation, assumptions, and conventions}
By $\N$ we denote the set of nonnegative integers; the set of positive integers we denote by $\Z_{>0}$. 

We use double brackets to denote intervals (open, closed or half-open) of integers. For instance $\lb 1,4\rp=\{1,2,3\}$. 

All vector spaces (in particular modules, algebras etc) considered will be complex. 

All modules considered will be left modules.

By angled brackets $\langle A|B\rangle$ we mean the algebraic structure (of a kind specified by the context) generated by the elements $A$ subject to the relations $B$.

\subsection{Dihedral groups}
For each integer $n\ge 3$, the dihedral group $D_{2n}$ is defined by 
\begin{equation*}
D_{2n}=\langle r_n,s_n|r_n^n=1, s_nr_ns_n=r_n^{-1}\rangle,
\end{equation*}
and may be identified with its natural (real) representation, which is the group of symmetries of the regular $n$-gon inscribed in the unit circle such that $(1,0)$ is a vertex. Under this identification, $r_n$ corresponds to a rotation by $2\pi/n$, and $s_n$ corresponds to reflection with respect to the horizontal axis. Note that we consider dihedral groups for $n=1,2$ undefined, in contrast to going the Coxeter route where it is natural to define dihedral groups also for these $n$. This will have important consequences for the structure of our main objects of study. For a brief discussion on the case of defined dihedral groups for $n=1,2$, see Section \ref{s6}.

\subsection{Modules over dihedral groups}
For any integer $n\ge 3$, let us define $V_{a,b}(n)$ to be the one-dimensional complex $D_{2n}$-module with $r_n$-action given by multiplication with $a$ and $s_n$-action given by multiplication with $b$. Here $b\in\{1,-1\}$, and $a=1$ if $n$ is odd while $a\in\{1,-1\}$ if $n$ is even. 

Also define for any integers $k$ and $n\ge 3$ the two-dimensional complex $D_{2n}$-module $W_k(n)$ with $r_n$-action given by $\begin{pmatrix} e^{2\pi i k/n}&0\\0&e^{-2\pi i k/n}\end{pmatrix}$ and $s_n$-action given by $\begin{pmatrix} 0&1\\1&0\end{pmatrix}$ (both matrices are with respect to the standard basis). 

Let us for technical reasons also define $V_{a,b}(n)=0$ and $W_k(n)=0$ whenever $n$ is not an integer greater than or equal to 3, or (in the latter case) when $k$ is not an integer.

The modules $W_k(n)$ are further described in the following easy Lemma. 
\begin{mylem}
\label{wlem}
\begin{enumerate}
\item[$($i$)$]
If $k\equiv \pm l\text{ (mod $n$)}$, then $W_k(n)\cong W_l(n)$.
\item[$($ii$)$]
The module $W_k(n)$ is indecomposable (hence simple) if $k\not\in\frac{1}{2}\Z n$. 
\item[$($iii$)$]
If $k\in \Z n$, then $W_k(n)\cong V_{1,1}(n)\oplus V_{1,-1}(n)$. 
\item[$($iv$)$]
If $k\in \frac{1}{2}\Z n\backslash \Z n$, then $W_k(n)\cong V_{-1,1}(n)\oplus V_{-1,-1}(n)$.
\end{enumerate}
\end{mylem}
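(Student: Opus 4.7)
The plan is to read each of the four statements off directly from the explicit matrix description of $W_k(n)$. Throughout, I will write $R_k:=\begin{pmatrix}e^{2\pi ik/n}&0\\0&e^{-2\pi ik/n}\end{pmatrix}$ and $S:=\begin{pmatrix}0&1\\1&0\end{pmatrix}$ for the matrices of $r_n$ and $s_n$ on $W_k(n)$, and use that $D_{2n}$ is finite and we work over $\C$, so Maschke's theorem makes ``indecomposable'' synonymous with ``simple'' in part (ii).

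For (i), when $k\equiv l\pmod n$ the matrices $R_k$ and $R_l$ coincide literally, since $e^{2\pi ik/n}$ depends only on $k$ modulo $n$; the two modules are then equal on the nose. When $k\equiv -l\pmod n$, a direct computation gives $SR_kS^{-1}=R_{-k}=R_l$, while trivially $SSS^{-1}=S$, so $S$ itself provides a $D_{2n}$-intertwiner $W_k(n)\to W_l(n)$, invertible and hence an isomorphism.

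For (ii), the assumption $k\notin\frac{1}{2}\Z n$ unwinds to $(e^{2\pi ik/n})^2\neq 1$, i.e.\ the two eigenvalues of $R_k$ are distinct. The two coordinate axes of $\C^2$ are then the only $R_k$-stable lines, so any proper nonzero $D_{2n}$-submodule would have to be one of them; however $S$ swaps these two lines, so neither is $s_n$-stable, ruling out proper nonzero submodules.

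For (iii) and (iv), the hypotheses force $e^{2\pi ik/n}=1$ and $e^{2\pi ik/n}=-1$ respectively, so $R_k=\pm I$; in the second case $k\in\frac{1}{2}\Z n\setminus\Z n$ forces $n$ to be even, which is exactly what is needed for $V_{-1,\pm 1}(n)$ to be defined in the first place. With $r_n$ now acting as a scalar, the decomposition reduces to diagonalizing $S$, whose $+1$- and $-1$-eigenlines are spanned by $(1,1)$ and $(1,-1)$; identifying these one-dimensional submodules with the appropriate $V_{\pm 1,1}(n)$ and $V_{\pm 1,-1}(n)$ yields the stated decompositions. The only real subtlety is the parity bookkeeping in (iii) and (iv) to ensure each module named on the right-hand side is defined; beyond that, the whole lemma is direct two-by-two linear algebra.
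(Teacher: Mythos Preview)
Your proof is correct and follows essentially the same approach as the paper's own proof: the same conjugation by $S$ for part (i), and the same eigenvector analysis of $R_k$ and $S$ for parts (ii)--(iv). The only cosmetic difference is that for (ii) you argue from the $R_k$-eigenlines and show neither is $S$-stable, whereas the paper starts from the $S$-eigenvectors and shows neither is an $R_k$-eigenvector; these are the same observation viewed from opposite sides.
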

\begin{proof}
Statement (i) holds because if $k\equiv l\text{ (mod $n$)}$ then $\id:W_k(n)\xrightarrow{\sim} W_l(n)$, and if $k\equiv -l\text{ (mod $n$)}$ then $s_n\cdot:W_k(n)\xrightarrow{\sim} W_l(n)$. 

Statements (ii), (iii) and (iv) hold because $\begin{pmatrix} 0&1\\1&0\end{pmatrix}$ has eigenvectors $(1,1)$ and $(1,-1)$, neither of which is an eigenvector of $\begin{pmatrix} e^{2\pi i k/n}&0\\0&e^{-2\pi i k/n}\end{pmatrix}$ unless $k\in\frac{1}{2}\Z n$, in which case they form a basis of $V_{1,1}(n)$ and $V_{1,-1}(n)$ respectively $V_{-1,1}(n)$ and $V_{-1,-1}(n)$. 
\end{proof}

The classification of simple $D_{2n}$-modules is given in the following proposition, and a proof can be found e.g. as Theorems 3.4.1 and 3.4.2 in \cite{So14}. 
\begin{myprop}
The simple $D_{2n}$-modules have either dimension 1 or 2. These are of the forms
\begin{enumerate}
\item[$($i$)$]
$V_{a,b}(n)$, for any integer $n\ge 3$, for $b\in\{1,-1\}$, for $a=1$ in case of odd $n$, and for $a\in\{1,-1\}$ in case of even $n$.
\item[$($ii$)$]
$W_k(n)$, for any integer $n\ge 3$ and $k\in \lb 1,n/2\rp$.  
\end{enumerate}
Also, these modules are nonisomorphic. 
\end{myprop}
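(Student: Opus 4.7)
The plan is to establish the three claims of the proposition in turn: (a) each listed module is simple, (b) the list consists of pairwise nonisomorphic modules, and (c) the list is exhaustive. Since a reference to a textbook proof is given, I will aim for a clean argument via character-theoretic counting.

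First, I would verify simplicity. The $V_{a,b}(n)$ are one-dimensional, hence trivially simple. For $W_k(n)$ with $k\in\lb 1,n/2\rp$, observe that $0<k<n/2$ means $2k/n\in(0,1)$, so $k\notin\frac{1}{2}\Z n$; Lemma \ref{wlem}(ii) then gives simplicity immediately.

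Next, I would prove pairwise non-isomorphism by looking at the action of $r_n$. The $V_{a,b}(n)$ are distinguished by the scalars $a$ and $b$ by which $r_n$ and $s_n$ act, so they are pairwise nonisomorphic among themselves. Any $V_{a,b}(n)$ is non-isomorphic to any $W_k(n)$ by dimension. For two modules $W_k(n)$ and $W_l(n)$ with $k,l\in\lb 1,n/2\rp$, the spectrum of $r_n$ on $W_k(n)$ is $\{e^{2\pi ik/n},e^{-2\pi ik/n}\}$, and the restriction $k\in\lb 1,n/2\rp$ forces $k\not\equiv\pm l\pmod n$ unless $k=l$, so the spectra differ and the modules are non-isomorphic.

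For exhaustiveness I would invoke the sum-of-squares identity $\sum_V(\dim V)^2=|D_{2n}|=2n$ over a complete set of simple $D_{2n}$-modules. If $n$ is odd, the list contains $2$ one-dimensional and $(n-1)/2$ two-dimensional modules, giving $2\cdot 1+\tfrac{n-1}{2}\cdot 4=2n$. If $n$ is even, the list contains $4$ one-dimensional and $n/2-1$ two-dimensional modules, giving $4\cdot 1+(n/2-1)\cdot 4=2n$. In either case the total equals $2n$, so by Maschke's theorem and the standard Wedderburn decomposition of $\C[D_{2n}]$, there can be no further simple modules.

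The main mild obstacle is just bookkeeping in the two parity cases; the only subtlety is making sure the indexing range $\lb 1,n/2\rp$ is correctly interpreted (strictly between $0$ and $n/2$) so that no boundary values where $W_k(n)$ decomposes, as described in parts (iii) and (iv) of Lemma \ref{wlem}, are included. Once this is handled, the counting argument closes the proof.
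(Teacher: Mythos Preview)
Your argument is correct. The paper does not actually supply a proof of this proposition; it simply cites the reference \cite{So14} (Theorems 3.4.1 and 3.4.2 there) and moves on. Your self-contained treatment---simplicity via Lemma~\ref{wlem}(ii), non-isomorphism via the $r_n$-spectrum, and exhaustiveness via the sum-of-squares identity $\sum(\dim V)^2=|D_{2n}|$---is the standard approach and is entirely sound. The only thing to add is that your proof is more informative than the paper's bare citation, and your remark about the half-open interval $\lb 1,n/2\rp$ excluding the decomposable boundary cases of Lemma~\ref{wlem}(iii)--(iv) is exactly the right point to flag.
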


\section{The restriction and induction functors}\label{s3}
Let $\dmod$ denote the category of all left $D_{2n}$-modules. For any integer $p$, there is a natural inclusion
\begin{equation}
\label{incleq}
\begin{aligned}
D_{2n}&\hookrightarrow D_{2pn}\\
r_n&\mapsto r_{pn}^{p}\\
s_n&\mapsto s_{pn}.
\end{aligned}
\end{equation}
Since any such inclusion factors into ones where $p$ is prime, we will throughout the rest of this text without loss of generality assume that $p$ is prime. 

With respect to these inclusions, we have the induction and restriction functors 
\begin{align*}
\ind_n^{pn}: D_{2n}\mmod&\rightarrow D_{2pn}\mmod\\
M&\mapsto D_{2pn}\otimes_{\C[D_{2n}]} M
\end{align*}
and
\begin{align*}
\res_n^{pn}: D_{2pn}\mmod&\rightarrow D_{2n}\mmod\\
M&\mapsto M_{|D_{2n}}=D_{2n}\otimes_{\C[D_{2n}]} M
\end{align*}
respectively. In particular, it is understood that we will only consider induction and restriction between dihedral groups whose order differ by a prime factor. In what follows we will -- somewhat sloppily -- write $\res_p$ and $\ind_p$ instead of $\res_n^{pn}$ and $\ind_n^{pn}$ whenever the intended functors should be clear from the context.

The following proposition describes the actions of the induction and restriction functors on simple modules.
\begin{myprop}
\label{resindprop}
Restriction and induction act as follows on simple dihedral group modules. 
\begin{enumerate}
\item[$($i$)$]
$\res_p V_{a,b}(np)\cong\begin{cases}
        V_{a,b}(n), & \mbox{if } p\ne 2,\\
        V_{|a|,b}(n) & \mbox{if } p=2.
        \end{cases}$
\item[$($ii$)$]
$\res_p W_k(np)\cong W_k(n)\cong \begin{cases}
        W_{\pm k \text{ (mod $n$)}\in \lb 1,n/2\rp}(n), & \mbox{if } k\not\in\frac{1}{2}\Z n,\\
        V_{1,1}(n)\oplus V_{1,-1}(n) & \mbox{if } k\in\Z n,\\
        V_{-1,1}(n)\oplus V_{-1,-1}(n) & \mbox{if $n$ is even and } k\in\frac{1}{2}\Z n\backslash \Z n.
        \end{cases}$
\item[$($iii$)$]
$\ind_p V_{a,b}(n)\cong \begin{cases}
        V_{a,b}(pn)\oplus\bigoplus_{j\in \lb 1,\frac{p-1}{2}\rb}W_{jn}(pn), & \mbox{if } p\ne 2\text{ and }a=1,\\
        V_{a,b}(pn)\oplus\bigoplus_{j\in \lb 1,\frac{p-1}{2}\rb}W_{(j-\frac{1}{2})n}(pn), & \mbox{if }p\ne 2\text{ and }a=-1\text{ (where $n$ is even)},\\
        V_{1,b}(pn)\oplus V_{-1,b}(pn) & \mbox{if } p=2\text{ and }a=1,\\
        W_{n/2}(pn) & \mbox{if } p=2\text{ and }a=-1\text{ (where $n$ is even)}.
        \end{cases}$
\item[$($iv$)$]
$\ind_p W_k(n)\cong \begin{cases}
        W_k(pn)\oplus\bigoplus_{j\in\lb 1,\frac{p-1}{2}\rb}(W_{-k+jn}(pn)\oplus W_{k+jn}(pn)), & \mbox{if } p\ne 2,\\
        W_k(pn)\oplus W_{-k+n}(pn) & \mbox{if } p=2.
        \end{cases}$
\end{enumerate}
\end{myprop}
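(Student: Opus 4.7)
The plan is to establish (i) and (ii) by direct computation with the inclusion \eqref{incleq}, and then to derive (iii) and (iv) from these by Frobenius reciprocity. For (i), under the substitution $r_n \mapsto r_{pn}^{p}$, $s_n \mapsto s_{pn}$, the generator $r_n$ acts on $\res_p V_{a,b}(pn)$ as multiplication by $a^p$, which equals $a$ when $p$ is odd and equals $1 = |a|$ when $p = 2$, while $s_n$ continues to act as $b$. For (ii), the same substitution turns the $r_{pn}$-matrix $\operatorname{diag}(e^{\pm 2\pi i k/(pn)})$ into $\operatorname{diag}(e^{\pm 2\pi i k/n})$ and leaves the $s_{pn}$-matrix unchanged, so the restricted module equals $W_k(n)$ in its defining presentation; Lemma \ref{wlem} then yields the three subcases.

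For (iii) and (iv), semisimplicity of $\C[D_{2pn}]$ ensures that $\ind_p M$ is determined by the multiplicities $\dim \Hom_{D_{2pn}}(\ind_p M, N)$ as $N$ ranges over the simples of $D_{2pn}$, and Frobenius reciprocity identifies these with $\dim \Hom_{D_{2n}}(M, \res_p N)$. Using (i) and (ii) to decompose each $\res_p N$, every such Hom-dimension reduces to the multiplicity of $M$ as a summand of a now explicitly known semisimple module. For example, in (iii) with $p$ odd and $a = 1$, part (i) gives multiplicity $1$ for $V_{1,b}(pn)$ and multiplicity $0$ for $V_{-1,b'}(pn)$, while (ii) combined with Lemma \ref{wlem}(iii) shows that $V_{1,b}(n)$ occurs in $\res_p W_{k'}(pn)$ exactly when $k' \in \Z n$, producing the summands $W_{jn}(pn)$ for $j \in \lb 1, (p-1)/2\rb$. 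The remaining cases of (iii) and (iv) are handled analogously.

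The main obstacle is the case-by-case bookkeeping. One must keep track of the parity constraints (such as $V_{-1,b}$ existing only for even $n$, and the set $\tfrac{1}{2}\Z n \setminus \Z n$ being empty for odd $n$), and, for part (iv), bring the indices $\pm k + jn$ that arise into the canonical range $\lb 1, pn/2\rp$ via Lemma \ref{wlem}(i), verifying that no two of the resulting $W_{k'}(pn)$ coincide. The dimension identities $p \cdot \dim M = \dim \ind_p M$ at the end of each case serve as a convenient sanity check that the enumeration is complete.
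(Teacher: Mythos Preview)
Your proposal is correct and follows exactly the same strategy as the paper: direct computation with the inclusion \eqref{incleq} for (i) and (ii), then Frobenius reciprocity for (iii) and (iv), with Lemma \ref{wlem} handling the decomposition in (ii). In fact you supply considerably more detail than the paper, which dispatches (iii) and (iv) with the single sentence ``This is done by applying Frobenius reciprocity.''
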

\begin{proof}
{\bf Part (i):} The generator $r_n$ acts on $V_{a,b}(pn)_{|D_{2n}}$ via $r_{pn}^p$, which acts by $a$ if $p$ is odd, and by $|a|$ if $p$ is even. The generator $s_n$ acts on $V_{a,b}(pn)_{|D_{2n}}$ via $s_{pn}$, which acts by $b$. 

{\bf Part (ii):} The generator $r_n$ acts on $W_k(pn)_{|D_{2n}}$ via $r_{pn}^p$, which acts by $\begin{pmatrix} e^{2\pi i kp/(pn)}&0\\0&e^{-2\pi i kp/(pn)}\end{pmatrix}=\begin{pmatrix} e^{2\pi i k/n}&0\\0&e^{-2\pi i k/n}\end{pmatrix}$. The generator $s_n$ acts on $W_k(pn)_{|D_{2n}}$ via $s_{pn}$, which acts by $\begin{pmatrix} 0&1\\1&0\end{pmatrix}$. Hence we have the isomorphism $\res_p W_k(np)\cong W_k(n)$. The rest now follows from Lemma \ref{wlem}.

{\bf Part (iii):} This is done by applying Frobenius reciprocity.

{\bf Part (iv):} Likewise. 
\end{proof}
The action of various inductions and restrictions on the simple modules over the dihedral groups define a partial order on those simple modules: for modules $M$ and $N$ we define $M\le N$ if and only if $M\cong N$ or $M$ is a summand of some restriction of $N$. We may conveniently illustrate these actions using the Hasse diagrams with respect to these partial orders; this is done in Figures 1 and 2. These diagrams are analogous to the Bratteli diagrams used for instance to study restriction and induction in the case of symmetric groups (see for instance \cite{BS05}), but here the underlying ordering of the group algebras is not linear and furthermore the trivial group algebra is not included. We will call these graphs \emph{induction/restriction diagrams}.  
\begin{comment}
\dots &&&&&&&&&&&\\
W_m(p^2m)&&&&&&&&&&&\\
&&&&&&&&&&&\\
&&&&&&&&&&&\\
&&&&&&&&&&&\\
&&&&&&&&&&&
\end{comment}
\begin{sidewaysfigure}
\label{figure1}
\begin{centering}

\xymatrix@!=1pc@C=0.01pt{
&&&&&&&&&&&&&&&&&&&&&&&&&&&&&&&&&&&&&&&&&&&&&&&&&&&&&&&&&&&\\
&&&&&&&&&&&&&&&&&&&&&&&&&&&&&&&&&&&&&&&&&&&&&&&&&&&&&&&&&&&\\
&&&&&&&&&&&&&&&&&&&&&&&&&&&&&&&&&&&&&&&&&&&&&&&&&&&&&&&&&&&\\
&&&&&&&&&&&&&&&&&&&&&&&&&&&&&&&&&&&&&&&&&&&&&&&&&&&&&&&&&&&\\
&&&&&&&&&&&&&&&&&&&&&&&&&&&&&&&&&&&&&&&&&&&&&&&&&&&&&&&&&&&\\
&&&&&&&&&&&&&&&&&&&&&&&&&&&&&&&&&&&&&&&&&&&&&&&&&&&&&&&&&&&\\
&&&&&&&&&&&&&&&&&&&&&&&&&&&&&&&&&&&&&&&&&&&&&&&&&&&&&&&&&&&\\
&&&&&&&&&&&&&&&&&&&&&&&&&&&&&&&&&&&&&&&&&&&&&&&&&&&&&&&&&&&\\
&&&&&&&&&&&&&&&&&&&&&&&&&&&&&&&&&&&&&&&&&&&&&&&&&&&&&&&&&&&\\
&&&&&&&&&&&&&&&&&&&&&&&&&&&&&&&&&&&&&&&&&&&&&&&&&&&&&&&&&&&\\
&&&&&&&&&&&&&&&&&&&&&&&&&&&&&&&&&&&&&&&&&&&&&&&&&&&&&&&&&&&\\
&&&&&&&&&&&&&&&&&&&&&&&&&&&&&&&&&&&&&&&&&&&&&&&&&&&&&&&&&&&\\
&&&&&&&&&&&&&&&&&&&&&&&&&&&&&&&&&&&&&&&&&&&&&&&&&&&&&&&&&&&\\
&&{\scriptstyle W_{1}(pm)}\ar@{-}[ul]\ar@{-}[ull]\ar@{-}[u]\ar@{-}[ur]\ar@{-}[urr]&&&&&{\scriptstyle W_{-1+m}(pm)}\ar@{-}[ul]\ar@{-}[ull]\ar@{-}[u]\ar@{-}[ur]\ar@{-}[urr]&&&&&{\scriptstyle W_{1+m}(pm)}\ar@{-}[ul]\ar@{-}[ull]\ar@{-}[u]\ar@{-}[ur]\ar@{-}[urr]&&&&&{\scriptstyle W_{-1+2m}(pm)}\ar@{-}[ul]\ar@{-}[ull]\ar@{-}[u]\ar@{-}[ur]\ar@{-}[urr]&&&&&{\scriptstyle \dots}&&&&&{\scriptstyle W_{1+\frac{p-1}{2}m}(pm)}\ar@{-}[ul]\ar@{-}[ull]\ar@{-}[u]\ar@{-}[ur]\ar@{-}[urr]&&&&&&&&&&&&&&&&&&&&&&&&&&&&&&&&\\
&&&&&&&&&&&&&&&&&{\scriptstyle \dots}&&&&&&&&&&&&&&&&&&&&&&&&&&&&&&&&&&&&&&&&&&\\
&&&&&&&&&&&&&&{\scriptstyle W_1(m)}\ar@{-}[uullllllllllll]\ar@{-}[uulllllll]\ar@{-}[uull]\ar@{-}[uurrr]\ar@{-}[uurrrrrrrrrrrrr]&&&&&&&&&&&&&&&&&&&&&&&{\scriptstyle W_2(m)}\ar@{-}[ul]\ar@{-}[ull]\ar@{-}[u]\ar@{-}[ur]\ar@{-}[urr]&&&&\dots&&&&{\scriptstyle W_{\frac{m-1}{2}}(m)}\ar@{-}[ul]\ar@{-}[ull]\ar@{-}[u]\ar@{-}[ur]\ar@{-}[urr]&&&&&&&&&&&&&&\\
&&&&&&&&&&&&&&&&&&&&&&&&&&&&&&&&&&&&&&&&&&&&&&&&&&&&&&&&&&&\\
&&{\scriptstyle W_m(p^2m)}\ar@{-}[ul]\ar@{-}[ull]\ar@{-}[u]\ar@{-}[ur]\ar@{-}[urr]&&&&&{\scriptstyle W_{-m+pm}(p^2m)}\ar@{-}[ul]\ar@{-}[ull]\ar@{-}[u]\ar@{-}[ur]\ar@{-}[urr]&&&&&{\scriptstyle W_{m+pm}(p^2m)}\ar@{-}[ul]\ar@{-}[ull]\ar@{-}[u]\ar@{-}[ur]\ar@{-}[urr]&&&&&{\scriptstyle W_{-m+2pm}(p^2m)}\ar@{-}[ul]\ar@{-}[ull]\ar@{-}[u]\ar@{-}[ur]\ar@{-}[urr]&&&&&{\scriptstyle \dots}&&&&&{\scriptstyle W_{m+\frac{p-1}{2}pm}(p^2m)}\ar@{-}[ul]\ar@{-}[ull]\ar@{-}[u]\ar@{-}[ur]\ar@{-}[urr]&&&&&{\scriptstyle V_{1,1}(p^2m)}\ar@{-}[ul]\ar@{-}[ull]\ar@{-}[u]\ar@{-}[ur]\ar@{-}[urr]&&&&&{\scriptstyle W_{pm}(p^2m)}\ar@{-}[ul]\ar@{-}[ull]\ar@{-}[u]\ar@{-}[ur]\ar@{-}[urr]&&&&&{\scriptstyle W_{2pm}(p^2m)}\ar@{-}[ul]\ar@{-}[ull]\ar@{-}[u]\ar@{-}[ur]\ar@{-}[urr]&&&&&{\scriptstyle \dots}&&&&&{\scriptstyle W_{\frac{p-1}{2}pm}(p^2m)}\ar@{-}[ul]\ar@{-}[ull]\ar@{-}[u]\ar@{-}[ur]\ar@{-}[urr]&&&&&{\scriptstyle V_{1,-1}(p^2m)}\ar@{-}[ul]\ar@{-}[ull]\ar@{-}[u]\ar@{-}[ur]\ar@{-}[urr]&&\\
&&&&&&&&&&&&&&&&&&&&&&&&&&&&&&&{\scriptstyle \dots}&&&&&&&&{\scriptstyle \dots}&&&&&&&&&&&&&&&&&&&&\\
&&&&&&&&&&&&&&&&&&&&&&&&&&&&&&&&{\scriptstyle V_{1,1}(pm)}\ar@{-}[uu]\ar@{-}[uurrrrr]\ar@{-}[uurrrrrrrrrr]\ar@{-}[uurrrrrrrrrrrrrrrrrrrr]&&&&&{\scriptstyle W_m(pm)}\ar@{-}[uullllllllllllllllllllllllllllllllll]\ar@{-}[uulllllllllllllllllllllllllllll]\ar@{-}[uullllllllllllllllllllllll]\ar@{-}[uulllllllllllllllllll]\ar@{-}[uulllllllll]&&&&&{\scriptstyle W_{2m}(pm)}\ar@{-}[ul]\ar@{-}[ull]\ar@{-}[u]\ar@{-}[ur]\ar@{-}[urr]&&&&&{\scriptstyle \dots}&&&&&{\scriptstyle W_{\frac{p-1}{2}}(pm)}\ar@{-}[ul]\ar@{-}[ull]\ar@{-}[u]\ar@{-}[ur]\ar@{-}[urr]&&&&&{\scriptstyle V_{1,-1}(pm)}\ar@{-}[uu]\ar@{-}[uulllll]\ar@{-}[uulllllllllllllll]\ar@{-}[uullllllllllllllllllll]&&\\
&&&&&&&&&&&&&&&&&&&&&&&&&&&&&&&&&&&&&&&{\scriptstyle \dots}&&&&&&&&&&&&&&{\scriptstyle \dots}&&&&&&\\
&&&&&&&&&&&&&&&&&&&&&&&&&&&&&&&&{\scriptstyle V_{1,1}(m)}\ar@{-}[uu]\ar@{-}[uurrrrr]\ar@{-}[uurrrrrrrrrr]\ar@{-}[uurrrrrrrrrrrrrrrrrrrr]&&&&&&&&&&&&&&&&&&&&&&&&&{\scriptstyle V_{1,-1}(m)}\ar@{-}[uu]\ar@{-}[uulllll]\ar@{-}[uulllllllllllllll]\ar@{-}[uullllllllllllllllllll]&&
}
\end{centering}
\caption{The induction/restriction diagram of all simple $D_{2mp^l}$-modules, where $p$ is an odd prime, where $l\in\N$, and where $m$ is either equal to $p$ or odd and not divisible by $p$. The diagram has $\frac{m+1}{2}$ connected components.}
\end{sidewaysfigure}
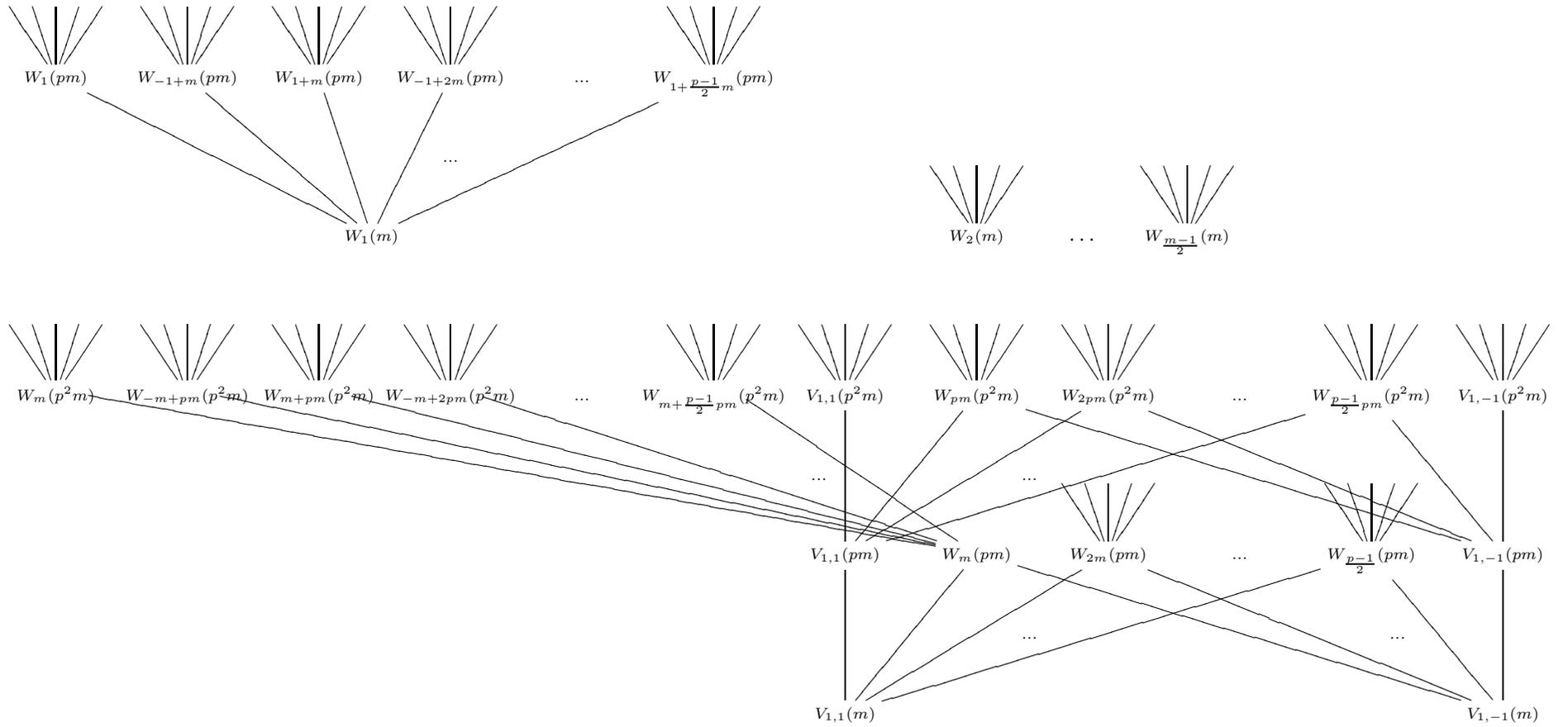

\begin{sidewaysfigure}
\label{figure2}
\begin{centering}

\xymatrix@!=1pc@C=0.01pt{
&&&&&&&&&&&&&&&&&&&&&&&&&&&&&&&&&&&&&&&&&&&&&&&&&&&&&&&&&&&&&&&\\
&&&&&&&&&&&&&&&&&&&&&&&&&&&&&&&&&&&&&&&&&&&&&&&&&&&&&&&&&&&&&&&\\
&&&&&&&&&&&&&&&&&&&&&&&&&&&&&&&&&&&&&&&&&&&&&&&&&&&&&&&&&&&&&&&\\
&&&&&&&&&&&&&&&&&&&&&&&&&&&&&&&&&&&&&&&&&&&&&&&&&&&&&&&&&&&&&&&\\
&&&&&&&&&&&&&&&&&&&&&&&&&&&&&&&&&&&&&&&&&&&&&&&&&&&&&&&&&&&&&&&\\
&&&&&&&&&&&&&&&&&&&&&&&&&&&&&&&&&&&&&&&&&&&&&&&&&&&&&&&&&&&&&&&\\
&&&&&&&&&&&&&&&&&&&&&&&&&&&&&&&&&&&&&&&&&&&&&&&&&&&&&&&&&&&&&&&\\
&&&&&&&&&&&&&&&&&&&&&&&&&&&&&&&&&&&&&&&&&&&&&&&&&&&&&&&&&&&&&&&\\
&&&&&&&&&&&&&&&&&&&&&&&&&&&&&&&&&&&&&&&&&&&&&&&&&&&&&&&&&&&&&&&\\
&&&&&&&&&&&&&&&&&&&&&&&&&&&&&&&&&&&&&&&&&&&&&&&&&&&&&&&&&&&&&&&\\
&&&&&&&&&&&&&&&&&&&&&&&&&&&&&&&&&&&&&&&&&&&&&&&&&&&&&&&&&&&&&&&\\
&&&&&&&&&&&&&&&&&&&&&&&&&&&&&&&&&&&&&&&&&&&&&&&&&&&&&&&&&&&&&&&\\
&&&&&&&&&&&&&&&&&&&&&&&&&&&&&&&&&&&&&&&&&&&&&&&&&&&&&&&&&&&&&&&\\
{\scriptstyle V_{1,1}(8m)}\ar@{-}[ur]\ar@{-}[u]&&&&{\scriptstyle V_{-1,1}(8m)}\ar@{-}[ur]&&&&{\scriptstyle W_{m}(8m)}\ar@{-}[ul]\ar@{-}[ur]&&&&{\scriptstyle W_{2m}(8m)}\ar@{-}[ul]\ar@{-}[ur]&&&&{\scriptstyle W_{-m+4m}(8m)}\ar@{-}[ul]\ar@{-}[ur]&&&&{\scriptstyle V_{-1,-1}(8m)}\ar@{-}[ul]&&&&{\scriptstyle V_{1,-1}(8m)}\ar@{-}[ul]\ar@{-}[u]&&&&&&&&&&&&&&&&&&&&&&&&&&&&&&&&&&&&&&&\\
&&&&&&&&&&&&&&&&&&&&&&&&&&&&&&&&&&&&&&&&&&&&&&&&&&&&&&&&&&&&&&&\\
{\scriptstyle V_{1,1}(4m)}\ar@{-}[uu]\ar@{-}[uurrrr]&&&&{\scriptstyle V_{-1,1}(4m)}\ar@{-}[uurrrrrrrr]&&&&&&&&{\scriptstyle W_{m}(4m)}\ar@{-}[uullll]\ar@{-}[uurrrr]&&&&&&&&{\scriptstyle V_{-1,-1}(4m)}\ar@{-}[uullllllll]&&&&{\scriptstyle V_{1,-1}(4m)}\ar@{-}[uu]\ar@{-}[uullll]&&&&&{\scriptstyle W_1(4m)}\ar@{-}[ul]\ar@{-}[ur]&&&&{\scriptstyle W_{-1+2m}(4m)}\ar@{-}[ul]\ar@{-}[ur]&&&&{\scriptstyle W_{-1+m}(4m)}\ar@{-}[ul]\ar@{-}[ur]&&&&{\scriptstyle W_{1+m}(4m)}\ar@{-}[ul]\ar@{-}[ur]&&&&&&&&\textcolor{gray}{{\scriptstyle W_{-1+4}(8)}}\ar@{-}@[gray][ur]\ar@{-}@[gray][ul]&&&&\textcolor{gray}{{\scriptstyle W_{1+4}(8)}}\ar@{-}@[gray][ur]\ar@{-}@[gray][ul]&&&&&&&&&&\\
&&&&&&&&&&&&&&&&&&&&&&&&&&&&&&&&&&&&&&&&&&&&&&&&&&&&&&&&&&&&&&&\\
{\scriptstyle V_{1,1}(2m)}\ar@{-}[uu]\ar@{-}[uurrrr]&&&&{\scriptstyle V_{-1,1}(2m)}\ar@{-}[uurrrrrrrr]&&&&&&&&&&&&&&&&{\scriptstyle V_{-1,-1}(2m)}\ar@{-}[uullllllll]&&&&{\scriptstyle V_{1,-1}(2m)}\ar@{-}[uu]\ar@{-}[uullll]&&&&&&&{\scriptstyle W_1(2m)}\ar@{-}[uull]\ar@{-}[uurr]&&&&&&&&{\scriptstyle W_{-1+m}(2m)}\ar@{-}[uull]\ar@{-}[uurr]&&&&&&&&&&&&\textcolor{gray}{{\scriptstyle W_1(4)}}\ar@{-}@[gray][uurr]\ar@{-}@[gray][uull]&&&&&&&&&&&&\\
&&&&&&&&&&&&&&&&&&&&&&&&&&&&&&&&&&&&&&&&&&&&&&&&&&&&&&&&&&&&&&&\\
{\scriptstyle V_{1,1}(m)}\ar@{-}[uu]\ar@{-}[uurrrr]&&&&&&&&&&&&&&&&&&&&&&&&{\scriptstyle V_{1,-1}(m)}\ar@{-}[uu]\ar@{-}[uullll]&&&&&&&&&&&{\scriptstyle W_1(m)}\ar@{-}[uullll]\ar@{-}[uurrrr]&&&& \dots&&&&{\scriptstyle W_{\frac{m-1}{2}}(m)}\ar@{-}[ul]\ar@{-}[ur]&&&&&&\textcolor{gray}{{\scriptstyle V_{-1,1}(2)}}\ar@{-}@[gray][uurr]&&&&\textcolor{gray}{{\scriptstyle V_{-1,-1}(2)}}\ar@{-}@[gray][uull]&&&&&&&&&&
}
\end{centering}
\caption{The induction/restriction diagram of all simple $D_{2mp^l}$-modules, where $p=2$, where $l\in\N$, and where $m$ is either equal to $p$ or not divisible by $p$. The diagram has $\frac{m+1}{2}$ connected components; the grayed out (rightmost) component is excluded in case $m\ne p$, and included in place of the $\frac{m-1}{2}$ components to its left in case $m=p$.} 
\end{sidewaysfigure}
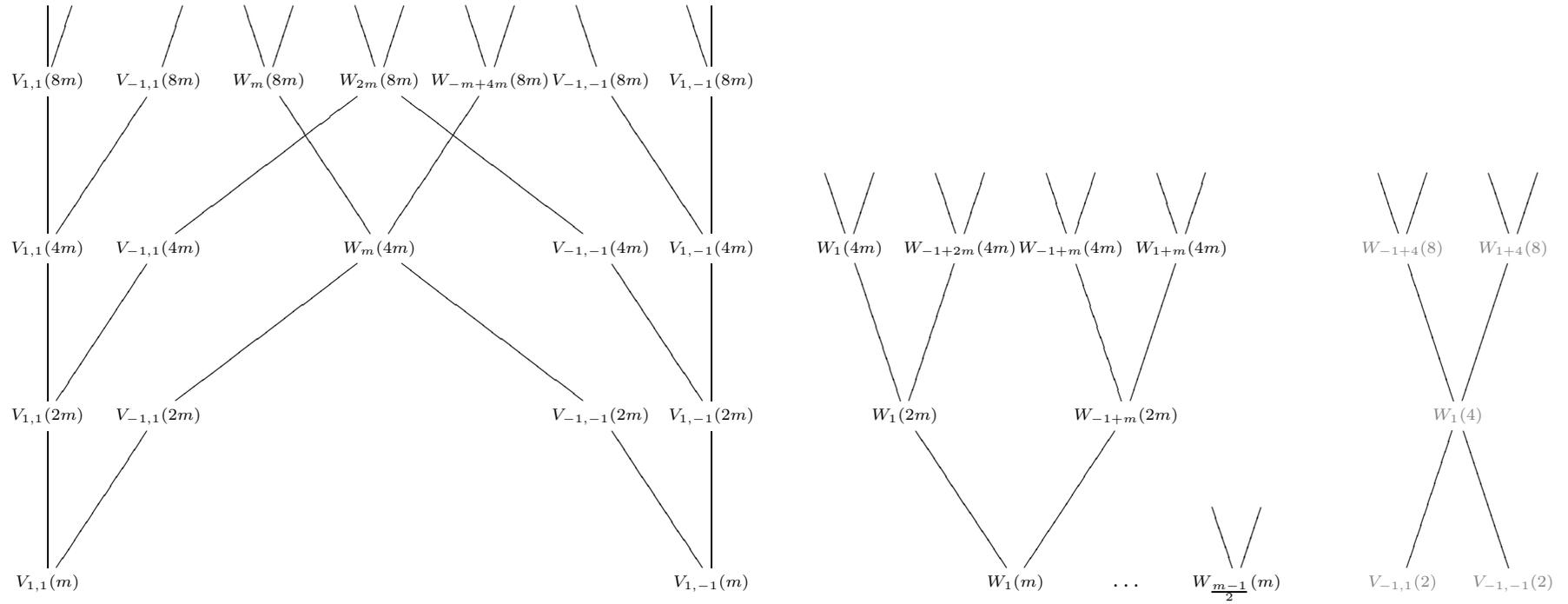

\subsection{Induction and restriction module structure on Grothendieck groups of dihedral group modules}
Define induction and restriction functors $\ind_p$ and $\res_p$ on $\bigoplus_{n\ge 3} D_{2n}\mmod$ by setting
\begin{equation*}
{\ind_p}_{|D_{2n}\mmod}=\ind_n^{pn}
\end{equation*}
and
\begin{equation*}
{\res_p}_{|D_{2n}\mmod}=\begin{cases}
       \res_{n/p}^n, & \mbox{if }p|n\text{ and }n\ne p,\\
        0, & \mbox{otherwise}.
        \end{cases},
\end{equation*}
and extending via additivity. These functors then also induce endomorphisms on the Grothendieck group 
\begin{equation*}
\groth[\bigoplus_{n\ge 3} D_{2n}\mmod],
\end{equation*}
where we note that the split Grothendieck group and the regular Grothendieck group coincide because of Maschke's Theorem. By further abuse of notation, the induced functors will also be denoted by $\res_p$ and $\ind_p$ respectively. 

\subsection{The algebras \texorpdfstring{$A_{P,\mathcal{M}}$}{APM}}
For $P$ being any set of primes, define $A_P$ to be the free algebra generated by the symbols $\res_p$ and $\ind_p$ with $p\in P$. By abuse of notation, let us sometimes omit the set brackets of singletons and also write $A_p=A_{\{p\}}$. 

Then the complexified Grothendieck group
\begin{equation*}
\mathcal{G}=\C\otimes_{\Z}\groth[\bigoplus_{n\ge 3} D_{2n}\mmod]
\end{equation*}
becomes an $A_P$-module with action induced by the actions of $\res_p$ and $\ind_p$ on the Grothendieck group. For any submodule $\mathcal{M}\subset \mathcal{G}$, let $\ann_{A_P}(\mathcal{M})$ be the ideal of elements of $A_P$ that annihilate each element of $\mathcal{M}$, and let
\begin{equation*}
A_{P,\mathcal{M}}=A_{P}/\ann_{A_P}(\mathcal{M}).
\end{equation*} 
The action of $A_P$ on $\mathcal{M}$ induces an action of $A_{P,\mathcal{M}}$ on $\mathcal{M}$ in the obvious way.

In what follows we will study the algebra $A_{P,\mathcal{M}}$ as well as $\ann_{A_P}(\mathcal{M})$, the latter being the kernel of the natural surjection
\begin{equation*}
\varphi_{P,\mathcal{M}}:A_P\rightarrow A_{p,\mathcal{M}},
\end{equation*}
We observe that for any $z_1,z_2\in A_P$ we have that $\varphi_{P,\mathcal{M}}(z_1)=\varphi_{P,\mathcal{M}}(z_2)$ if and only if $z_1M=z_2M$ for all $M\in\mathcal{M}$, a fact that will be used extensively in the proofs to follow. 

By further abuse of notation, we will often let $\res_p$ and $\ind_p$ denote also the images $\varphi_{P,\mathcal{M}}(\res_p)$ and $\varphi_{P,\mathcal{M}}(\ind_p)$ respectively when no confusion should occur. 

The following lemma is obvious.
\begin{mylem}
\label{wloglem}
Let $P$ be some set of primes, let $\mathcal{N}\subset\mathcal{M}\subset\mathcal{G}$ be $A_P$-submodules and let $z\in\ker(\varphi_{P,\mathcal{M}})$. Then also $z\in\ker(\varphi_{P,\mathcal{N}})$.
\end{mylem}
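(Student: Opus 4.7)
The statement is essentially a tautology about annihilators: since $\mathcal{N}\subseteq\mathcal{M}$, anything that annihilates all of $\mathcal{M}$ must in particular annihilate all of $\mathcal{N}$. So the plan is simply to unwind the definitions and use the characterization of $\ker(\varphi_{P,\mathcal{M}})$ given immediately before the lemma.

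Concretely, I would start from an element $z\in\ker(\varphi_{P,\mathcal{M}})$. By definition $\varphi_{P,\mathcal{M}}$ is the quotient map $A_P\to A_P/\ann_{A_P}(\mathcal{M})$, so $z\in\ann_{A_P}(\mathcal{M})$; equivalently (using the observation recorded just before the statement of the lemma, applied with $z_1=z$ and $z_2=0$), one has $zM=0$ for every $M\in\mathcal{M}$. Since $\mathcal{N}\subset\mathcal{M}$, every $N\in\mathcal{N}$ is also an element of $\mathcal{M}$, hence $zN=0$ for every $N\in\mathcal{N}$. Applying the same observation in the reverse direction, with $\mathcal{N}$ in place of $\mathcal{M}$, this says precisely that $z\in\ann_{A_P}(\mathcal{N})=\ker(\varphi_{P,\mathcal{N}})$, which is what we needed.

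There is no real obstacle here, and no calculation to perform; the lemma is just a monotonicity property of the construction $\mathcal{M}\mapsto\ann_{A_P}(\mathcal{M})$ with respect to inclusion of $A_P$-submodules of $\mathcal{G}$. The only point worth being slightly careful about is the direction of the induced map: the annihilator gets \emph{larger} when the module shrinks, so the kernel of $\varphi_{P,\mathcal{N}}$ contains the kernel of $\varphi_{P,\mathcal{M}}$, not the other way around — which matches the direction of the implication stated in the lemma.
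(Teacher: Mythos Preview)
Your proof is correct and matches the paper's treatment: the paper simply declares the lemma ``obvious'' and gives no further argument, which is exactly the tautology about annihilators you spelled out.
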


It is clear that the representation of $A_P$ by $\mathcal{M}$ factors through $A_{P,\mathcal{M}}$ via a natural surjection and that $A_{P,\mathcal{M}}$ is terminal with this property. The following proposition offers another way to think about $A_{P,\mathcal{M}}$. 
\begin{myprop}
The $A_P$-module $A_{P,\mathcal{M}}$ satisfies that for any nonzero $a\in A_{P,\mathcal{M}}$ there exists a homomorphism of $A_P$-modules
\begin{equation*}
A_{P,\mathcal{M}}\rightarrow \mathcal{M}
\end{equation*}
which does not annihilate $a$, and $A_{P,\mathcal{M}}$ is the unique maximal quotient of the regular $A_{P}$-module that satisfies this property.  
\end{myprop}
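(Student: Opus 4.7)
The plan is to exploit the fact that $A_{P,\mathcal{M}}$ is a cyclic $A_P$-module (being a quotient of the regular module), so that any $A_P$-module homomorphism out of it is determined by the image of the class of $1$. Since $\ann_{A_P}(\mathcal{M}) = \bigcap_{m\in\mathcal{M}} \ann_{A_P}(m)$, for every $m\in\mathcal{M}$ the map $\varphi_{P,\mathcal{M}}(z)\mapsto z\cdot m$ is well defined, giving a natural bijection between $\Hom_{A_P}(A_{P,\mathcal{M}},\mathcal{M})$ and $\mathcal{M}$.

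For the first claim, I would take a nonzero $a\in A_{P,\mathcal{M}}$ and lift it to some $\tilde{a}\in A_P$. Since $a\neq 0$, we have $\tilde{a}\notin\ann_{A_P}(\mathcal{M})$, so by definition of the annihilator there exists $m\in\mathcal{M}$ with $\tilde{a}\cdot m\neq 0$. The homomorphism $f_m:A_{P,\mathcal{M}}\to\mathcal{M}$ sending $1\mapsto m$ then satisfies $f_m(a)=\tilde{a}\cdot m\neq 0$, as desired.

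For the maximality (and uniqueness) of $A_{P,\mathcal{M}}$ as such a quotient, I would let $Q=A_P/I$ be any quotient of the regular $A_P$-module with the stated separation property, and show $\ann_{A_P}(\mathcal{M})\subseteq I$. Indeed, suppose for contradiction that some $z\in\ann_{A_P}(\mathcal{M})$ maps to a nonzero element $\bar{z}\in Q$. By the property of $Q$, there is $g:Q\to\mathcal{M}$ with $g(\bar{z})\neq 0$. But since $g$ is $A_P$-linear and $Q$ is cyclic with generator $\bar{1}$, we have $g(\bar{z})=z\cdot g(\bar{1})$, and $g(\bar{1})\in\mathcal{M}$ is killed by $z$, contradiction. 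Hence $I\supseteq\ann_{A_P}(\mathcal{M})$, so $Q$ factors through $A_{P,\mathcal{M}}$. Combined with the first part (which shows $A_{P,\mathcal{M}}$ itself has the property), this gives that $A_{P,\mathcal{M}}$ is the unique maximal such quotient.

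There is no real obstacle here; this is essentially the universal property of annihilator quotients together with the cyclic structure of $A_{P,\mathcal{M}}$. The only thing to be careful about is not to confuse homomorphisms of $A_P$-modules with homomorphisms of $A_{P,\mathcal{M}}$-modules --- but since $\mathcal{M}$ is naturally an $A_{P,\mathcal{M}}$-module these notions coincide, so this causes no trouble.
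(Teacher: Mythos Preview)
Your proof is correct and follows essentially the same approach as the paper: both arguments rest on the observation that, since $A_{P,\mathcal{M}}$ is cyclic, $A_P$-homomorphisms $A_{P,\mathcal{M}}\to\mathcal{M}$ are in bijection with elements of $\mathcal{M}$ via $1\mapsto m$, and then the characterization of $\ker\varphi_{P,\mathcal{M}}=\ann_{A_P}(\mathcal{M})$ does all the work. Your treatment of the maximality claim is in fact more explicit than the paper's, which leaves that step largely implicit.
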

\begin{proof}
The homomorphisms $A_{P,\mathcal{M}}\rightarrow \mathcal{M}$ correspond precisely to the mappings $1\mapsto M\in\mathcal{M}$ by the extension $a\mapsto aM$ for all $a\in A_{P,\mathcal{M}}$. The quotient $A_{P,\mathcal{M}}$ of $A_P$ is taken by precisely the set of $a\in A_P$ for which $aM=0$ for every $M\in\mathcal{M}$, and these $a$ are those for which there can not exist a homomorphism of $A_P$-modules $A_{P,\mathcal{M}}\rightarrow \mathcal{M}$
which does not annihilate $a$.
\end{proof}

\subsection{Termini and nadirs}
We conclude the present section by introducing certain features of monomials in $A_P$, which will be very useful to consider in coming proofs. Throughout this subsection, let $z\in A_P$ be a (non-commutative) nonzero monomial, i.e. up to a scalar a sequence of various $\res_q$ and $\ind_q$ symbols, where $q$ ranges over $P$, and let $p\in P$ be fixed. 

We call the total number of $\ind_p$ in $z$ minus the total number of $\res_p$ in $z$ the \emph{terminus} of $z$ with respect to $p$. Such a terminus will most often be denoted by $e_p$.

We call a terminal subsequence (i.e. a right monomial factor) $z'$ of $z$ a \emph{nadir in} $z$ with respect to $p$ if the number of $\ind_p$ minus the number of $\res_p$ in $z'$ is minimal over all terminal subsequences of $z$. This number will be called the \emph{nadir of} $z$ with respect to $p$, and will most often be denoted by $d_p$. Note that the word nadir will thus be used in two different (albeit related) ways distinguished by the choice of preposition. In particular, the nadir of a sequence with respect to a fixed prime number is unique, while a nadir in the subsequence with respect to that prime is not necessarily unique. If $z'$ is a nadir in $z$ with respect to all $p\in P$ simultaneously, then we call $z'$ a \emph{total nadir} in $z$.

The following lemma is a first example of results which rely on these concepts.

\begin{mylem}
\label{nadirlem}
Let $P$ be any set of primes, and let $z\in A_P$ be a monomial. For $p\in P$, let $d_p$ be the nadir of $z$ with respect to $p$.
\begin{enumerate}
\item[$($i$)$]
If there is no total nadir in $z$, then, for every simple $D_{2n}$-module $L$, we have $zL=0$ if and only if $p^{-d_p}\not|n$ for some $p\in P$.
\item[$($ii$)$]
If there is a a total nadir in $z$, then, for any simple $D_{2n}$-module $L$, we have that $zL=0$ if and only if $p^{-d_p}\not|n$ for some $p\in P$ or $n=2\prod_{p\in P}p^{-d_p}$. 
\end{enumerate}
\end{mylem}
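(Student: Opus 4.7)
My plan is to trace the action of the monomial $z = z_k z_{k-1} \cdots z_1 \in A_P$ on $L$ operator by operator. Let $n_i$ be the index of the dihedral group whose module category contains $z_i \cdots z_1 \cdot L$ (when this is nonzero), and let $e_p^{(i)} = \#\ind_p - \#\res_p$ count inductions minus restrictions in the terminal subsequence $z_i \cdots z_1$. As long as no intermediate vanishing occurs, $n_i = n \prod_{p \in P} p^{e_p^{(i)}}$, and $d_p = \min_i e_p^{(i)}$; a total nadir is exactly an index $i^*$ at which $e_p^{(i^*)} = d_p$ for every $p \in P$ simultaneously.

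First, I will reduce $zL = 0$ to a combinatorial condition on this trajectory. By Proposition \ref{resindprop} together with the conventions that $V_{a,b}(n) = W_k(n) = 0$ for $n < 3$, induction preserves nonvanishing of simple modules, whereas $\res_p$ applied to a $D_{2m}$-simple vanishes exactly when $p \nmid m$ or the target index $m/p$ drops below $3$. Consequently $zL = 0$ if and only if at some step $i$ with $z_i = \res_p$, either $p \nmid n_{i-1}$ or $n_i \in \{1,2\}$.

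Second, I recast the divisibility failure in terms of the nadir. Since $v_p(n_{i-1}) = v_p(n) + e_p^{(i-1)}$, the condition $p \nmid n_{i-1}$ at some $\res_p$-step is $v_p(n) < -e_p^{(i-1)}$; ranging over $\res_p$-steps, the tightest such inequality reduces to $v_p(n) < -d_p$, since $d_p$ is attained immediately after some $\res_p$-step (or is $0$, in which case the condition is vacuous). Hence the divisibility failures aggregated over all primes are avoided exactly when $p^{-d_p} \mid n$ for every $p \in P$, giving one half of both clauses.

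Third, assuming $p^{-d_p}\mid n$ for every $p$, I handle the "too small'' failure $n_i \in \{1,2\}$. Because $e_p^{(i)} \ge d_p$ always, $n_i \ge n/\prod_p p^{-d_p}$, with equality iff $z_i\cdots z_1$ is a total nadir. So if no total nadir exists, at every index some prime overshoots its nadir by at least one, which multiplies $n_i$ by at least that prime and forces $n_i \ge 3$; this yields (i). If a total nadir does exist at some index $i^*$, then $n_{i^*} = n/\prod_p p^{-d_p}$ is attained, the condition $n_{i^*}\in\{1,2\}$ singles out the exceptional value(s) of $n$ from the statement, and all other $i\ne i^*$ satisfy $n_i > n_{i^*}$ by the same overshoot bound, so only the total-nadir step is critical; this yields (ii).

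The main obstacle will be the bound in step three: ruling out $n_i < 3$ in the no-total-nadir regime and pinpointing precisely the exceptional $n$-value(s) at the total nadir. One must track by how much the overshooting prime(s) inflate $n_i$, handle sub-cases according to whether $2 \in P$ and according to which prime achieves the near-nadir configuration, and carefully compare the resulting products with $3$ under the standing assumption $p^{-d_p} \mid n$.
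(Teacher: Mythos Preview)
Your approach matches the paper's: reduce $zL=0$ to the event that some $\res_p$ is applied at a stage where it must vanish, using that the structure constants in Proposition~\ref{resindprop} are nonnegative so no cancellation can occur, and then translate this event into the nadir conditions. The paper's proof is a two-line sketch stopping at this reduction; you carry out the remaining bookkeeping explicitly.

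One point to flag: your total-nadir analysis lands on $n_{i^*}\in\{1,2\}$, i.e.\ $n\in\{\prod_p p^{-d_p},\,2\prod_p p^{-d_p}\}$, which does not literally match the single exceptional value $n=2\prod_p p^{-d_p}$ in~(ii). The paper's own proof only records the failure mode $m=p$ (so $n_{i^*}=1$), and the one place the exceptional clause is actually used (the proof of Lemma~\ref{deplem}(ii)) invokes it with $n=\prod_p p^{-d_p}$; so the ``$2$'' in the statement appears to be a slip. In the odd-prime, odd-$n$ setting of Section~\ref{s5} the value $n_{i^*}=2$ cannot occur anyway, so your extra care about the sub-cases $2\in P$ and $n_i=2$ goes beyond what the paper needs or establishes.
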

\begin{proof}
For $z$ of degree 1, the statement of the Lemma is clear from the definitions of the actions of $\res_p$ and $\ind_p$ on simple $D_{2n}$-modules. From Proposition \ref{resindprop} we see that the structure constants of these actions are all nonnegative, so $zL=0$ if and only if at some point in computing $zL$, a $\res_p$ is applied to $D_{2m}$-modules with $p\not|m$ or $m=p$. The result follows. 
\end{proof}

\section{Relations between restrictions and inductions between modules over dihedral groups of general order}\label{s4}
We may say a few things about the relations in general algebras $A_{P,\mathcal{M}}$. 

\begin{myprop}
\label{relsprop}
For any set of primes $P$, any $A_P$-submodule $\mathcal{M}\subset\mathcal{G}$, and any $p,q\in P$, the following equalities hold. 
\begin{enumerate}
\item[$($i$)$]
$\varphi_{P,\mathcal{M}}(\res_p\res_q)=\varphi_{P,\mathcal{M}}(\res_q\res_p)$.
\item[$($ii$)$]
$\varphi_{P,\mathcal{M}}(\ind_p\ind_q)=\varphi_{P,\mathcal{M}}(\ind_q\ind_p)$. 

\end{enumerate}
\end{myprop}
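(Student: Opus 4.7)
The plan is to prove both equalities by lifting them to natural isomorphisms of the corresponding compositions of functors on $\bigoplus_{n\ge 3} D_{2n}\mmod$. Once this is in place, the equalities in $A_{P,\mathcal M}$ follow immediately, since naturally isomorphic exact functors act identically on the Grothendieck group and hence have the same image under $\varphi_{P,\mathcal M}$.

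For part (ii), I would apply associativity of tensor products: for any $D_{2n}$-module $M$,
\begin{equation*}
\ind_p\ind_q M=\C[D_{2pqn}]\otimes_{\C[D_{2qn}]}\bigl(\C[D_{2qn}]\otimes_{\C[D_{2n}]}M\bigr)\cong\C[D_{2pqn}]\otimes_{\C[D_{2n}]}M,
\end{equation*}
and symmetrically $\ind_q\ind_p M\cong\C[D_{2pqn}]\otimes_{\C[D_{2n}]}M$. The fact that both compositions yield the same module is precisely the statement that the two inclusion chains $D_{2n}\hookrightarrow D_{2qn}\hookrightarrow D_{2pqn}$ and $D_{2n}\hookrightarrow D_{2pn}\hookrightarrow D_{2pqn}$ compose to the same natural inclusion $D_{2n}\hookrightarrow D_{2pqn}$ from \eqref{incleq}. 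For part (i), the analogous argument uses transitivity of restriction of scalars: whenever both compositions land in a nonzero summand of $\bigoplus_{n\ge 3} D_{2n}\mmod$, each of $\res_p\res_q M$ and $\res_q\res_p M$ identifies with the restriction of $M$ along the same composite inclusion $D_{2n/(pq)}\hookrightarrow D_{2n}$, and so the two agree.

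The main obstacle I expect is boundary bookkeeping in part (i): since $\res_p$ is defined to vanish on those $D_{2n}\mmod$ for which the naive restriction would exit $\bigoplus_{n\ge 3}D_{2n}\mmod$, I have to confirm that $\res_p\res_q$ and $\res_q\res_p$ vanish on exactly the same collection of summands. A short case analysis reduces this to a condition on $n$, $p$, $q$ that is manifestly symmetric in $p$ and $q$, so the supports coincide; apart from this combinatorial check, the proof is just the standard transitivity of induction and restriction along a chain of subgroup inclusions.
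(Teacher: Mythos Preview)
Your proposal is correct and follows essentially the same approach as the paper: both arguments rest on the fact that the two chains of inclusions $D_{2n}\hookrightarrow D_{2pn}\hookrightarrow D_{2pqn}$ and $D_{2n}\hookrightarrow D_{2qn}\hookrightarrow D_{2pqn}$ compose to the same map, so the composite restrictions (and inductions) agree. The only minor difference is that for part~(ii) the paper simply invokes Frobenius reciprocity to deduce it from part~(i), whereas you give the direct tensor-product argument; both are standard and equally short. Your explicit handling of the boundary vanishing in part~(i) is slightly more careful than the paper's one-line dismissal, but the content is the same.
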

\begin{proof}
{\bf Part (i):} In cases where $\res_p$ or $\res_q$ acts by 0, the statement is clear. Assume therefore that this is not the case. The compositions $D_{2n}\hookrightarrow D_{2pn}\hookrightarrow D_{2pqn}$ and $D_{2n}\hookrightarrow D_{2qn}\hookrightarrow D_{2pqn}$ of inclusions as in \eqref{incleq} are the same, hence for any $D_{2pqn}$-module $M$ we have
\begin{equation*}
(M_{|D_{2pn}})_{|D_{2n}}\cong (M_{|D_{2qn}})_{|D_{2n}}
\end{equation*}
so that $\res_p\res_qM=\res_q\res_pM$. Hence $\res_p\res_q-\res_q\res_p$ lies in the kernel of $\varphi_{P,\mathcal{M}}$.  

{\bf Part(ii):} This follows by Frobenius reciprocity. 
\end{proof}

\begin{myprop}
\label{somecommprop}
For any distinct primes $p$ and $q$ and any simple $D_{2pn}$-module $L\in\mathcal{G}$ where $n>1$ (equivalently any simple $D_{2m}$-module such that $\res_pL\ne 0)$, we have that
\begin{equation*}
\res_p\ind_qL=\ind_q\res_pL.
\end{equation*}
\end{myprop}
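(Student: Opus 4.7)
The plan is to derive the identity from the Mackey decomposition theorem applied to the pair of subgroups $D_{2pn}$ and $D_{2qn}$ of $D_{2pqn}$. Iterating the inclusion \eqref{incleq}, I would realize $D_{2pn}$ inside $D_{2pqn}$ as $\langle r_{pqn}^{q},s_{pqn}\rangle$ and $D_{2qn}$ inside $D_{2pqn}$ as $\langle r_{pqn}^{p},s_{pqn}\rangle$, and check that the two composed inclusions $D_{2n}\hookrightarrow D_{2pn}\hookrightarrow D_{2pqn}$ and $D_{2n}\hookrightarrow D_{2qn}\hookrightarrow D_{2pqn}$ coincide (both send $r_{n}$ to $r_{pqn}^{pq}$ and $s_{n}$ to $s_{pqn}$), in the spirit of the proof of Proposition \ref{relsprop}(i).

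The technical heart of the argument is to establish, as subsets of $D_{2pqn}$, the identities
\begin{equation*}
D_{2pn}\cap D_{2qn}=D_{2n}\qquad\text{and}\qquad D_{2pn}\cdot D_{2qn}=D_{2pqn}.
\end{equation*}
The intersection identity uses that if an element can be written simultaneously as $r_{pqn}^{qi}s_{pqn}^{j}$ and $r_{pqn}^{pi'}s_{pqn}^{j'}$, then (since $\gcd(p,q)=1$) its rotation exponent must be divisible by $pq$, so the element lies in $\langle r_{pqn}^{pq},s_{pqn}\rangle$, which is precisely the image of $D_{2n}$. The product identity follows from Bezout: the rotations $r_{pqn}^{p}$ and $r_{pqn}^{q}$ together generate $r_{pqn}^{\gcd(p,q)}=r_{pqn}$, and $s_{pqn}$ already lies in both subgroups.

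These two facts together say that $D_{2qn}\backslash D_{2pqn}/D_{2pn}$ consists of a single double coset (represented by the identity), so the Mackey formula collapses to one summand and yields
\begin{equation*}
\res_{D_{2qn}}^{D_{2pqn}}\ind_{D_{2pn}}^{D_{2pqn}}L\cong \ind_{D_{2n}}^{D_{2qn}}\res_{D_{2n}}^{D_{2pn}}L,
\end{equation*}
which is precisely $\res_{p}\ind_{q}L\cong \ind_{q}\res_{p}L$, and therefore gives the claimed equality in $\mathcal{G}$. The hypothesis $n>1$ is needed exactly so that $\res_{p}L$ is not forced to zero by the convention for $n=p$ in the definition of $\res_{p}$, i.e.\ so that the right-hand side is computed by a genuine restriction of groups.

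The main obstacle will be bookkeeping: making sure that the various identifications of $D_{2n}$, $D_{2pn}$, and $D_{2qn}$ as subgroups of $D_{2pqn}$ are internally consistent, so that Mackey can be cleanly invoked. As a fallback that avoids Mackey altogether, one could compute both sides case by case on the simple modules $V_{a,b}(pn)$ and $W_{k}(pn)$ using Proposition \ref{resindprop} and match up the summands, but this is considerably more tedious than the group-theoretic argument above.
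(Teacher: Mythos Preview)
Your argument is correct and in substance coincides with the paper's, though the packaging differs. The paper does not invoke Mackey by name; instead it shows directly that the multiplication map of $\C[D_{2qn}]$--$\C[D_{2pn}]$-bimodules
\[
\C[D_{2qn}]\otimes_{\C[D_{2n}]}\C[D_{2pn}]\longrightarrow\C[D_{2pqn}],\qquad x\otimes y\mapsto xy,
\]
is an isomorphism, from which $\ind_q\res_p L\cong\res_p\ind_q L$ follows by tensoring with $L$. Surjectivity uses exactly your Bezout observation that $r_{pqn}^{p}$ and $r_{pqn}^{q}$ together produce $r_{pqn}$, and injectivity is handled by a dimension count ($2qn\cdot 2pn/(2n)=2pqn$) rather than your intersection identity $D_{2pn}\cap D_{2qn}=D_{2n}$. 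Your route via Mackey is the standard conceptual wrapper for precisely this single-double-coset computation; it is arguably cleaner to cite, while the paper's bimodule argument is self-contained and avoids appealing to an external theorem.
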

\begin{proof}
We have that
\begin{align*}
\ind_q\res_p L&\cong \C[D_{2qn}]\otimes_{\C[D_{2n}]}\C[D_{2n}]\otimes_{\C[D_{2n}]}\C[D_{2pn}]\otimes_{\C[D_{2pn}]}L\\&\cong \C[D_{2qn}]\otimes_{\C[D_{2n}]}\C[D_{2pn}]\otimes_{\C[D_{2pn}]}L
\end{align*}
and
\begin{align*}
\res_p\ind_qL&\cong \C[D_{2qn}]\otimes_{\C[D_{2qn}]}\C[D_{2pqn}]\otimes_{\C[D_{2pn}]}\C[D_{2pn}]\otimes_{\C[D_{2pn}]}L\\&\cong \C[D_{2pqn}]\otimes_{\C[D_{2pn}]}L.
\end{align*}
It then suffices to show that the homomorphism of $\C[D_{2qn}]$-$\C[D_{2pn}]$-bimodules
\begin{align*}
f:\C[D_{2qn}]\otimes_{\C[D_{2n}]}\C[D_{2pn}]&\rightarrow \C[D_{2pqn}]\\x\otimes_{\C[D_{2n}]}y&\mapsto xy
\end{align*}
induced by the inclusion \eqref{incleq} is in fact an isomorphism. It is clear from \eqref{incleq} that $f(1\otimes_{\C[D_{2n}]}s_{pn})=s_{pqn}$, that  $f(1\otimes_{\C[D_{2n}]}r_{pn})=r_{pqn}^q$, and that $f(r_{qn}\otimes_{\C[D_{2n}]} 1)=r_{pqn}^p$. Since $p$ and $q$ are distinct primes, the diophantine equation $pu+qv=1$ is solvable in $u$ and $v$, so that we have $f(r_{qn}^u\otimes_{\C[D_{2n}]}r_{pn}^v)=r_{pqn}^{pu+qv}=r_{pqn}$. Since $s_{pqn}$ and $r_{pqn}$ generate $D_{2pqn}$, we get that $f$ is surjective. The module $\C[D_{2qn}]\otimes_{\C[D_{2n}]}\C[D_{2pn}]$ has dimension $2qn\cdot 2pn/(2n)=2pqn$, which is also the dimension of $\C[D_{2pqn}]$. Hence $f$ must indeed be an isomorphism. 
\end{proof}

\begin{comment}
\begin{myprop}
\label{rels2prop}
Let $p$ be a prime number and let $M$ be a simple $D_{2pn}$-module. Then the following hold.  
\begin{enumerate}
\item[$($i$)$]
$\res_p\circ\res_p\circ\ind_p M\cong \res_p\circ\ind_p\circ\res_p M \text{ if $(p,M)\ne(2,V_{-1,b}(pn))$}$.
\item[$($ii$)$]
$\res_p\circ\ind_p\circ\ind_p M\cong \ind_p\circ\res_p\circ\ind_p M \text{ if $(p,M)\ne(2,V_{1,b}(pn))$}$.
\end{enumerate}
\end{myprop}
\begin{proof}
\mbox{}
\begin{enumerate}
\item[$($i$)$]
For $p\ne 2$, the result is immediate from applying $\res_p$ to the expressions for $\res_p\circ\ind_p$ and $\ind_p\circ\res_p$ in the proof of Proposition \ref{relsprop}. [The $p=2$ part is done by direct computation, and is so far omitted in this preprint.]
\item[$($ii$)$]
For $p\ne 2$, the result follows from the previous part and Frobenius reciprocity. 
\end{enumerate}
\end{proof}
\end{comment}

\section{Results for the algebras \texorpdfstring{$A_{P,\mathcal{M}}$}{APM} for modules over dihedral groups of order not divisible by 4}\label{s5}
Throughout this section, $P$ will be a set of odd prime numbers, and $\mathcal{M}\subset\mathcal{G}$ will be an $A_P$-submodule spanned by simple modules over dihedral groups $D_{2n}$ with $n$ odd. For the main results we will furthermore require that for each $n$, either all simple $D_{2n}$-modules or none lie in $\mathcal{M}$. This latter condition means that $\mathcal{M}$ will consists of entire induction/restriction diagrams as in Figure 1, rather than merely some connected components. Allowing for $\mathcal{M}$ which do not satisfy this condition would give rise to an unwieldy amount of additional cases, although it seems that these too should in principle be amenable to the methods used in the paper. 

The main objective of the present section is to find a basis and a generating set of relations for the algebra $A_{P,\mathcal{M}}$. We will need to consider two cases, depending on whether $\mathcal{M}$ contains a $D_{2n}$-module with all prime factors of $n$ lying in $P$ or not. These cases will be developed in parallel, and culminate in the theorems \ref{babybasisthm} and \ref{basisthm} respectively.

\subsection{A translation of the induction/restriction diagrams}

The Lemma \ref{partfunlem} to follow may seem quite technical, but formalizes something which is fairly easy to corroborate on an intuitive level by looking at the induction/restriction diagram in Figure 1. It morally says the following: Pick any vertex (i.e. simple module) and consider the subdiagram formed by adding all vertices which are both connected to the starting one and also lie at the same level as or higher than the starting one. Then this subdiagram is isomorphic to one of the connected components of entire induction/restriction diagram. It may nevertheless be preferable to skip ahead at this point and refer back to the lemma and its proof when it is used in Proposition \ref{nadirprop} and Lemma \ref{deplem}. The examples \ref{nadirex} and \ref{depex} illustrate the latter results and should shed further light also on the ideas behind Lemma \ref{partfunlem}. 

The following notation for the two-dimensional simple modules over the dihedral groups will prove convenient in the statement of the lemma. Let
\begin{equation*}
\mathcal{I}=\lb 1,p\rb^*
\end{equation*}
be the free monoid generated by $\lb 1,p\rb$. Let $n\ge 3$ be an odd integer, let $k\in \Z$, and set $W_{k}^{()}(n)=W_{k}(n)$. For $I\in\mathcal{I}$, let $\len(I)$ denote the length of the word $I$. For each odd prime $p$, define inductively $W_{k}^I(n)$ for all $I\in\mathcal{I}$ by considering the set
\begin{equation*}
K_{p,n,I}=\{ k'\in\N: 0<k'<\frac{np^{\len(I)+1}}{2}, \res_p(W_{k'}(np^{\len(I)+1}))=W_{k}^I(n)\},
\end{equation*}
by letting $k_{p,n,I,j}$ be the $j$:th smallest element in $K_{p,n,I}$ for each $j\in\lb 1,p\rb$ (this choice of ordering is not essential), and finally defining
\begin{equation*}
W_{k}^{I(j)}(n)=W_{k_{p,n,I,j}}(np^{\len(I)+1}).     
\end{equation*}
\begin{mylem}
\label{partfunlem}
Let $P$ be some set of odd primes, let $p\in P$, and let $\mathcal{M}\subset\mathcal{G}$ be some $A_P$-submodule spanned by simple $D_{n}$-modules, for $n$ odd, and such that for a fixed $n$ either every simple $D_{2n}$-module or none lies in $\mathcal{M}$. Let $J$ range over $\N$. 

Define linear partial functions (i.e. linear maps each defined only on some subspace of its domain) $\Phi_{p,m,J}$ by linearly extending
\begin{align*}
\Phi_{p,m,J}:\mathcal{M}&\rightarrow\mathcal{M}\\
V_{a,b}(mp^{J'})&\mapsto V_{a,b}(mp^{J'+J})\\
W_{k}(mp^{J'})&\mapsto W_{kp^{J}}(mp^{J'+J}),
\end{align*}
for all $k$ divisible by $p$, for all $J'\in \N$ and for all odd $m$ with either $m=p$ or $p\not|m$ such that the modules lie in $\mathcal{M}$. 

Also define, for all $k,k'\in\Z$ and odd $m$ satisfying that $m=p$ or $p\not|m$ and $k\in\lb 1,\frac{m-1}{2}\rb$, linear partial functions $\Psi_{k,m,k',J}$ by linearly extending
\begin{align*}
\Psi_{p,k,m,k',J}:\mathcal{M}&\rightarrow\mathcal{M}\\
W_k^I(m)&\mapsto W_{k'}^I(mp^J)
\end{align*}
for all $I\in\mathcal{I}$. 

The following statements then hold.
\begin{enumerate}
\item[$($i$)$]
For any $D_{2mp^l}$-module $M$ and any $J\in\N$, there is a $\Gamma\in\{\Phi_{p,m,J},\Psi_{p,k,m,k',J}:k,k'\in\Z\}$ such that $M\in\dom(\Gamma)$. 

\item[$($ii$)$]
The domains $\dom(\Phi_{p,m,J})$ and $\dom(\Psi_{p,k,m,k',J})$ are closed under the action of $A_{P,\mathcal{M}}$.

\item[$($iii$)$]
There exist partial linear functions
\begin{equation*}
\Phi_{p,m,J}^{-1}:\mathcal{M}\rightarrow\mathcal{M}
\end{equation*}
and
\begin{equation*}
\Psi_{p,k,m,k',J}^{-1}:\mathcal{M}\rightarrow\mathcal{M}
\end{equation*}
which are partial inverses to $\Phi_{p,m,J}$ and $\Psi_{p,k,m,k',J}$ respectively, i.e. $\Phi_{p,m,J}^{-1}\circ \Phi_{p,m,J}=\id_{\dom(\Phi_{p,m,J})}$, $\Phi_{p,m,J}\circ \Phi_{p,m,J}^{-1}=\id_{\dom(\Phi^{-1}_{p,m,J})}$, $\Psi_{p,k,m,k',J}^{-1}\circ \Psi_{p,k,m,k',J}=\id_{\dom(\Psi_{p,k,m,k',J})}$, and $\Psi_{p,k,m,k',J}\circ \Psi_{p,k,m,k',J}^{-1}=\id_{\dom(\Psi^{-1}_{p,k,m,k',J})}$. 

\item[$($iv$)$]
Let $M\in\mathcal{M}$ be a simple module and $z\in A_{P,\mathcal{M}}$ be a monomial such that $zM\ne 0$.

If $M\in\dom(\Phi_{p,m,J})$, we have
\begin{equation*}
\Phi_{p,m,J}zM=z\Phi_{p,m,J}M.
\end{equation*}

If $M\in\dom(\Psi_{p,k,m,k',J})$, we have
\begin{equation*}
\Psi_{p,k,m,k',J}zM=z\Psi_{p,k,m,k',J}M.
\end{equation*}

\end{enumerate}
\end{mylem}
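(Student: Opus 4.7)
The plan is to prove the four parts largely by direct case analysis using the explicit module classification and Proposition \ref{resindprop}, with parts (ii) and (iv) handled together by induction on the length of $z$. For part (i), every simple $D_{2mp^l}$-module is either $V_{a,b}(mp^l)$ or $W_k(mp^l)$. Writing the order uniquely as $m'p^{l'}$ with $m'=p$ or $p\nmid m'$, the $V$-type modules and the $W_k$-type ones with $p\mid k$ fall into $\dom(\Phi_{p,m',0})$ (with $J'=l'$). For $M=W_k(m'p^{l'})$ with $p\nmid k$ one traces the descent by applying $\res_p$ repeatedly; recording the index $j\in\lb 1,p\rb$ needed at each step to recover the module from its restriction identifies $M=W_{k_0}^I(m_0)$ for the appropriate root $W_{k_0}(m_0)$ and word $I\in\mathcal{I}$, placing $M$ in $\dom(\Psi_{p,k_0,m_0,k_0,0})$. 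Part (iii) is then essentially formal: the inverses send $V_{a,b}(mp^{J'+J})\mapsto V_{a,b}(mp^{J'})$, $W_{k'}(mp^{J'+J})\mapsto W_{k'/p^J}(mp^{J'})$ (defined when $p^{J+1}\mid k'$), and $W_{k'}^I(mp^J)\mapsto W_k^I(m)$, and these plainly invert $\Phi$ and $\Psi$ on their respective images.

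Parts (ii) and (iv) I would prove simultaneously by induction on $\len(z)$. The base $z=1$ is clear. For the inductive step, write $z=gz'$ where $g\in\{\res_q,\ind_q:q\in P\}$ is a single generator; by the inductive hypothesis the claims hold for $z'$, so by linearity the problem reduces to the length-one check: for each simple $M$ in the relevant domain, either $gM=0$ and the commutation is trivial, or each summand of $gM$ again lies in an appropriate domain and satisfies $\Phi gM=g\Phi M$ (resp.\ $\Psi gM=g\Psi M$). These verifications are done case-by-case using Proposition \ref{resindprop}. For $\Phi$ with $g=\res_p$ or $\ind_p$, the formulas shift transparently under $J'\mapsto J'+J$ and $k\mapsto k\cdot p^J$. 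For $\Phi$ with $g=\res_q$ or $\ind_q$, $q\ne p$, the $p$-shift is invisible to the $q$-action by the Chinese-remainder style argument already present in the proof of Proposition \ref{somecommprop}. For $\Psi$ with $g=\res_p$ or $\ind_p$, one uses that by construction $\res_p W_k^{I(j)}(n)=W_k^I(n)$ and hence $\ind_p W_k^I(n)=\bigoplus_{j\in\lb 1,p\rb}W_k^{I(j)}(n)$, so $\Psi$ intertwines these decompositions between the roots $W_k(m)$ and $W_{k'}(mp^J)$.

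The main obstacle lies in the $\Psi$ case with $g=\res_q$ or $\ind_q$ for $q\ne p$. Here the target module lives over a different base (with an extra factor or quotient by $q$), and one must show that the sets $K_{p,n,I}$ defining the $W_k^I$ labels behave functorially under $\ind_q$ and $\res_q$, so that the commutation $\Psi g=g\Psi$ holds across the transition. I would extract this from the commutation $\res_p\ind_q\cong\ind_q\res_p$ of Proposition \ref{somecommprop} (together with the restriction-only statement of Proposition \ref{relsprop}(i)), applied inductively along $\len(I)$, so that each step of the recursive construction of $W_k^I$ is transported correctly across $\Psi$.
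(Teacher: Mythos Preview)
Your approach is essentially correct and tracks the paper's proof for parts (i)--(iii). For part (iv) the two arguments diverge in one instructive way.

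The paper does not do a separate case analysis on $\res$ versus $\ind$. Instead it fixes simple $A,B$ in the domain and runs the Frobenius reciprocity chain
\[
\dim\Hom(A,\ind_p B)=\dim\Hom(\res_p A,B)=\dim\Hom(\res_p\Phi A,\Phi B)=\dim\Hom(\Phi A,\ind_p\Phi B),
\]
so that the only thing to verify directly is the middle step: that $B$ is a summand of $\res_p A$ if and only if $\Phi(B)$ is a summand of $\res_p\Phi(A)$. This is a single check against Proposition~\ref{resindprop}, and it buys you the $\ind_p$ commutation for free. Your direct computation of $\Phi(\ind_p M)$ versus $\ind_p\Phi(M)$ reaches the same conclusion but with roughly twice the bookkeeping; the Frobenius trick is worth knowing.

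On the $q\neq p$ case: you are right that this needs an argument, and in fact the paper's written proof only displays the chain for the distinguished prime $p$, leaving other primes implicit (presumably the same Frobenius chain with $q$ in place of $p$, together with the same pairwise check against Proposition~\ref{resindprop}). So your explicit treatment here is not excessive caution. Your proposed route through Proposition~\ref{somecommprop} and induction on $\len(I)$ works in spirit, but be aware that the delicate point is not the commutation $\res_p\ind_q\cong\ind_q\res_p$ itself; it is that the labelling $I\in\mathcal{I}$ (defined via the ordering of $K_{p,n,I}$) is transported coherently under $\ind_q$ and $\res_q$. The paper sidesteps this by declaring the $\Psi$ case ``analogous'' and by its remark that the choice of ordering is not essential. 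If you carry out your plan, the cleanest phrasing is to verify, for each $q$, that $\Psi$ preserves the multiplicity of every simple in $\res_q(-)$, exactly as in the middle step of the Frobenius chain; then $\ind_q$ comes for free and the labelling issue never needs to be unwound explicitly.
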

\begin{proof}
{\bf Part (i):} Every $V_{a,b}(mp^l)$ and every $W_k(mp^l)$ with $p|k$ lies in the domain of some $\Phi_{p,m,J}$, while every $W_k(mp^l)$ with $p\not|k$ lies in the domain of some $\Psi_{p,k,m,k',J}$. 

{\bf Part (ii):} It is clear from Proposition \ref{resindprop} that the set of modules of the forms $V_{a,b}(mp^{J'})$ and $W_k(mp^{J'}$ with $k$ divisible by $p$ is closed under the action of $A_{P,\mathcal{M}}$, and that the same holds for the set of modules of the form $W_k^I(m)$ where $k\in\lb 1,\frac{m-1}{2}\rb$. 

{\bf Part (iii):} The partial functions $\Phi_{p,m,J}$ and $\Psi_{k,m,k',J}$ are clearly injective on their domains, so they have partial inverses with domains $\im(\Phi_{p,m,J})$ and $\im(\Psi_{k,m,k',J})$ respectively. 

{\bf Part(iv):} Let us consider only the case of the partial functions $\Phi_{p,m,J}$ (the proof of the statement for $\Psi_{k,m,k',J}$ is analogous). Let $A,B\in \dom(\Phi_{p,J})$ for some $p$ and $J$. Under the assumption $\res_p(A)\ne 0$, we have that
\begin{equation}
\label{commeq}
\begin{aligned}
1=\dim\Hom(A,\ind_p B)&\Leftrightarrow\\
1=\dim\Hom(\res_p A,B)&\Leftrightarrow\\
1=\dim\Hom(\res_p\Phi_{p,m,J}(A),\Phi_{p,m,J}(B))&\Leftrightarrow\\
1=\dim\Hom(\Phi_{p,m,J}(A),\ind_p\Phi_{p,m,J}(B)).
\end{aligned}
\end{equation}
Here all $\Hom$:s are with respect to the category $\bigoplus_{n\ge 3} D_{2n}\mmod$. We use Frobenius reciprocity for the first and third equivalences. For the second equivalence, we can use Proposition \ref{resindprop} to verify for every pair of simple modules $A$ and $B$ that $B$ occurs as a summand of $\res_p(A)$ if and only if $\Phi_{p,m,J}(B)$ occurs a a summand of $\res_p(\Phi_{p,m,J}(A))$. For instance, $V_{1,1}(mp^2)$ is a summand of $\res_p(W_{mp^2}(mp^3)\cong V_{1,1}(mp^2)\oplus V_{1,-1}(mp^2)$, and indeed $\Phi_{p,m,1}(V_{1,1}(mp^2))=V_{1,1}(mp^3)$ is a summand of $\res_p(\Phi_{p,m,1}(W_{mp^2}(mp^3))=\res_p(W_{mp^3}(mp^4))\cong V_{1,1}(mp^3)\oplus V_{1,-1}(mp^3)$. 

Because the dimensions of $\Hom(\res_pA,B)$ for various modules $B$ encode the result of applying $\res_p$ to $A$, because the dimensions of $\Hom(A,\ind_pB)$ for various modules $A$ encode the result of applying $\ind_p$ to $B$, and because part (i) ensures that $A,B\in\dom(\Phi_{p,m,J})$ causes no restriction in this encoding, we get that the desired result follows for $z$ of degree 1. From this, the more general result is immediate. 
\end{proof}

\begin{myprop}
\label{nadirprop}
Let $P$ be a set of odd primes, and let $\mathcal{M}\subset\mathcal{G}$ be some $A_P$-submodule spanned by simple $D_{2n}$-modules with $n$ odd. Let $z_1\in A_P$ be a monomial, and let $z_2$ be the result of reordering the factors of $z_1$ in a way such that the relative order of factors $\res_p$ and $\ind_p$ for each fixed $p\in P$ is unchanged. Assume that at least one of the following conditions holds
\begin{enumerate}
\item
Either none or both of $z_1$ and $z_2$ have a total nadir.

\item
There is no simple $D_{2n}$-module in $\mathcal{M}$ such that all prime factors of $n$ belong to $P$. 
\end{enumerate}
Then
\begin{equation*}
\varphi_{P,\mathcal{M}}(z_1)=\varphi_{P,\mathcal{M}}(z_2).
\end{equation*}
\end{myprop}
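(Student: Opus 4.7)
The plan is to reduce to the action on each simple module in $\mathcal{M}$ and handle the computation of $z_iL$ by a combination of commutations given by Propositions \ref{relsprop} and \ref{somecommprop}, and, in the more delicate case, a lifting trick via Lemma \ref{partfunlem}. First I would reduce to verifying $z_1L = z_2L$ for each simple $L\in\mathcal{M}$, say with $L$ a $D_{2n}$-module for some odd $n$. The reordering $z_1\rightsquigarrow z_2$ can be realised as a composition of adjacent transpositions, each swapping two factors of distinct primes (no factor is ever moved past another factor of the same prime, since intra-prime order is preserved). For such a single transposition $X_pY_q\leftrightarrow Y_qX_p$ (with $X,Y\in\{\res,\ind\}$ and $p\neq q$), Propositions \ref{relsprop} and \ref{somecommprop} imply equality on any simple summand $D_{2m}$-module such that $m$ is not equal to whichever of $p,q$ is attached to a $\res$-factor in the transposition; the only obstruction to commuting is therefore the appearance of an intermediate summand at a level $D_{2r}$ with $r\in P$.

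Under condition $(2)$, every simple module in $\mathcal{M}$ has its underlying $n$ containing a prime factor outside $P$, and the $A_P$-closedness of $\mathcal{M}$ propagates this property to all intermediate summands in the computation of either $z_iL$; consequently no intermediate summand lies at any level $D_{2r}$ with $r\in P$, every adjacent transposition is a valid commutation, and $z_1L = z_2L$ follows directly.

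Under condition $(1)$ I would first invoke Lemma \ref{nadirlem}: the per-prime nadirs $d_p(z_1) = d_p(z_2)$ depend only on the per-prime subsequences and are therefore unchanged by the reordering, and the oddness of $n$ precludes the spurious case $n=2\prod_{p\in P}p^{-d_p}$; hence the vanishing criteria of Lemma \ref{nadirlem} coincide for $z_1L$ and $z_2L$, so either both vanish (nothing to prove) or both are nonzero. In the latter case I would lift $L$ to a simple module $L'\in\mathcal{M}$ by applying an appropriate composition $\Gamma$ of the partial functions $\Phi_{p,m,J}$ and $\Psi_{p,k,m,k',J}$ from Lemma \ref{partfunlem}, choosing the lifting exponent large enough that every intermediate level in the trajectory of $L'$ under either $z_1$ or $z_2$ strictly exceeds every prime of $P$ appearing in $z_1$. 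Then $L'$ satisfies the no-obstruction hypothesis of the previous paragraph and the commutation argument yields $z_1L' = z_2L'$. Finally, Lemma \ref{partfunlem}(iv), applicable because $z_iL\neq 0$, gives $\Gamma(z_iL) = z_i\Gamma L = z_iL'$, and the partial invertibility of $\Gamma$ provided by Lemma \ref{partfunlem}(iii) lets us conclude $z_1L = z_2L$.

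The main difficulty lies in setting up the lifting in the condition-$(1)$ case: one must choose $\Gamma$ so that $L\in\dom(\Gamma)$ and $L'\in\mathcal{M}$ (this uses the ``all or nothing'' closure hypothesis on $\mathcal{M}$ together with its $A_P$-closedness, which together force every simple over $D_{2np^J}$ to lie in $\mathcal{M}$ as soon as $L$ does), and pick the exponent large enough that the lifted trajectory avoids every prime of $P$ occurring in $z_1$ at every intermediate stage of every monomial in the adjacent-transposition decomposition (a uniform $p$-adic-valuation estimate shows a single large $J$ suffices, since the nadirs $d_p$ are invariant across the decomposition).
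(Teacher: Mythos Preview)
Your approach is essentially the paper's: reduce to a fixed simple $L$, use Lemma~\ref{nadirlem} to match vanishing, and in the nonzero case lift $L$ via the partial functions of Lemma~\ref{partfunlem} to a level where all adjacent transpositions are justified by Propositions~\ref{relsprop} and~\ref{somecommprop}, then transfer back via the partial inverse.

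Two small points. First, you invoke the ``all or nothing'' closure hypothesis on $\mathcal{M}$, but this is not part of the proposition's hypotheses; the paper handles this by first applying Lemma~\ref{wloglem} to enlarge $\mathcal{M}$ to one satisfying the assumptions of Lemma~\ref{partfunlem}, and you should do the same. Second, your remark that ``the oddness of $n$ precludes the spurious case $n=2\prod_{p\in P}p^{-d_p}$'' is misleading: the extra vanishing in Lemma~\ref{nadirlem}(ii) genuinely occurs in this setting (see Example~\ref{nadirex}, where $\ind_3\ind_5\res_3\res_5$ kills $V_{1,1}(15)$), so the criteria of parts (i) and (ii) are \emph{not} the same. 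Fortunately your argument does not actually need that remark: under condition~(1), both $z_1$ and $z_2$ fall under the same part of Lemma~\ref{nadirlem} with identical nadirs $d_p$, so the vanishing criteria coincide for that reason alone. The lifting then handles the intermediate reorderings exactly as you describe, and a single $\Gamma$ with sufficiently large exponent in one prime direction suffices (the paper phrases it as a sequence of $\Gamma_i$, but your observation that a uniform lift works is correct).
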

\begin{proof}
By Lemma \ref{wloglem} we may assume that $\mathcal{M}$ satisfies the assumptions of Lemma \ref{partfunlem}. Let $L\in\mathcal{M}$ be any simple $D_{2n}$-module. We have by Lemma \ref{nadirlem} that either $z_1L=0=z_2L$ or $z_1L\ne 0\ne z_2L$ (here we must use either of the two assumptions). In the first case we are done, so let us consider the latter case. If the second assumption holds, the result follows immediately from Propositions \ref{relsprop} and \ref{somecommprop}. If at least one of the assumptions holds, the following argument applies. By Lemmata \ref{nadirlem} and \ref{partfunlem} we may (using the notation of the latter lemma) pick a sequence of
\begin{equation*}
\Gamma_i\in\{\Phi_{p,m,J},\Psi_{k,m,k',J}|p,m,J,k,k'\text{ ranging over all possibilities allowed by Lemma \ref{partfunlem}}\}
\end{equation*}
with the index $i$ ranging from 1 to some positive integer $l$, and furthermore a partial inverse $\Gamma_i^{-1}$ of each $\Gamma_i$ such that $z'\Gamma_1\circ\dots\circ\Gamma_lL$ and is well-defined and nonzero for all possible results $z'$ of reordering the factors of $z_1$. By Proposition \ref{somecommprop} we have for $p,q\in P$ distinct that
\begin{equation*}
\res_p\ind_qL'=\ind_q\res_pL'
\end{equation*}
for any modules $L'$ such that $\res_pL'\ne 0$. Also using Proposition \ref{relsprop}, we by Lemma \ref{partfunlem} and the choice of our $\Gamma_i$ then have
\begin{align*}
z_1L&=\Gamma_l^{-1}\circ\dots\circ\Gamma_1^{-1}\circ\Gamma_1\circ\dots\circ\Gamma_lz_1L\\&=\Gamma_l^{-1}\circ\dots\circ\Gamma_1^{-1}z_1\Gamma_1\circ\dots\circ\Gamma_lL\\&=\Gamma_l^{-1}\circ\dots\circ\Gamma_1^{-1}z_2\Gamma_1\circ\dots\circ\Gamma_lL\\&=\Gamma_l^{-1}\circ\dots\circ\Gamma_1^{-1}\circ\Gamma_1\circ\dots\circ\Gamma_lz_2L=z_2L.
\end{align*}
\end{proof}

\begin{myex}
\label{nadirex}
Neither of the monomials $z_1=\ind_5\res_5\ind_3\res_3$ and $z_2=\ind_3\res_3\ind_5\res_5$ has a total nadir, so according to Proposition \ref{nadirprop}, we have $\varphi_{P,\mathcal{M}}(z_1)=\varphi_{P,\mathcal{M}}(z_2)$ for any $P$ containing $3$ and $5$, and $\mathcal{M}\subset\mathcal{G}$ as in the proposition statement. In particular, we should expect
\begin{equation*}
z_1V_{1,1}(15)=z_2V_{1,1}(15).
\end{equation*}
This is indeed the case, as confirmed by the following computations:
\begin{align*}
&z_1V_{1,1}(15)=\ind_5\res_5\ind_3\res_3V_{1,1}(15)=\ind_5\res_5\ind_3V_{1,1}(5)\\&=\ind_5\res_5(V_{1,1}(15)\oplus W_{5}(15))=\ind_5(V_{1,1}(3)\oplus W_2(3))\\&=V_{1,1}(15)\oplus W_3(15)\oplus W_6(15)\oplus W_2(15)\oplus W_1(15)\oplus W_5(15)\oplus W_4(15)\oplus W_7(15),
\end{align*}
and 
\begin{align*}
&z_2 V_{1,1}(15)=\ind_3\res_3\ind_5\res_5 V_{1,1}(15)=\ind_3\res_3\ind_5 V_{1,1}(3)\\&=\ind_3\res_3(V_{1,1}(15)\oplus W_3(15)\oplus W_6(15))=\ind_3(V_{1,1}(5)\oplus W_2(5)\oplus W_1(5))\\&=V_{1,1}(15)\oplus W_5(15)\oplus W_2(15)\oplus W_3(15)\oplus W_7(15)\oplus W_1(15)\oplus W_4(15)\oplus W_6(15),
\end{align*}
which have equal results by direct comparison. 

It is not, however, the case that $z_1V_{1,1}(15)$ is invariant under elementary transpositions of the factors of $z_1$, even when the composition of these transpositions take $z_1$ to $z_2$. Indeed, we have such transpositions
\begin{align*}
&z_1=\ind_5\res_5\ind_3\res_3\rightsquigarrow \ind_5\ind_3\res_5\res_3\rightsquigarrow \ind_3\ind_5\res_5\res_3\\&\rightsquigarrow \ind_3\ind_5\res_3\res_5\rightsquigarrow \ind_3\res_3\ind_5\res_5=z_2
\end{align*}
but
\begin{equation*}
\ind_3\ind_5\res_3\res_5 V_{1,1}(15)=\ind_3\ind_5\res_3 V_{1,1}(3)=\ind_3\ind_5 0=0\ne z_1 V_{1,1}(15).
\end{equation*}
This is because $\ind_3\ind_5\res_3\res_5$ has a total nadir, so we can not apply Proposition \ref{somecommprop}. In order to circumvent this problem in the proof of Proposition \ref{nadirprop}, we for the ``missing link'' $\ind_3\ind_5\res_3\res_5$ instead compute
\begin{align*}
&\Phi_{3,5,1}^{-1}\ind_3\ind_5\res_3\res_5\Phi_{3,5,1}V_{1,1}(15)=\Phi_{3,5,1}^{-1}\ind_3\ind_5\res_3\res_5 V_{1,1}(45)\\&=\Phi_{3,5,1}^{-1}\ind_3\ind_5 V_{1,1}(3)=\Phi_{3,5,1}^{-1}\ind_3(V_{1,1}(15)\oplus W_3(15)\oplus W_6(15))\\&=\Phi_{3,5,1}^{-1}(V_{1,1}(45)\oplus W_{15}(45)\oplus W_3(45)\oplus W_{12}(45)\oplus W_{18}(45)\oplus W_6(45)\oplus W_9(45)\oplus W_{21}(45))\\&=V_{1,1}(15)\oplus W_5(15)\oplus W_1(15)\oplus W_4(15)\oplus W_6(15)\oplus W_2(15)\oplus W_3(15)\oplus W_7(15).
\end{align*}
This is indeed equal to $z_1V_{1,1}(15)$ and $z_2V_{1,1}(15)$. 
\end{myex}

\begin{mylem}
\label{deplem}
Let $P$ be some set of odd primes and let $\mathcal{M}\subset\mathcal{G}$ be any $A_{P,\mathcal{G}}$-submodule spanned by $D_{2n}$-modules with odd $n$, such that for each fixed $n$ either all or none of the simple $D_{2n}$-modules lie in $\mathcal{M}$. Let also $S\subset A_{P}$ be a set of monomials whose image in $A_{P,\mathcal{M}}$ is linearly dependent and minimal with this property. Then the following hold.
\begin{enumerate}
\item[$($i$)$]
The respective termini of the elements in $S$ with respect to each prime in $P$ are equal, and the respective nadirs of the elements in $S$ with respect to each prime in $P$ are equal. 
\item[$($ii$)$]
If in addition $\mathcal{M}$ consists of $D_{2n}$-modules ($n$ is not necessarily fixed) with all prime factors of $n$ belonging to $P$, then either all elements in $S$ have a total nadir or none has.
\end{enumerate}
\end{mylem}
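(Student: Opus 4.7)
The plan is to argue both parts by contradiction, relying on the fact that a minimal linear dependence $\sum_{z \in S} c_z \bar{z} = 0$ in $A_{P,\mathcal{M}}$ is equivalent to $\sum_{z \in S} c_z z L = 0$ in $\mathcal{G}$ for every simple $L \in \mathcal{M}$, and on isolating, in each case, a simple module $L$ on which only a proper sub-sum of the monomials in $S$ survives; that sub-sum then inherits the linear dependence, contradicting minimality.

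For part (i), I would first deal with termini. If two elements of $S$ had different terminus profiles, then for any simple $D_{2n}$-module $L \in \mathcal{M}$ the images $z L$ would live in the Grothendieck-group summands indexed by the different integers $n \prod_{p \in P} p^{e_p(z)}$. Because $\mathcal{G}$ decomposes as a direct sum along these $D_{2n'}$-mod, the relation $\sum c_z z L = 0$ splits into independent sub-relations, one per terminus profile, and each is a dependence on a proper subset of $S$. Assuming termini now agree, suppose some nadir still differs; fix such a prime $p$, set $d^{*} = \max_{z \in S} d_p(z)$, and let $S^{*} = \{z \in S : d_p(z) = d^{*}\}$, a proper subset. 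I would produce $L \in \mathcal{M}$ over $D_{2n}$ with $v_p(n) = -d^{*}$ and $v_q(n)$ large for every other $q \in P$; by Lemma \ref{nadirlem} this forces $z L = 0$ for $z \in S \setminus S^{*}$ (whose $p$-nadir is strictly more negative than $v_p(n)$ permits), while $z L \neq 0$ for $z \in S^{*}$ (all $q$-divisibility conditions hold, and making $v_q(n)$ strictly larger than $-d_q(z)$ at some $q \neq p$ avoids the degenerate bottom case of the lemma). Existence of such $L$ in $\mathcal{M}$ follows from the $A_P$-closure together with the ``all or none'' hypothesis: from any initial simple in $\mathcal{M}$ one can apply $\ind_q$ to raise valuations and $\res_p$ to lower them (avoiding the forbidden $\res_p$ on $D_{2p}$-modules) until the required $D_{2n}$ is reached.

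For part (ii) we may assume all termini and all nadirs agree; write $d_p$ for the common $p$-nadir, and suppose for contradiction that some elements of $S$ have a total nadir while others do not. Under the additional hypothesis, the integer $n^{*} = \prod_{p \in P} p^{-d_p}$ has all its prime factors in $P$, so a simple $D_{2 n^{*}}$-module $L$ lies in $\mathcal{M}$, reachable by the same closure argument as above. For $z \in S$ with a total nadir, the total-nadir case of Lemma \ref{nadirlem} (the computation reaches the ``undefined'' dihedral level at the bottom, where the offending $\res_p$ is defined to be zero) gives $z L = 0$; for $z$ without a total nadir, only the divisibility conditions matter, and they are all met, so $z L \neq 0$. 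Splitting $\sum c_z z L = 0$ by totality once more yields a dependence on a proper subset of $S$.

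The hard part will be verifying the existence of the distinguishing simple $L$ inside $\mathcal{M}$: the $A_P$-submodule structure of $\mathcal{M}$, together with the ``all or none'' closure condition, must be exploited carefully to navigate between simples with prescribed $D_{2n}$-parameters while sidestepping the singular $\res_p$-on-$D_{2p}$ step. This is especially delicate in part (ii), where the target $n^{*}$ can be as small as the smallest prime in $P$ and the reduction path has little slack.
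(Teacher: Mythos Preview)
Your terminus argument is fine and matches the paper's: for every simple $L\in\mathcal{M}$ over $D_{2n}$, the element $zL$ lives over $D_{2np^{e_p(z)}}$, so the relation $\sum_z c_z zL=0$ splits uniformly over all $L$, and the sub-sum with maximal terminus annihilates every $L$, giving a genuine dependence in $A_{P,\mathcal{M}}$ on a proper subset.

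Your nadir argument, however, has a real gap. You construct one distinguished $L$ (over $D_{2n}$ with $v_p(n)=-d^{*}$) on which the monomials with smaller $p$-nadir vanish while those in $S^{*}$ do not, and conclude that $\sum_{z\in S^{*}}c_z zL=0$. But this is a single equation in $\mathcal{G}$, not a relation in $A_{P,\mathcal{M}}$: to contradict minimality you need $\sum_{z\in S^{*}}c_z zM=0$ for \emph{every} $M\in\mathcal{M}$, i.e.\ $\sum_{z\in S^{*}}c_z z\in\ann_{A_P}(\mathcal{M})$. For $M$ over $D_{2n'}$ with $v_p(n')>-d^{*}$, the terms with smaller nadir need not vanish on $M$, so your separation breaks down and you cannot conclude $\alpha(M)=0$ there. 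The same defect appears in part~(ii): the relation $\sum_{z\text{ without total nadir}}c_z zL=0$ at the single module over $D_{2\prod p^{-d_p}}$ is not a dependence in $A_{P,\mathcal{M}}$.

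What the paper does to close this gap is exactly the machinery of Lemma~\ref{partfunlem}. For an \emph{arbitrary} $M\in\mathcal{M}$ it builds a partial function $\Gamma$ (one of the $\Phi_{p,m,J}$ or $\Psi_{p,k,m,k',J}$) with $M=\Gamma(M')$, where $M'$ lives over $D_{2mp^{-d^{*}}}$; on $M'$ the terms with smaller nadir vanish by Lemma~\ref{nadirlem}, so $\alpha(M')=0$, and then the commutation property $\Gamma\circ\alpha=\alpha\circ\Gamma$ (part~(iv) of Lemma~\ref{partfunlem}) pushes this back up to $\alpha(M)=0$. That transfer step is precisely the missing idea in your argument, and it is the reason the paper develops the $\Phi$ and $\Psi$ translations in the first place. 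Part~(ii) is handled analogously with an iterated sequence $\Gamma_1,\dots,\Gamma_{|P|}$.
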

\begin{proof}
Let $\gamma\in A_P$ be a nonzero linear combination of elements in $S$ whose image in $A_{P,\mathcal{M}}$ is zero. 

{\bf Part (i):} Let $M\in\mathcal{M}$ be an arbitrary simple $D_{2n}$-module. By assumption, we have $\gamma(M)=0$. Fix an arbitrary $p\in P$, let $e_p$ be the largest terminus and $d_p$ be the largest nadir of any element in $S$ with respect to $p$. Write $\gamma=\alpha'+\beta'$, where the termini of the terms in $\alpha'$ with respect to $p$ equal $e_p$ while the termini of the terms in $\beta'$ with respect to $p$ are less than $e_p$. Then $\alpha'(M)$ is a linear combination of $D_{2np^{e_p}}$-modules (or trivially zero if $2np^{e_p}$ is not an integer $\ge 3$) while $\beta'(M)$ is a linear combination of modules over other groups, so we must have $\alpha'(M)=0$. Hence by the minimality of $S$, there are no monomial terms in $\gamma$ except for those occurring already in $\alpha'$, so we must have $\beta'=0$ and $\gamma=\alpha'$.  

As for the corresponding statement for nadirs, write $n=mp^{J_1}$, where either $p\not|m$ or $m=p$. Write also $\gamma=\alpha+\beta$, where the nadirs of the terms in $\alpha$ with respect to $p$ equal $d_p$ while the nadirs of the terms of $\beta$ with respect to $p$ are less than $d_p$. It follows from our assumptions that $\alpha\ne 0$. Assume towards a contradiction that also $\beta\ne 0$. 

Since $M$ is arbitrary, it now suffices to show that also $\alpha(M)=0$ in order to contradict the minimality of $S$. We may without loss of generality assume that $J_1\ge -d_p$, since otherwise $\alpha(M)=0$ trivially. If $\res_p^{-d_p}(M)$ has nonzero projection onto some $V_{a,b}(m')$, then $M$ is either of the form $M=V_{a,b}(mp^{J_1})$ or of the form $M=W_k(mp^{J_1})$ for some $k$ divisible by $mp^{J_1+d_p}$. Define in this case $\Gamma=\Phi_{p,m,J_1+d_p}$. Then either $\Gamma(V_{a,b}(mp^{-d_p}))=M$ or $\Gamma(W_{kp^{-J_1-d_p}}(mp^{-d_p}))=M$.

If instead $\res_p^{-d_p}(M)$ has zero projection onto all one-dimensional simple dihedral group modules, then $M$ may be written on the forms $M=W_k(mp^{J_1})=W_{k'}^{I\cdot I'}(m')$ with $\res_p^{-d_p}(M)=W_k(mp^{J_1+d_p})=W_{k'}^{I}(m')$ simple. Define in this case $\Gamma=\Psi_{p,1,m,k,J_1+d_p}$. Then $\Gamma(W_1^{I'}(m))=M$. 

Let $\Gamma^{-1}$ be the partial inverse of $\Gamma$ (the existence of which is given by part (iii) of Lemma \ref{partfunlem}). In any of the above cases we have that $M\in\im(\Gamma)$, so that $M\in\dom(\Gamma^{-1})$. Let $M'=\Gamma^{-1}(M)$. Using the assumption on $\gamma$, we compute
\begin{equation*}
0=\gamma(M')=(\alpha+\beta)(M')=\alpha(M'),
\end{equation*}
where for the last equality we used Lemma \ref{nadirlem} together with the facts that the nadirs of the terms of $\beta$ with respect to $p$ are smaller than $d_p$ and $M'$ is a $D_{2mp^{-d_p}}$-module. This implies that indeed 
\begin{equation*}
0=\Gamma(\alpha(M'))=\alpha(\Gamma(M'))=\alpha(M),
\end{equation*}
where for the second equality we used part (iv) of Lemma \ref{partfunlem} together with the fact that no monomial term in $\alpha$ annihilates $M'$. This latter fact in turn follows from $M'$ being a $D_{2mp^{-d_p}}$-module, the terms of $\alpha$ having nadir $d_p$ with respect to $p$, and Lemma \ref{nadirlem}. 

{\bf Part (ii):} Let $M\in\mathcal{M}$ be an arbitrary simple $D_{2n}$-module (where we have assumed that the prime factors of $n$ all belong to $P$). By our assumption on $\gamma$, we have $\gamma(M)=0$. Write $\gamma=\alpha+\beta$ where this time all monomial terms in $\alpha$ do not have any total nadir and all monomial terms in $\beta$ do have a total nadir. Assume towards a contradiction that $\alpha\ne 0\ne\beta$ (that $\alpha\ne 0$ implies in particular that $|P|\ge 2$). By part (i) we may assume that for each $p\in P$, the same number $d_p$ is the nadir of every monomial term of $\gamma$ with respect to $p$. We may also assume as in part (i) that for every $p\in P$ we have $np^{d_p}$ is an integer $\ge 3$. In particular, we may assume that $n$ is not a prime power, since otherwise $\alpha(M)=0$ already because the terms of $\alpha$ must have nonzero nadirs with respect to at least two different primes. 

Fix any total order on $P$, and an indexing such that for $p_i,p_j\in P$ we have $p_i<p_j$ if and only if $i<j$, where $i,j\in\lb 1,|P|\rb$. Define some partial functions $\Gamma_i$ and modules $M_i$ as follows. Let first $M_0=M$. Then inductively let $\Gamma_{i+1}$ be constructed out of $M_i$ and $p_{i+1}$ as $\Gamma$ was constructed out of $M$ and $p$ in part (i), and set $M_{i+1}=\Gamma_{i+1}^{-1}(M_i)$, up to $i=|P|-1$. If $M_i$ is a $D_{2n_i}$-module, note that $n_i$ will contain a factor $p^{-d_p}$ for every $p\in P$. Since $d_p\ne 0$ for at least two choices of $p\in P$, no $n_i$ is a prime power. It in particular follows that the $m$-value in each step $i$ of the construction will satisfy $p_i\not|m$, hence that the finally obtained module $M_{|P|}$ is a $D_{2\prod_{p\in P}p^{-d_p}}$-module. Similarly to part (i), we now compute 
\begin{equation*}
0=\gamma(M_{|P|})=(\alpha+\beta)(M_{|P|})=\alpha(M_{|P|}),
\end{equation*}
where for the last equality we used part (ii) of Lemma \ref{nadirlem} together with the facts that the terms of $\beta$ have a total nadir and $M_{|P|}$ is a $D_{2\prod_{p\in P}p^{-d_p}}$-module. This as before implies that
\begin{equation*}
0=\Gamma_1\circ\dots\circ\Gamma_{|P|}(\alpha(M_{|P|}))=\alpha(\Gamma_1\circ\dots\circ\Gamma_{|P|}(M_{|P|}))=\alpha(M),
\end{equation*}
where for the second equality we used part (iv) of Lemma \ref{partfunlem} together with the fact that no monomial term of $\alpha$ annihilates $M_{|P|}$. This latter fact in turn follows from $M_{|P|}$ being a $D_{2\prod_{p\in P}p^{-d_p}}$-module, the terms of $\alpha$ having no total nadir, and Lemma \ref{nadirlem}.
\end{proof}

\begin{myex}
\label{depex}
Consider the following situation, which is a very special case of the proof of part (i) of Lemma \ref{deplem}. Let $P$ be a set of odd primes with $3\in P$, let $\alpha,\beta\in A_P$ be linear combinations of monomials such that the nadir of every monomial term of $\alpha$ with respect to $3$ is $-1$ while the nadir of every monomial term of $\beta$ with respect to $3$ is $-2$. Assume that
\begin{equation*}
(\alpha+\beta)W_{-1+5}(15)=0.
\end{equation*}
We will show that
\begin{equation*}
\alpha W_{-5+15}(45)=0.
\end{equation*}
Note that the action of $\res_3$ maps the modules $W_5(45)$, $W_{-5+15)}(45)$ and $W_{5+15}(45)$ to $W_{5}(15)$. Hence $W_{-5+15}(45)=W_5^{(2)}(15)$. Similarly, $W_{-1+5}(15)=W_1^{(2)}(5)$. Because every monomial term of $\beta$ must annihilate $W_{-1+5}(15)$ by Lemma \ref{nadirlem}, we have
\begin{align*}
0=(\alpha+\beta)W_{-1+5}(15)=\alpha W_{-1+5}(15),
\end{align*}
from which follows that
\begin{align*}
0=\Psi_{3,1,5,5,1}\alpha W_{-1+5}(15)=\alpha\Psi_{3,1,5,5,1}W_{-1+5}(15)=\alpha W_{-5+15}(45).
\end{align*}
\end{myex}

\begin{mycor}
\label{welldefcor}
Let $P$ be some set of odd primes and let $\mathcal{M}\subset\mathcal{G}$ be some $A_P$-submodule spanned by simple $D_{2n}$-modules with $n$ odd and such that for each fixed $n$ either all or none of the simple $D_{2n}$-modules lie in $\mathcal{M}$. Furthermore, let $p\in P$ be arbitrary, and let $z_1,z_2\in A_P$ be monomials such that $\varphi_{P,\mathcal{M}}(z_1)=\varphi_{P,\mathcal{M}}(z_2)$. Then the respective termini and nadirs of $z_1$ and $z_2$ are equal.
\end{mycor}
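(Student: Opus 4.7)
The plan is to deduce the statement directly from Lemma \ref{deplem}(i) by viewing the hypothesis $\varphi_{P,\mathcal{M}}(z_1) = \varphi_{P,\mathcal{M}}(z_2)$ as a two-term linear dependence among monomial images in $A_{P,\mathcal{M}}$.

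First I would reduce to the case $z_1 \ne z_2$ as monomials in $A_P$, since the conclusion is trivial otherwise. Setting $S = \{z_1, z_2\}$, the hypothesis gives the relation $\varphi_{P,\mathcal{M}}(z_1) - \varphi_{P,\mathcal{M}}(z_2) = 0$, so the image of $S$ in $A_{P,\mathcal{M}}$ is linearly dependent. The key next step is to verify that $S$ is minimal with this property: provided neither $\varphi_{P,\mathcal{M}}(z_i)$ is individually zero, each singleton subset of $S$ has nonzero and hence linearly independent image, so $S$ is indeed minimal. Lemma \ref{deplem}(i) then applies and directly yields that the respective termini and the respective nadirs of $z_1$ and $z_2$ with respect to every prime in $P$, in particular the fixed $p$ in the statement, coincide.

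The step that requires the most care, and is the main obstacle, is the minimality check, specifically the degenerate case in which $\varphi_{P,\mathcal{M}}(z_1) = \varphi_{P,\mathcal{M}}(z_2) = 0$: then $S$ is not minimal (each singleton is already dependent) and Lemma \ref{deplem}(i) gives no information. To handle it I would appeal instead to Lemma \ref{nadirlem}, which characterizes precisely the set of simple $D_{2n}$-modules in $\mathcal{M}$ annihilated by a given monomial in terms of its termini and nadirs, combined with the closure hypothesis that $\mathcal{M}$ contains the full collection of simple $D_{2n}$-modules for every admissible $n$ it meets. Since this degenerate case concerns only the zero element of $A_{P,\mathcal{M}}$, for which termini and nadirs are not meaningful invariants of the image, it does not threaten the corollary's role as a well-definedness statement in the subsequent development.
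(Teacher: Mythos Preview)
Your approach is exactly the paper's: note that $z_1 - z_2 \in \ker(\varphi_{P,\mathcal{M}})$ and apply Lemma~\ref{deplem} to $S = \{z_1, z_2\}$. The paper does not explicitly check minimality; your extra care there is warranted, and the degenerate case you flag in fact cannot arise for $\mathcal{M} \ne 0$, since closure of $\mathcal{M}$ under each $\ind_p$ together with Lemma~\ref{nadirlem} guarantees that every monomial acts nontrivially on some simple module in $\mathcal{M}$.
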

\begin{proof}
If $\varphi_{P,\mathcal{M}}(z_1)=\varphi_{P,\mathcal{M}}(z_2)$, then $z_1-z_2\in\ker(\varphi_{P,\mathcal{M}})$. Now apply Lemma \ref{deplem} to $S=\{z_1,z_2\}$. 
\end{proof}
For an image, $z\in A_{P,\mathcal{M}}$, of a monomial in $A_P$, we define termini and nadirs of $z$ to be those of a monomial representative in $A_P$. By Corollary \ref{welldefcor}, these are well-defined.

\subsection{Additional relations of \texorpdfstring{$A_{P,\mathcal{M}}$}{APM}}

\begin{mylem}
\label{rellem}
Let $P$ be a set of odd primes, and let $\mathcal{M}\subset\mathcal{G}$ be some $A_P$-submodule spanned by simple $D_{2n}$-modules with $n$ odd. Let $p\in P$. Then 
\begin{equation*}
\varphi_{P,\mathcal{M}}(\res_p\res_p\ind_p\ind_p)=\varphi_{P,\mathcal{M}}((p+1)\res_p\ind_p-p).
\end{equation*}
\end{mylem}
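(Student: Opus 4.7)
The plan is to verify the claimed equality directly in the Grothendieck group by showing both sides act identically on every simple module of $\mathcal{M}$; since $\mathcal{M}$ is spanned by simples, this suffices by the observation immediately after the definition of $\varphi_{P,\mathcal{M}}$. Given the standing hypotheses that $p$ and $n$ are odd, the relevant simples are of two types, $V_{1,b}(n)$ with $b\in\{1,-1\}$ and $W_k(n)$ with $k\in\lb 1,(n-1)/2\rb$. A first step is to extract from Proposition \ref{resindprop}, together with Lemma \ref{wlem}(i) and (iii), the compact formulas
\[
\res_p\ind_p V_{1,b}(n) = V_{1,b}(n) + \tfrac{p-1}{2}\bigl(V_{1,1}(n)+V_{1,-1}(n)\bigr),\qquad \res_p\ind_p W_k(n) = p\, W_k(n).
\]
In the $V$-formula, one applies Lemma \ref{wlem}(iii) to rewrite $\res_p W_{jn}(pn)=W_{jn}(n)$ as $V_{1,1}(n)+V_{1,-1}(n)$; in the $W$-formula, one uses that each of the shifted summands $W_{\pm k+jn}(pn)$ restricts (via $\pm$-symmetry) back to $W_k(n)$.

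For the $W$-case, the key observation is that every simple summand of $\ind_p W_k(n)$ is itself a simple $W$-type module over $D_{2pn}$: indeed each index $k+jn$ or $-k+jn$, after $\pm$-reduction modulo $pn$, lies strictly between $0$ and $pn/2$, so in particular is not a multiple of $pn$, and Lemma \ref{wlem}(ii) applies (with $pn$ still odd). Hence the formula $\res_p\ind_p(\cdot)=p\,(\cdot)$ applies at the $D_{2pn}$-level as well, giving
\[
\res_p\res_p\ind_p\ind_p W_k(n) = p\cdot\res_p\ind_p W_k(n) = p^2 W_k(n),
\]
which matches $(p+1)\res_p\ind_p W_k(n) - p W_k(n) = \bigl((p+1)p - p\bigr)W_k(n) = p^2 W_k(n)$.

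For the $V$-case, the plan is to expand $\ind_p\ind_p V_{1,b}(n)$ via Proposition \ref{resindprop}(iii) into the summands $V_{1,b}(p^2n)$ and $W_{ipn}(p^2n)$ (coming from the $V$-part of $\ind_p V_{1,b}(n)$) together with $W_{jn}(p^2n)$ and $W_{\pm jn+lpn}(p^2n)$ (coming from the $W_{jn}(pn)$-parts), then apply $\res_p$ twice, at each stage invoking Lemma \ref{wlem}(iii) to split those $W_{\ell}(m)$ whose index is divisible by $m$ into $V_{1,1}(m)+V_{1,-1}(m)$. Careful counting yields
\[
\res_p\res_p\ind_p\ind_p V_{1,b}(n) = V_{1,b}(n) + \tfrac{p^2-1}{2}\bigl(V_{1,1}(n)+V_{1,-1}(n)\bigr),
\]
which matches $(p+1)\res_p\ind_p V_{1,b}(n) - p\, V_{1,b}(n)$ since $(p+1)\cdot\tfrac{p-1}{2}=\tfrac{p^2-1}{2}$.

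The main obstacle is purely the bookkeeping in the $V$-case: one must keep track of the factor $p$ that emerges from combining the $\tfrac{p-1}{2}$ shifted $W_{jn}(pn)$-summands (each contributing $p$ further shifted $W$'s at the $p^2n$-level) with the additional splitting $\res_p W_{jn}(pn) = V_{1,1}(n)+V_{1,-1}(n)$ which pools all contributions into the two simples $V_{1,\pm 1}(n)$. I would organize the computation as a two-stage reduction, first evaluating $\res_p\ind_p\ind_p V_{1,b}(n)$ as a linear combination of simple $D_{2pn}$-modules and only then applying the final $\res_p$, which keeps the arithmetic transparent.
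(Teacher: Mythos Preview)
Your proposal is correct and follows essentially the same approach as the paper: verifying the identity on each simple module by direct computation using Proposition~\ref{resindprop} and Lemma~\ref{wlem}. Your organization via the intermediate formulas $\res_p\ind_p W_k(n)=pW_k(n)$ and $\res_p\ind_p V_{1,b}(n)=V_{1,b}(n)+\tfrac{p-1}{2}(V_{1,1}(n)+V_{1,-1}(n))$ is in fact a bit cleaner than the paper's one-line expansion, but the substance is identical.
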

\begin{proof}
By direct computation using Proposition \ref{resindprop} (or by looking at Figure 1) we have for any $n\ge 3$ that
\begin{align*}
(p -(p+1)\res_p\ind_p+\res_p\res_p\ind_p\ind_p)(W_k(n))&=pW_k(n)-(p+1)pW_k(n)+p^2W_k(n)\\&=0,
\end{align*}
and
\begin{align*}
&(p -(p+1)\res_p\ind_p+\res_p\res_p\ind_p\ind_p)(V_{a,b}(n))\\&=pV_{a,b}(n)-(p+1)(V_{a,b}(n)+\frac{p-1}{2}(V_{a,b}(n)+V_{-a,b}(n)))+V_{a,b}(n)\\&+\frac{p-1}{2}(V_{a,b}(n)+V_{-a,b}(n))+p\frac{p-1}{2}(V_{a,b}(n)+V_{-a,b}(n))=0.
\end{align*}
The desired result follows.
\end{proof}

\begin{mylem}
\label{mixrel}
Let $P$ be a set of odd primes, and let $\mathcal{M}\subset\mathcal{G}$ be some $A_P$-submodule spanned by simple $D_{2n}$-modules with $n$ odd. Let $p,q\in P$. Then
\begin{equation*}
\varphi_{P,\mathcal{M}}(\frac{1}{p-1}(\res_p\ind_p-p))=\varphi_{P,\mathcal{M}}(\frac{1}{q-1}(\res_q\ind_q-q))
\end{equation*}
\end{mylem}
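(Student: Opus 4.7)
The plan is to verify the identity by directly computing the action of $\frac{1}{p-1}(\res_p\ind_p - p)$ on every simple module in $\mathcal{M}$ using Proposition \ref{resindprop}, and showing the result is independent of $p$. Since $\mathcal{M}$ is spanned by simples, by the remark following the definition of $\varphi_{P,\mathcal{M}}$, this will suffice to establish equality in $A_{P,\mathcal{M}}$.

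First I would treat the two-dimensional modules. For $W_k(n)$ with $n$ odd and $k \in \lb 1, (n-1)/2 \rb$, Proposition \ref{resindprop}(iv) gives
\begin{equation*}
\ind_p W_k(n) \cong W_k(pn) \oplus \bigoplus_{j \in \lb 1, (p-1)/2 \rb}\bigl(W_{-k+jn}(pn) \oplus W_{k+jn}(pn)\bigr).
\end{equation*}
Applying $\res_p$ and using Proposition \ref{resindprop}(ii) together with Lemma \ref{wlem}(i), each summand restricts to $W_k(n)$ (note that since $n$ is odd and $0<k<n/2$, no value $\pm k + jn$ lands in $\tfrac{1}{2}\Z n$). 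Hence $\res_p\ind_p W_k(n) \cong p\, W_k(n)$, so $(\res_p\ind_p - p) W_k(n) = 0$, independently of $p$.

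Next I would treat the one-dimensional modules. Since $n$ is odd, only $V_{1,b}(n)$ occurs, and Proposition \ref{resindprop}(iii) yields
\begin{equation*}
\ind_p V_{1,b}(n) \cong V_{1,b}(pn) \oplus \bigoplus_{j \in \lb 1, (p-1)/2 \rb} W_{jn}(pn).
\end{equation*}
Applying $\res_p$: the first summand returns $V_{1,b}(n)$, while each $W_{jn}(pn)$ restricts to $W_{jn}(n)$ which, by Lemma \ref{wlem}(iii) (since $jn \in \Z n$), decomposes as $V_{1,1}(n) \oplus V_{1,-1}(n)$. Therefore
\begin{equation*}
\res_p\ind_p V_{1,b}(n) = V_{1,b}(n) + \tfrac{p-1}{2}\bigl(V_{1,1}(n) + V_{1,-1}(n)\bigr),
\end{equation*}
and so
\begin{equation*}
\tfrac{1}{p-1}(\res_p\ind_p - p) V_{1,b}(n) = -V_{1,b}(n) + \tfrac{1}{2}\bigl(V_{1,1}(n) + V_{1,-1}(n)\bigr),
\end{equation*}
which again does not depend on $p$.

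Finally, since the action of $\tfrac{1}{p-1}(\res_p\ind_p - p)$ on every simple module spanning $\mathcal{M}$ agrees with that of $\tfrac{1}{q-1}(\res_q\ind_q - q)$, the difference lies in $\ann_{A_P}(\mathcal{M})$, proving the lemma. There is no real obstacle here beyond keeping track of the case split between $V$- and $W$-modules; the essential point is the cancellation engineered by the normalization factor $\tfrac{1}{p-1}$ and the constant $p$, which together eliminate all $p$-dependence from the action.
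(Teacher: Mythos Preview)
Your proof is correct and follows essentially the same approach as the paper: both verify the identity by direct computation of the action of $\frac{1}{p-1}(\res_p\ind_p-p)$ on the two types of simple modules $W_k(n)$ and $V_{1,b}(n)$, showing the result is independent of $p$. Your version simply spells out the intermediate steps (invoking Proposition~\ref{resindprop} and Lemma~\ref{wlem} explicitly) that the paper's proof leaves implicit.
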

\begin{proof}
For $n\ge 3$, we have 
\begin{equation*}
(\res_p\ind_p-p)W_k(n)=0=(\res_q\ind_q-q)W_k(n).
\end{equation*}
Also 
\begin{equation*}
\frac{1}{p-1}(\res_p\ind_p-p)V_{1,b}(n)=\frac{-V_{1,b}(n)+V_{1,-b}(n)}{2}=\frac{1}{q-1}(\res_q\ind_q-q)V_{1,b}(n). 
\end{equation*}
\end{proof}
The following is a corollary of Proposition \ref{relsprop}, Proposition \ref{nadirprop} and Lemma \ref{mixrel}. Note that we from here on will abuse notation and relax the distinction between $\res_p$ and $\ind_p$ on one hand and their images under $\varphi_{P,\mathcal{M}}$ on the other.
\begin{mycor}
\label{ccor}
Let $P$ be a set of odd primes, let $p\in P$, and let $\mathcal{M}\subset\mathcal{G}$ be any $A_{P,\mathcal{G}}$-submodule spanned by $D_{2n}$-modules with odd $n$. Then the center of $A_{P,\mathcal{M}}$ contains the element $\res_p\ind_p$.
\end{mycor}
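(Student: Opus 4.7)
The plan is to verify that $\res_p \ind_p$ commutes with each generator $\res_q$ and $\ind_q$ of $A_{P,\mathcal{M}}$ for every $q \in P$. I split the argument into the same-prime case $q = p$ and the distinct-prime case $q \ne p$.

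For $q = p$, I would establish the two identities $\res_p \ind_p \res_p = \res_p \res_p \ind_p$ and $\res_p \ind_p \ind_p = \ind_p \res_p \ind_p$ in $A_{P,\mathcal{M}}$. In each pair the two monomials are reorderings of one another that preserve the relative order of the $\res_p$-letters among themselves (and of the $\ind_p$-letters among themselves), so Proposition \ref{nadirprop} is the right tool provided its condition (1) holds. A direct tabulation of terminal subwords shows that all four monomials admit a total nadir with respect to $P$: for $\res_p \ind_p \res_p$ and $\res_p \res_p \ind_p$ the full word realises nadir $-1$ in the coordinate $p$ and $0$ in every other coordinate, while for $\res_p \ind_p \ind_p$ and $\ind_p \res_p \ind_p$ the empty terminal subword does the job. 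Hence condition (1) of Proposition \ref{nadirprop} is met and both equalities follow.

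For $q \ne p$, I would appeal to Lemma \ref{mixrel}, which rewrites
\[
\res_p \ind_p \;=\; p \;+\; \frac{p-1}{q-1}\bigl(\res_q \ind_q - q\bigr)
\]
inside $A_{P,\mathcal{M}}$. Thus $\res_p \ind_p$ is an affine combination of $1$ and $\res_q \ind_q$, and since scalars are central, $\res_p \ind_p$ will commute with $\res_q$ (respectively $\ind_q$) as soon as $\res_q \ind_q$ does. But the latter is exactly the same-prime case with $q$ in place of $p$, already handled above. Combining both cases, $\res_p \ind_p$ commutes with every generator and therefore lies in $Z(A_{P,\mathcal{M}})$; Proposition \ref{relsprop} enters only in the background, underpinning the proof of Proposition \ref{nadirprop}.

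I do not anticipate a serious obstacle: the only genuine computation is the terminal-subword tabulation for the four monomials of the first step, which is routine. The one conceptual point worth flagging is \emph{why} Lemma \ref{mixrel} cannot be bypassed. A naive attempt to commute $\res_p \ind_p$ past $\res_q$ by sliding $\ind_p$ across $\res_q$ would require $\ind_p \res_q = \res_q \ind_p$ in $A_{P,\mathcal{M}}$, which typically fails: the two monomials have different total-nadir profiles (only one of them admits a total nadir in general), so neither clause of Proposition \ref{nadirprop} applies, and indeed on a simple $D_{2p}$-module the two sides take different values. Reducing $\res_p \ind_p$ to the same-prime case via \ref{mixrel} is therefore the essential move.
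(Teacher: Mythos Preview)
Your argument for the case $q=p$ misapplies Proposition~\ref{nadirprop}. That proposition allows you to reorder the letters of a monomial only if, for each fixed prime $r\in P$, the subword consisting of \emph{all} $r$-letters (both $\res_r$ and $\ind_r$ together, in their original relative order) is preserved. It does \emph{not} merely ask that the $\res_r$-letters stay in order among themselves and the $\ind_r$-letters among themselves. In your pair $\res_p\ind_p\res_p$ versus $\res_p\res_p\ind_p$, every letter carries the prime $p$, so the $p$-subwords are the full words themselves, and these are different. Proposition~\ref{nadirprop} therefore says nothing here; indeed, its proof rests on Proposition~\ref{somecommprop}, which only ever swaps letters with \emph{distinct} primes. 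Since your treatment of $q\ne p$ then feeds back into this broken same-prime case, the whole argument collapses.

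The paper's proof runs the reduction in the opposite direction, and this is exactly what avoids the trap. For $q\ne p$, each of $\res_p\ind_p\res_q$, $\res_q\res_p\ind_p$, $\res_p\ind_p\ind_q$, $\ind_q\res_p\ind_p$ involves two primes; the $p$-subword is $\res_p\ind_p$ in every case, the $q$-subword is $\res_q$ (respectively $\ind_q$) in every case, and all four monomials have total nadirs. Hence Proposition~\ref{nadirprop} applies directly and gives commutation with $\res_q$ and $\ind_q$. For $q=p$, the paper first enlarges $P$ to contain another odd prime $p'$ (harmless, since the desired commutator is an operator identity on $\mathcal{M}$) and then uses Lemma~\ref{mixrel} to rewrite $\res_p\ind_p$ as an affine expression in $\res_{p'}\ind_{p'}$, reducing to the cross-prime case already handled. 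Your instinct that Lemma~\ref{mixrel} is the essential move was correct; the point is that it is needed to escape the same-prime case, not the cross-prime one.
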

\begin{proof}
We may without loss of generality assume that $P$ contains all odd primes. It suffices to show that $\res_p\ind_p$ commutes with $\res_q$ and $\ind_q$ for all primes $q$. If $q=p$ we may pick any odd prime $p'\ne p$ and first use Lemma \ref{mixrel} to rewrite 
\begin{equation*}
\res_p\ind_p=\frac{p-1}{p'-1}(\res_{p'}\ind_{p'}-p')+p
\end{equation*}
Thus we may assume that $q\ne p$. It is clear that $\res_q\res_p\ind_p$, $\res_p\ind_p\res_q$, $\ind_q\res_p\ind_p$ and $\res_p\ind_p\ind_q$ all have total nadirs (note that we may ignore the factor $\res_p\ind_p$ when determining whether an initial subword is a total nadir in some other; for instance, $\res_q$ and $\res_p\ind_p\res_q$ are both total nadirs in $\res_p\ind_p\res_q$), so the result now follows from Proposition \ref{nadirprop}. 
\end{proof}

\begin{mylem}
\label{nadirendlem}
Let $P$ be some set of odd primes and let $\mathcal{M}\subset\mathcal{G}$ be any $A_{P,\mathcal{G}}$-submodule spanned by $D_{2n}$-modules with odd $n$, such that for each fixed $n$ either all or none of the simple $D_{2n}$-modules lie in $\mathcal{M}$. Let $p\in P$, and let $z\in A_{p,\mathcal{M}}\subset A_{P,\mathcal{M}}$. Let further $d_p$ be the nadir and $e_p$ be the terminus of $z$ with respect to $p$. Let $p_1\in P$ be arbitrary. Then the following hold.
\begin{enumerate}
\item[$($i$)$]
If $d_p=0$, then $z$ may be written as a linear combination of monomials of the form 
\begin{equation*}
(\res_{p_1}\ind_{p_1})^t\ind_p^l
\end{equation*}
with $t\in\{ 0,1\}$ and $l\in\N$. In particular, the empty subword is a nadir in $z$ with respect to $p$.
\item[$($ii$)$]
If $d_p=e_p$, then $z$ may be written as a linear combination of monomials of the form 
\begin{equation*}
(\res_{p_1}\ind_{p_1})^t\res_p^k
\end{equation*}
with $t\in\{ 0,1\}$ and $k\in\N$. In particular, $z$ is a nadir in $z$ with respect to $p$. 
\item[$($iii$)$]
If $0\ne d_p\ne e_p$, then $z$ may be written as a linear combination of monomials of the form 
\begin{equation*}
(\res_{p_1}\ind_{p_1})^t\ind_p^l\res_p^k
\end{equation*}
with $k,l\in\Z_{>0}$. In particular, neither the empty subword nor $z$ is a nadir in $z$ with respect to $p$. 
\end{enumerate}
\end{mylem}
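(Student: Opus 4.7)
The strategy has two stages: first establish a normal form for $z$ in $\res_p,\ind_p$ alone using the single-prime relations, then translate via Lemma \ref{mixrel} to bring in $\res_{p_1}\ind_{p_1}$. The workhorses are that $\res_p\ind_p$ is central (Corollary \ref{ccor}) and that, combining Lemma \ref{rellem} with centrality, $(\res_p\ind_p)^{2} = \res_p^{2}\ind_p^{2} = (p+1)\res_p\ind_p - p$ in $A_{P,\mathcal{M}}$.

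Let $w\in A_p$ be a monomial whose image in $A_{P,\mathcal{M}}$ is $z$, of length $n$. I claim, by induction on $n$, that $z$ is a linear combination of monomials $(\res_p\ind_p)^{s}\ind_p^{l}\res_p^{k}$ with $s,l,k\in\N$. The base cases are trivial. For $w=\ind_p w'$, apply the hypothesis to $w'$ and use centrality to slide $\ind_p$ past each $(\res_p\ind_p)^{s_j}$. For $w=\res_p w'$, apply the hypothesis and similarly slide $\res_p$ past each $(\res_p\ind_p)^{s_j}$, reducing to terms of the form $(\res_p\ind_p)^{s_j}\res_p\ind_p^{l_j}\res_p^{k_j}$; the inner subword $\res_p\ind_p^{l_j}\res_p^{k_j}$ equals either $\res_p^{k_j+1}$ (when $l_j=0$) or $(\res_p\ind_p)\ind_p^{l_j-1}\res_p^{k_j}$ (when $l_j\geq 1$), both of the desired form. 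Finally, iterating the quadratic relation $(\res_p\ind_p)^{2}=(p+1)\res_p\ind_p-p$ collapses each $(\res_p\ind_p)^{s}$ with $s\geq 2$ to an affine combination of $1$ and $\res_p\ind_p$, so one may take $s\in\{0,1\}$.

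Each such normal-form monomial $(\res_p\ind_p)^{t}\ind_p^{l}\res_p^{k}$ has terminus $l-k$ and nadir $-k$ with respect to $p$, independently of $t$. By Lemma \ref{deplem}(i) every minimal linear dependence in $A_{P,\mathcal{M}}$ among monomials involves monomials of a common (nadir, terminus) bi-grade; iterating this decomposition shows that the ideal of relations is homogeneous with respect to this bi-grade. Hence the part of the normal-form expression for $z$ whose bi-grade differs from $(d_p,e_p)$ already vanishes in $A_{P,\mathcal{M}}$ and may be discarded, leaving only terms with $k=-d_p$ and $l=e_p-d_p$. Now apply Lemma \ref{mixrel} to substitute $\res_p\ind_p=\tfrac{p-1}{p_1-1}(\res_{p_1}\ind_{p_1}-p_1)+p$, which converts each $(\res_p\ind_p)^{t}$ with $t\in\{0,1\}$ into an affine combination of $1$ and $\res_{p_1}\ind_{p_1}$. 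Specializing to the three cases yields the claimed normal forms: $k=0$ in (i), $l=0$ in (ii), and $k,l>0$ in (iii). The ``in particular'' clauses then follow from the definitions: the empty right factor has count $0$ and so is a nadir iff $d_p=0$, while the full word has count $e_p$ and so is a nadir iff $d_p=e_p$.

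The main subtlety will be justifying the grading step in paragraph 3 — that the off-grade portion of the normal-form expression vanishes termwise, rather than merely cancelling against the on-grade portion. The argument requires iteratively extracting minimal linearly dependent subsets, each bi-grade-homogeneous by Lemma \ref{deplem}(i), in order to obtain a homogeneous basis of the ideal of relations; everything else is mechanical once the centrality and quadratic relations are in hand.
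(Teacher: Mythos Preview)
Your argument is correct. The only divergence from the paper's (one-line) proof is your detour through Lemma~\ref{deplem} to establish that the off-grade portion of the normal-form expression vanishes. The paper cites only Lemma~\ref{rellem}, Lemma~\ref{mixrel}, and Corollary~\ref{ccor}, which signals a more direct route: the rewriting moves themselves preserve both nadir and terminus with respect to $p$, so no off-grade terms are ever produced and the appeal to Lemma~\ref{deplem} is unnecessary.

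Concretely, commuting a block $\res_p\ind_p$ past an adjacent letter (the move furnished by Corollary~\ref{ccor}) leaves the running suffix-count unchanged outside a three-letter window and, inside it, only inserts an upward bump (the block, read right to left, contributes $+1$ then $-1$), so the global minimum is untouched. Likewise in $u\,\res_p^{2}\ind_p^{2}\,v = (p+1)\,u\,\res_p\ind_p\,v - p\,uv$, all three monomials share the same nadir and terminus, since each middle block is a nonnegative excursion starting and ending at the same level. Hence your inductive rewriting of the monomial $w$ into terms $(\res_p\ind_p)^{t}\ind_p^{l}\res_p^{k}$ already forces $k=-d_p$ and $l=e_p-d_p$ throughout, and the three cases drop out immediately after the substitution from Lemma~\ref{mixrel}. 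Your route via Lemma~\ref{deplem} is valid but invokes a substantially deeper result; the direct bi-grade preservation is what the paper's citation list points to, and it also dissolves the subtlety you flag in your final paragraph.
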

\begin{proof}
The result follows immediately from Lemma \ref{rellem}, Lemma \ref{mixrel}, and Corollary \ref{ccor}. 
\end{proof}

\subsection{A basis for \texorpdfstring{$A_{P,\mathcal{M}}$}{APM}}

We may now describe a basis for our algebra $A_{P,\mathcal{M}}$. Following Theorem \ref{basisthm} is an example which illustrates the proof in some very specific cases.
\begin{mythm}
\label{babybasisthm}
Let $P$ be a set of odd primes. Let also $\mathcal{M}\subset\mathcal{G}$ be some $A_P$-submodule spanned by simple $D_{2n}$-modules with $n$ odd and such that for each fixed $n$ either all or none of the simple $D_{2n}$-modules lie in $\mathcal{M}$, and furthermore such that there is no simple $D_{2n}$-module in $\mathcal{M}$ with all prime factors of $n$ belonging to $P$. Fix any total order on $P$ (say the restriction of the usual one on $\N$) and index the elements of $P$ by $p_i<p_j$ with $i,j\in\lb 1,|P|\rb$ if and only if $i<j$. Then the monomials of the forms
\begin{equation*}
(\res_{p_1}\ind_{p_1})^t\ind_{p_1}^{l_1}\dots\ind_{p_{|P|}}^{l_{|P|}}\res_{p_1}^{k_1}\dots\res_{p_{|P|}}^{k_{|P|}}
\end{equation*}
with $t\in\{ 0,1\}$ and $k_i,l_i\in\N$ form a basis of $A_{P,\mathcal{M}}$, and the relations of $A_{P,\mathcal{M}}$ are generated by the following ones.
\begin{enumerate}
\item[$($i$)$]
$\res_p\res_q=\res_q\res_p$.
\item[$($ii$)$]
$\ind_p\ind_q=\ind_q\ind_p$. 
\item[$($iii$)$]
$\ind_q\res_p=\res_p\ind_q$,\\
for $p\ne q$.
\item[$($iv$)$]
$\res_p\res_p\ind_p\ind_p=(p+1)\res_p\ind_p-p$.
\item[$($v$)$]
$\frac{1}{p-1}(\res_p\ind_p-p)=\frac{1}{q-1}(\res_q\ind_q-q)$.
\end{enumerate}
\end{mythm}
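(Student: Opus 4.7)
The plan is to prove the theorem in three stages: first verify that (i)--(v) hold in $A_{P,\mathcal{M}}$; second, show that the stated monomials span $A_{P,\mathcal{M}}$ using only these relations; and third, show they are linearly independent in $A_{P,\mathcal{M}}$. The first two stages together give a surjection from the algebra presented by (i)--(v) onto $A_{P,\mathcal{M}}$ that sends a spanning set to the claimed monomials; the third stage promotes this surjection to an isomorphism and the claimed monomials to a basis. Relations (i) and (ii) come from Proposition \ref{relsprop}; (iv) is Lemma \ref{rellem}; (v) is Lemma \ref{mixrel}. For (iii), I would apply Proposition \ref{nadirprop} under its second hypothesis---which is our standing assumption on $\mathcal{M}$---to the pair $\ind_q\res_p$ and $\res_p\ind_q$ with $p \ne q$, since this transposition preserves the per-prime relative order of factors.

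For spanning, start with an arbitrary monomial $z \in A_P$ and apply (i)--(iii) to group the $\res_p$- and $\ind_p$-factors for each prime $p$ into a single contiguous block, then order these blocks by the chosen order on $P$. Within each single-prime block, Lemma \ref{nadirendlem}---whose proof uses (iv), (v), and centrality of $\res_{p_1}\ind_{p_1}$ from Corollary \ref{ccor}---rewrites the block as a linear combination of $(\res_{p_1}\ind_{p_1})^{t_p}\ind_p^{l_p}\res_p^{k_p}$ with $t_p \in \{0,1\}$. Multiplying the blocks together and using (i)--(iii) with centrality of $\res_{p_1}\ind_{p_1}$ gathers $\ind$'s on the left, $\res$'s on the right, and pools the central factors into a single power $(\res_{p_1}\ind_{p_1})^T$. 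Finally, (iv) combined with centrality yields $(\res_{p_1}\ind_{p_1})^2 = (p_1+1)\res_{p_1}\ind_{p_1} - p_1$, so every $T \ge 2$ reduces inductively to $T \in \{0,1\}$.

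For linear independence, suppose a finite linear combination of distinct basis monomials vanishes in $A_{P,\mathcal{M}}$. By Lemma \ref{deplem}(i) all summands must share termini and nadirs with respect to every prime in $P$. A direct count on the normal form shows that the basis monomial with parameters $(t; l_1, k_1, \ldots, l_{|P|}, k_{|P|})$ has terminus $l_i - k_i$ and nadir $-k_i$ with respect to $p_i$, so each $l_i$ and $k_i$ is forced and only $t \in \{0,1\}$ remains free. The combination thus reduces to $(a + b\,\res_{p_1}\ind_{p_1})\cdot m = 0$ in $A_{P,\mathcal{M}}$ with fixed tail $m = \ind_{p_1}^{l_1}\cdots\res_{p_{|P|}}^{k_{|P|}}$. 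To rule this out, I would find some $V_{1,1}(n_0) \in \mathcal{M}$ with $n_0$ having a prime factor outside $P$, whose existence follows from $\mathcal{M}$ being a nonzero $A_P$-submodule consisting of full $n$-layers of simple modules; then $M := V_{1,1}(n_0\prod_i p_i^{k_i})$ lies in $\mathcal{M}$ by inducing from $V_{1,1}(n_0)$ combined with the all-or-none condition. The module $mM$ contains $V_{1,1}(n_0\prod_i p_i^{l_i})$ with nonzero multiplicity, whereas all other simple summands of $mM$ are $W_k(n)$-modules with $k \notin \Z n$, which stay in the $W$-sector under $\res_{p_1}\ind_{p_1}$. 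Since $\res_{p_1}\ind_{p_1}V_{1,1}(n) = \tfrac{p_1+1}{2}V_{1,1}(n) + \tfrac{p_1-1}{2}V_{1,-1}(n)$ by Proposition \ref{resindprop}, the coefficient of $V_{1,-1}(n_0\prod_i p_i^{l_i})$ in $(a+b\,\res_{p_1}\ind_{p_1})mM$ is a nonzero multiple of $b$, forcing $b = 0$; then $a = 0$ follows from the coefficient of $V_{1,1}(n_0\prod_i p_i^{l_i})$.

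The main obstacle is this last step: carefully verifying the existence of a witnessing $V_{1,1}(n_0) \in \mathcal{M}$ and that iteratively inducing from it produces the required modules within $\mathcal{M}$. The all-or-none condition on $n$-layers is key here, together with the connected-component structure of Figure 1 which ensures that $V_{1,1}$ and $V_{1,-1}$ coexist in every $V$-containing $n$-layer. The rest of the argument is essentially bookkeeping with the relations and the terminus/nadir invariants supplied by Lemma \ref{deplem}.
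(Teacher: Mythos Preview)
Your proof is correct and follows the same architecture as the paper's: relations (i)--(v) are identified via Propositions~\ref{relsprop}, \ref{nadirprop} and Lemmata~\ref{rellem}, \ref{mixrel}; spanning is obtained from Proposition~\ref{nadirprop} together with Lemma~\ref{nadirendlem} and centrality of $\res_{p_1}\ind_{p_1}$; and linear independence is reduced via Lemma~\ref{deplem}(i) to a single relation $(a+b\,\res_{p_1}\ind_{p_1})m=0$, which is then killed by evaluating on a suitable module. The paper delegates the last step to the proof of Theorem~\ref{basisthm} ``mutatis mutandis'', where it first tests on $W_1(n)$ to force $c=p_1$ and then uses an invariance argument on $V_{1,1}$-versus-$V_{1,-1}$ coefficients; your version is a clean specialisation to the simpler normal form here, testing directly on $V_{1,1}(n_0\prod_i p_i^{k_i})$ and reading off $b=0$ from the $V_{1,-1}$-coefficient and then $a=0$ from the $V_{1,1}$-coefficient. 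Both routes are equivalent in spirit, and your explicit construction of the witness module (using the all-or-none hypothesis and the extra prime factor of $n_0$ outside $P$) makes transparent exactly what the paper's ``mutatis mutandis'' entails.
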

\begin{proof}
That the monomials $(\res_{p_1}\ind_{p_1})^t\ind_{p_1}^{l_1}\dots\ind_{p_{|P|}}^{l_{|P|}}\res_{p_1}^{k_1}\dots\res_{p_{|P|}}^{k_{|P|}}$ span $A_{P,\mathcal{M}}$ follows readily from Proposition \ref{nadirprop} and Lemma \ref{nadirendlem}. For the proof of linear independence, we refer to the first part of the proof of linear independence for Theorem \ref{basisthm}, which applies mutatis mutandis here too (note that while the proof of Theorem \ref{basisthm} refers to the present theorem, it does so only in the final paragraph of the linear independence proof, so there is no circularity). Since every $z\in A_{P,\mathcal{M}}$ can be written as a linear combination of the basis elements using the relations (i)-(v) (via Proposition \ref{nadirprop}, Lemma \ref{rellem}, Corollary \ref{ccor}, and Lemma \ref{nadirendlem}), these relations indeed generate all relations of $A_{P,\mathcal{M}}$. 
\end{proof}
\begin{mythm}
\label{basisthm}
Let $P$ be a set of odd primes. Let also $\mathcal{M}\subset\mathcal{G}$ be some $A_P$-submodule spanned by simple $D_{2n}$-modules with $n$ odd and such that for each fixed $n$ either all or none of the simple $D_{2n}$-modules lie in $\mathcal{M}$, and furthermore such that there is a simple $D_{2n}$-module in $\mathcal{M}$ with all prime factors of $n$ belonging to $P$. Fix any total order on $P$ (say the restriction of the usual one on $\N$) and index the elements of $P$ by $p_i<p_j$ with $i,j\in\lb 1,|P|\rb$ if and only if $i<j$. Then the monomials of the forms
\begin{enumerate}
\item[$($i$)$]
$(\res_{p_1}\ind_{p_1})^t\ind_{p_1}^{l_1}\dots\ind_{p_{|P|}}^{l_{|P|}}\res_{p_1}^{k_1}\dots\res_{p_{|P|}}^{k_{|P|}}$\\
with $t\in\{ 0,1\}$, and $k_i,l_i\in\N$,
\item[$($ii$)$]
$(\res_{p_1}\ind_{p_1})^t\ind_{p_1}^{l_1}\res_{p_1}^{k_1}\dots\ind_{p_{|P|}}^{l_{|P|}}\res_{p_{|P|}}^{k_{|P|}}$\\
with $t\in\{ 0,1\}$, and $k_i,l_i\in\N$ such that $k_i\ne 0\ne l_i$ for at least two choices of $i$,
\item[$($iii$)$]
$(\res_{p_1}\ind_{p_1})^t\res_{p_i}^k\ind_{p_j}^l$\\
with $t\in\{ 0,1\}$, with $i\ne j$, and $k,l\in\Z_{>0}$,
\item[$($iv$)$]
$(\res_{p_1}\ind_{p_1})^t\res_{p_{i\text{ $($mod }|P|)+1}}\ind_{p_i}^l\res_{p_i}^k\ind_{p_{i\text{ $($mod }|P|)+1}}$\\
with $t\in\{ 0,1\}$, and $k,l\in\Z_{>0}$,
\item[$($v$)$]
$(\res_{p_1}\ind_{p_1})^t\ind_{p_j}^l\res_{p_j}^k\ind_{p_1}^{l_1}\dots\ind_{p_{|P|}}^{l_{|P|}}$\\
with $t\in\{ 0,1\}$, with $j\in\lb 1,|P|\rb$, with $k,l\in\Z_{>0}$, and $l_i\in\N$ such that $l_j=0$ but $l_i\ne 0$ for at least one $i$,
\item[$($vi$)$]
$(\res_{p_1}\ind_{p_1})^t\res_{p_1}^{k_1}\dots\res_{p_{|P|}}^{k_{|P|}}\ind_{p_j}^l\res_{p_j}^k$\\
with $t\in\{ 0,1\}$, with $j\in\lb 1,|P|\rb$, with $k,l\in\Z_{>0}$, and $k_i\in\N$ such that $k_j=0$ but $k_i\ne 0$ for at least one $i$,
\end{enumerate}
form a basis of $A_{P,\mathcal{M}}$, and the relations of $A_{P,\mathcal{M}}$ are generated by the following ones.
\begin{enumerate}
\item[$($i$)$]
$\res_p\res_q=\res_q\res_p$.
\item[$($ii$)$]
$\ind_p\ind_q=\ind_q\ind_p$. 
\item[$($iii$)$]
$z_1=z_1$,\\
where $z_2$ is the result of reordering the factors of $z_1$ in a way such that the relative order of factors $\res_p$ and $\ind_p$ for each fixed $p\in P$ is unchanged, and where either both or none of $z_1$ and $z_2$ has a total nadir. 
\item[$($iv$)$]
$\res_p\res_p\ind_p\ind_p=(p+1)\res_p\ind_p-p$.
\item[$($v$)$]
$\frac{1}{p-1}(\res_p\ind_p-p)=\frac{1}{q-1}(\res_q\ind_q-q)$.
\end{enumerate}
\end{mythm}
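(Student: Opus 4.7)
The plan is to establish in turn (a) that the six listed families span $A_{P,\mathcal{M}}$, (b) that they are linearly independent, and (c) that relations (i)--(v) generate all relations. For spanning, given an arbitrary monomial $z\in A_P$, I would first apply relation (iii), which by Proposition \ref{nadirprop} permits reordering factors in any way that preserves both the relative order of $\res_p$ and $\ind_p$ for each fixed $p$ and the existence or nonexistence of a total nadir. If $z$ has a total nadir, this allows pushing every $\ind_q$ to the left of every $\res_q$, landing in shape (i); if $z$ has no total nadir, a case analysis on the nadir profile shows the result fits into exactly one of the shapes (ii)--(vi), distinguished by the specific interleaving of $\ind_p$'s and $\res_p$'s that obstructs a total nadir. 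Throughout, one uses Lemma \ref{rellem} and Lemma \ref{nadirendlem} to collapse $\res_p^2\ind_p^2$ blocks, Lemma \ref{mixrel} to identify the $\res_p\ind_p$ across primes, and Corollary \ref{ccor} to move the central element $c_{p_1}:=\res_{p_1}\ind_{p_1}$ to the front. Combining (iv) with centrality gives $c_{p_1}^2=(p_1+1)c_{p_1}-p_1$, hence $(c_{p_1}-1)(c_{p_1}-p_1)=0$, which forces the prefix exponent into $\{0,1\}$ and produces the $(\res_{p_1}\ind_{p_1})^t$-factor.

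For linear independence, suppose $\sum_i \alpha_i z_i=0$ is a minimal nontrivial linear dependence among basis monomials. By Lemma \ref{deplem}(i), all $z_i$ share the same terminus and nadir with respect to each $p\in P$; by Lemma \ref{deplem}(ii), either all or none of them have a total nadir. Reading these data off the six basis forms pins down the type and the exponents $k_i,l_i,k,l$, leaving only the freedom in the prefix $t\in\{0,1\}$. To distinguish the $t=0$ and $t=1$ variants, use Proposition \ref{resindprop} to compute directly that $c_{p_1}$ acts as multiplication by $p_1$ on every two-dimensional simple $W_k(n)$ with $p_1\nmid k$ and as multiplication by $1$ on the combination $V_{1,1}(n)-V_{1,-1}(n)$; choosing a simple $L\in\mathcal{M}$ whose image $z_iL$ meets both eigenspaces of $c_{p_1}$ then shows that the two variants cannot cancel. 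Having established both spanning and linear independence via the relations (i)--(v), any relation of $A_{P,\mathcal{M}}$ rewrites to an equality between linear combinations of basis monomials, which must be trivial; hence (i)--(v) generate the full set of relations.

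The hardest step will be the case analysis inside the spanning argument that identifies (ii)--(vi) as the list of no-total-nadir normal forms. One must verify that after exhausting the reorderings of Proposition \ref{nadirprop} and the internal reductions via Lemmata \ref{rellem}--\ref{nadirendlem}, every no-total-nadir monomial lands in exactly one of the five patterns, and that the patterns do not coincide modulo (i)--(v). This is essentially a combinatorial analysis of a partially commutative word with the extra obstruction imposed by the total-nadir condition, and it is where the seemingly intricate list of shapes has to be shown to be both complete and non-redundant.
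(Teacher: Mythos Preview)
Your proposal is correct and follows essentially the same route as the paper. The paper organizes the no-total-nadir case analysis via the auxiliary notions ``$z$ starts on a $p$-edge'' (meaning $d_p\ne 0$ while $d_q=0$ for $q\ne p$) and ``$z$ ends on a $p$-edge'' (meaning $d_p\ne e_p$ while $d_q=e_q$ for $q\ne p$), which gives the five subcases corresponding to (ii)--(vi) cleanly; and for linear independence it first treats the case where \emph{every} $n$ occurring in $\mathcal{M}$ has all its prime factors in $P$, and then reduces the general case to this together with Theorem~\ref{babybasisthm} via the splitting $\mathcal{M}\cong\mathcal{N}\oplus\mathcal{N}'$. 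Your eigenspace argument for distinguishing $t=0$ from $t=1$ is the same idea as the paper's (which computes on $W_1(n)$ to pin down $c=p_1$, then on $V_{1,1}(n)$ to derive a contradiction), just phrased more symmetrically.
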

\begin{proof}
We will use Lemma \ref{nadirendlem} and also use the notation $d_p$ and $e_p$ from that lemma. Let us first show that an arbitrary monomial $z\in A_{P,\mathcal{M}}$ can be written as a linear combination of monomials of forms in the theorem statement. For $i\in \lb 1,|P|\rb$, let $z_i$ be the maximal subword of $z$ consisting entirely of factors $\res_{p_i}$ and $\ind_{p_i}$. 

First consider the case when there is a total nadir in $z$. We may then write $z=z'z''$, where $z''$ is a total nadir in $z$. Let $z'_i$ be the maximal subword of $z'$ consisting entirely of factors $\res_{p_i}$ and $\ind_{p_i}$, and $z''_i$ be the maximal subword of $z''$ consisting entirely of factors $\res_{p_i}$ and $\ind_{p_i}$. By Proposition \ref{nadirprop} we have 
\begin{equation*}
z=z'_1\dots z'_{|P|}z''_1\dots z''_{|P|}.
\end{equation*}
Now apply Lemma \ref{nadirendlem} to write each $z'_i$ as a linear combination of monomials of the form $(\res_{p_1}\ind_{p_1})^t\ind_{p_i}^l$ with $t\in\{0,1\}$ and $l\in\N$ depending on $i$, and also write each $z''_i$ as a linear combination of monomials of the form $(\res_{p_1}\ind_{p_1})^t\res_{p_i}^k$ with $t\in\{0,1\}$ and $k\in\N$ depending on $i$. Now apply Lemma \ref{rellem} and Corollary \ref{ccor} to see that $z$ may be written as a linear combination of monomials of the form (i).

Next we consider several cases where there is no total nadir in $z$. Note that this in particular may only be the case if $|P|>1$. Since there is no total nadir in $z$, we can not have that $d_p=0$ for all $p\in P$, and neither that $d_p=e_p$ for all $p\in P$. Let us say that $z$ \emph{starts on a $p$-edge} if $d_p\ne 0$ while $d_q=0$ for $q\ne p$, and let us say that $z$ \emph{ends on a $p$-edge} if $d_p\ne e_p$ while $d_q=e_q$ for $q\ne p$. 

First consider the case where $z$ neither starts nor ends on a $p$-edge for any $p\in P$. Then by part (iii) of Lemma \ref{nadirendlem}, we have for at least two values of $i$ that there is a nadir in $z_i$ with respect to $p$ which equals neither the empty word nor the entire $z_i$. Similarly to above we then have by Proposition \ref{nadirprop} that
\begin{equation*}
z=z_1\dots z_{|P|},
\end{equation*}
where the $z_i$ may be written as a linear combination of monomials as in parts (i)-(iii) of Lemma \ref{nadirendlem}, with part (iii) being the case for at least two different values of $i$. As above, apply Lemma \ref{rellem} and Corollary \ref{ccor} to see that $z$ may be written as a linear combination of monomials of the form (ii) in the theorem statement.  

Next consider the case where $z$ starts on a $p_i$-edge and ends on a $p_j$-edge, where $i\ne j$. Then by parts (i) and (ii) of Lemma \ref{nadirendlem}, we have that $z_j$ is not a nadir in $z_j$ with respect to $p_j$ and that the empty subword is not a nadir in $z_i$ with respect to $p_i$. Thus we have by Proposition \ref{nadirprop} that
\begin{equation*}
z=z'z_iz_j,
\end{equation*}
where $z'$ is any rearrangement of the factors in $z$ that do not lie in $z_i$ or $z_j$. Note that by Lemma \ref{nadirendlem} we have that $z'$ can be written as a linear combination of elements of the form $(\res_{p_1}\ind_{p_1})^t$ with $t\in\{0,1\}$, that $z_i$ can be written as a linear combination of elements of the form $(\res_{p_1}\ind_{p_1})^t\res_{p_i}^k$ with $t\in\{0,1\}$ and $k$ depending on $i$, and finally that $z_j$ can be written as a linear combination of elements of the form $(\res_{p_1}\in_{p_1})^t\ind_{p_j}^l$ with $t\in\{0,1\}$ and $l$ depending on $j$. As before, apply Lemma \ref{rellem} and Corollary \ref{ccor} to see that $z$ may be written as a linear combination of monomials of the form (iii) in the theorem statement. 

Next consider the case where $z$ starts and ends on a $p_i$-edge. Let $z=z'z''$ where $z''$ is a nadir in $z$ with respect to $p_i$. Then the empty subword and $z$ are both nadirs in $z$ with respect to any $p_j$ with $j\ne i$. Because there is no total nadir in $z$, we must then have that $z''$ contains a factor $\ind_q$ for some $q\ne p_i$, and that $z'$ contains the factor $\res_q$. 
%Use that z' and z'' may be written as (iii), (v) or (vi)?
Using Proposition \ref{nadirprop}, Lemma \ref{rellem} and Corollary \ref{ccor} we may write $z$ as a linear combination of monomials of the form $(\res_{p_1}\ind_{p_1})^t\res_q\ind_p^l\res_p^k\ind_q$. By Lemma \ref{nadirendlem} we may assume that $k,l> 0$, and by Lemma \ref{rellem} together with Proposition \ref{nadirprop} we may assume that $q$ is any prime different from $p_i$, say $q=p_{i\text{ $($mod }|P|)+1}$, obtaining the form stated in part (iv).

Next consider the case where $z$ starts on a $p_j$-edge but does not end on a $p$-edge for any $p\in P$. By parts (i) and (iii) of Lemma \ref{nadirendlem} together with Proposition \ref{nadirprop}, we may write $z$ as a linear combination of monomials of the form stated in part (v), which have no total nadir because $\res_{p_j}^k\ind_{p_1}^{l_1}\dots\ind_{p_{|P|}}^{l_{|P|}}$ is the unique nadir in $(\res_{p_1}\ind_{p_1})^t\ind_{p_j}^l\res_{p_j}^k\ind_{p_1}^{l_1}\dots\ind_{p_{|P|}}^{l_{|P|}}$ with respect to $p_j$, and this can not be a nadir with respect to every other $p_i$. 

Finally consider the case where $z$ ends on a $p_j$-edge but does not start on a $p$-edge for any $p\in P$. By parts (ii) and (iii) of Lemma \ref{nadirendlem} together with Proposition \ref{nadirprop}, we may write $z$ as a linear combination of monomials of the form stated in part (vi), which similarly to the monomials in case (v) have no total nadir. 

Now for linear independence. Let us first consider the case where every (rather than just some) simple $D_{2n}$-module $M$ in $\mathcal{M}$ satisfies that every prime factor of $n$ lies in $P$. It is clear from Lemma \ref{nadirlem} that the monomials of the forms (i)-(vi) are nonzero. From Lemma \ref{deplem} we may immediately rule out all linear dependences except for ones of the forms 
\begin{equation*}
cz=\res_{p_1}\ind_{p_1}z,
\end{equation*}
where $z$ is of one of the forms (i)-(vi) and $c\in\C$. Assume towards a contradiction that there exist some such $z$ and $c$. In particular we must have 
\begin{equation*}
(c-\res_{p_1}\ind_{p_1})zW_1(n)=0
\end{equation*}
for arbitrary $n$ satisfying $W_1(n)\in\mathcal{M}$. Note from Proposition \ref{resindprop} that $zW_1(n)$ for some $n$ chosen using Lemma \ref{nadirlem} is a nonzero linear combination of modules of the form $W_{k'}(n')$ (for some fixed $n'$ and various $k'$), and by the same proposition that $\res_{p_1}\ind_{p_1}W_{k'}(n')=p_1W_{k'}(n')$, so that we must have $c=p_1$. Similarly, we have $\res_{p_1}\ind_{p_1}(V_{a,b}(n')+V_{a,-b}(n'))=p_1(V_{a,b}(n')+V_{a,-b}(n'))$. 

Again from Proposition \ref{resindprop} we see that the property that $M\in\mathcal{M}$ with coefficients in $\N$ (with respect to the natural basis of simple dihedral group modules) has a larger $V_{1,1}(m)$-coefficient than $V_{1,-1}(m)$-coefficient for every $m$ where at least one of the coefficients is nonzero is invariant under the action of any monic monomial in $A_P$. Combining this with the above paragraph, we get that 
\begin{equation*}
(p_1-\res_{p_1}\ind_{p_1})zV_{1,1}(n)=c'(p_1-\res_{p_1}\ind_{p_1})V_{1,1}(n')
\end{equation*}
for some $c'\in\C^*$ and some $n'$. However, we may by direct computation verify that $(p_1-\res_{p_1}\ind_{p_1})V_{1,1}(n')=\frac{p_1-1}{2}(V_{1,1}(n')-V_{1,-1}(n'))\ne 0$, contradicting 
\begin{equation*}
(p_1-\res_{p_1}\ind_{p_1})zV_{1,1}(n)=0.
\end{equation*} 

Since every $z\in A_{P,\mathcal{M}}$ can be written as a linear combination of the basis elements using the relations (i)-(v) (via Proposition \ref{nadirprop}, Lemma \ref{rellem}, Corollary \ref{ccor}, and Lemma \ref{nadirendlem}), these relations indeed generate all relations of $A_{P,\mathcal{M}}$. 

Finally consider the more general case where some (but not necessarily every) simple $D_{2n}$-module $M\in\mathcal{M}$ satisfies that every prime factor of $n$ lies in $P$. Let $\mathcal{M}\cong\mathcal{N}\oplus\mathcal{N}'$, where $\mathcal{N}\subset\mathcal{M}$ is the submodule that is spanned by those $M$ that satisfy the aforementioned condition, and $\mathcal{N}'$ is spanned by those which do not. From Lemma \ref{wloglem}, we see that $A_{P,\mathcal{N}}$ has all the relations of $A_{P,\mathcal{M}}$ but potentially additional ones as well. Any such additional relation would be of the form $z=0$ where $z$ annihilates every module in $\mathcal{N}$ but not every module in $\mathcal{N}'$. However, we have by Theorem \ref{babybasisthm} a complete description of the relations of $A_{P,\mathcal{N}'}$, from which we see that every relation of $A_{P,\mathcal{N}}$ is also a relation of $A_{P,\mathcal{N}'}$, hence of $A_{P,\mathcal{M}}$. 
\end{proof}

\begin{myex}
We will exhibit examples of how monomials may be written as a linear combination of the monomials (i)-(vi) in Theorem \ref{basisthm} by going through the steps described in the proof. Let $P=\{3,5,7\}$. 

{\bf Case (i): } \begin{align*}
&\ind_3\res_5^2\res_3\ind_5=\ind_3\res_3\res_5^2\ind_5=\ind_3\res_3\res_5(\res_5\ind_5)\\&=(\res_5\ind_5)\ind_3\res_3\res_5=(\frac{5-1}{3-1}(\res_3\ind_3-3)+5)\ind_3\res_3\res_5\\&=(2\res_3\ind_3-1)\ind_3\res_3\res_5=2(\res_3\ind_3)\ind_3\res_3\res_5-\ind_3\res_3\res_5.
\end{align*}

{\bf Case (ii): } \begin{align*}
&\ind_3\res_3\ind_5\res_3\ind_3\res_5\ind_3\res_3=\ind_3(\res_3\res_3\ind_3\ind_3)\res_3\ind_5\res_5\\&=\ind_3((3+1)\res_3\ind_3-3)\res_3\ind_5\res_5\\&=4(\res_3\ind_3)\ind_3\res_3\ind_5\res_5-3\ind_3\res_3\ind_5\res_5.
\end{align*}

{\bf Case (iii): } \begin{align*}
&\res_5\res_7\res_3\ind_5\ind_7\ind_5\\&=(\res_7\ind_7)\res_3(\res_5\ind_5)\ind_5\\&=(\res_7\ind_7)(\res_5\ind_5)\res_3\ind_5\\&=(\frac{7-1}{3-1}(\res_3\ind_3-3)+7)(\frac{5-1}{3-1}(\res_3\ind_3-3)+5)\res_3\ind_5\\&=6(\res_3\res_3\ind_3\ind_3)\res_3\ind_5-7(\res_3\ind_3)\res_3\ind_5+2\res_3\ind_5\\&=6((3+1)\res_3\ind_3-3)\res_3\ind_5-7(\res_3\ind_3)\res_3\ind_5+2\res_3\ind_5\\&=17(\res_3\ind_3)\res_3\ind_5-16\res_3\ind_5.
\end{align*}

{\bf Case (iv): } \begin{align*}
&\res_7\ind_3\res_3^2\ind_7=\frac{1}{5}((5+1)\res_5\ind_5-\res_5\ind_5\res_5\ind_5)\res_7\ind_3\res_3^2\ind_7\\&=\frac{6}{5}(\res_5\ind_5)\res_7\ind_3\res_3^2\ind_7-\frac{1}{5}(\res_5\ind_5)(\res_5\ind_5)\res_7\ind_3\res_3^2\ind_7\\&=\frac{6}{5}(\res_7\ind_7)\res_5\ind_3\res_3^2\ind_5-\frac{1}{5}(\res_5\ind_5)(\res_7\ind_7)\res_5\ind_3\res_3^2\ind_5\\&=\frac{6}{5}(\frac{7-1}{3-1}(\res_3\ind_3-3)+7)\res_5\ind_3\res_3^2\ind_5\\&-\frac{1}{5}(\frac{5-1}{3-1}(\res_3\ind_3-3)+5)(\frac{7-1}{3-1}(\res_3\ind_3-3)+7)\res_5\ind_3\res_3^2\ind_5\\&=\frac{18}{5}(\res_3\ind_3)\res_5\ind_3\res_3^2\ind_5-\frac{12}{5}\res_5\ind_3\res_3^2\ind_5\\&-\frac{6}{5}(\res_3\ind_3\res_3\ind_3)\res_5\ind_3\res_3^2\ind_5+\frac{7}{5}(\res_3\ind_3)\res_5\ind_3\res_3^2\ind_5\\&-\frac{2}{5}\res_5\ind_3\res_3^2\ind_5\\&=\frac{18}{5}(\res_3\ind_3)\res_5\ind_3\res_3^2\ind_5-\frac{12}{5}\res_5\ind_3\res_3^2\ind_5\\&-\frac{6}{5}((3+1)\res_3\ind_3-3)\res_5\ind_3\res_3^2\ind_5+\frac{7}{5}(\res_3\ind_3)\res_5\ind_3\res_3^2\ind_5\\&-\frac{2}{5}\res_5\ind_3\res_3^2\ind_5\\&=\frac{1}{5}(\res_3\ind_3)\res_5\ind_3\res_3^2\ind_5+\frac{4}{5}\res_5\ind_3\res_3^2\ind_5.
\end{align*}

{\bf Case (v): } \begin{align*}
\res_3\ind_5\ind_3\res_5\ind_3\res_5=\ind_5\res_5^2(\res_3\ind_3)\ind_3=(\res_3\ind_3)\ind_5\res_5^2\ind_3.
\end{align*}

{\bf Case (vi): } \begin{align*}
\res_3\ind_5\res_3\res_5\ind_3\res_5=\res_3(\res_3\ind_3)\ind_5\res_5^2=(\res_3\ind_3)\res_3\ind_5\res_5^2.
\end{align*}
\end{myex}

\subsection{The center, and a decomposition, of \texorpdfstring{$A_{P,\mathcal{M}}$}{APM}}

\begin{mylem}
\label{idemlem}
Let $P$ be a set of odd primes and let $\mathcal{M}\subset\mathcal{G}$ be any $A_{P}$-submodule generated by $D_{2n}$-modules with $n$ odd. Let also $q\in P$ be arbitrary. Then $A_{P,\mathcal{M}}$ contains the central idempotents
\begin{equation*}
\epsilon_1=\frac{\res_q\ind_q-1}{q-1}
\end{equation*}
and
\begin{equation*}
\epsilon_2=\frac{q-\res_q\ind_q}{q-1},
\end{equation*}
which satisfy that 
\begin{equation*}
\epsilon_1+\epsilon_2=1.
\end{equation*}
\end{mylem}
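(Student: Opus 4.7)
The plan is to verify the three claims—that $\epsilon_1+\epsilon_2=1$, that both are central, and that both are idempotent—in increasing order of difficulty, each reducing to facts already established in the excerpt. The sum identity is immediate: $\epsilon_1+\epsilon_2=\frac{(\res_q\ind_q-1)+(q-\res_q\ind_q)}{q-1}=\frac{q-1}{q-1}=1$.

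For centrality, I would invoke Corollary \ref{ccor}, which tells us that $\res_q\ind_q$ lies in the center of $A_{P,\mathcal{M}}$. Since $\epsilon_1$ and $\epsilon_2$ are $\C$-linear combinations of $\res_q\ind_q$ and $1$ (the latter is of course central), they too must lie in the center.

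For idempotency, I would reduce to showing $\epsilon_1\epsilon_2=0$: from $\epsilon_1+\epsilon_2=1$ this gives $\epsilon_1^2=\epsilon_1(1-\epsilon_2)=\epsilon_1$ and similarly $\epsilon_2^2=\epsilon_2$. Now $\epsilon_1\epsilon_2=\frac{1}{(q-1)^2}(\res_q\ind_q-1)(q-\res_q\ind_q)$, which expands to $\frac{1}{(q-1)^2}\bigl((q+1)\res_q\ind_q-(\res_q\ind_q)^2-q\bigr)$. So I need $(\res_q\ind_q)^2=(q+1)\res_q\ind_q-q$.

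The key maneuver, and the only mildly nontrivial step, is computing $(\res_q\ind_q)^2$. Using centrality of $\res_q\ind_q$ (Corollary \ref{ccor}), I can commute $\res_q\ind_q$ past the initial $\res_q$ to rewrite
\begin{equation*}
(\res_q\ind_q)(\res_q\ind_q)=(\res_q\ind_q\cdot\res_q)\ind_q=\res_q(\res_q\ind_q)\ind_q=\res_q\res_q\ind_q\ind_q.
\end{equation*}
Then Lemma \ref{rellem} gives $\res_q\res_q\ind_q\ind_q=(q+1)\res_q\ind_q-q$, which is exactly what is needed. Substituting back yields $\epsilon_1\epsilon_2=0$, completing the proof. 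There is no real obstacle here—the lemma is essentially a formal consequence of Corollary \ref{ccor} combined with Lemma \ref{rellem}, packaged into the standard spectral projections associated to the minimal polynomial $(x-1)(x-q)$ of $\res_q\ind_q$.
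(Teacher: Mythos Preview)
Your proof is correct and takes essentially the same approach as the paper: both invoke Corollary~\ref{ccor} for centrality, use that centrality to rewrite $(\res_q\ind_q)^2=\res_q\res_q\ind_q\ind_q$, and then apply Lemma~\ref{rellem}. The only cosmetic difference is that the paper computes $\epsilon_1^2=\epsilon_1$ directly, whereas you show $\epsilon_1\epsilon_2=0$ and deduce idempotency from $\epsilon_1+\epsilon_2=1$; the underlying computation is identical.
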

\begin{proof}
That $\epsilon_1$ and $\epsilon_2$ belong to the center of $A_{P,\mathcal{M}}$ is immediate from Corollary \ref{ccor}. That they are idempotents is shown by direct calculation and an application of Lemma \ref{rellem}:
\begin{align*}
\epsilon_1^2&=(\frac{\res_q\ind_q-1}{q-1})^2=\frac{(\res_q\ind_q)^2-2\res_q\ind_q+1}{q^2-2+1}=\frac{\res_q\res_q\ind_q\ind_q-2\res_q\ind_q+1}{(q-1)^2}\\&=\frac{(q+1)\res_q\ind_q-q-2\res_q\ind_q+1}{(q-1)^2}=\frac{(q-1)(\res_q\ind_q-1)}{(q-1)^2}=\epsilon_1,
\end{align*}
and similarly for $\epsilon_2$. Also that $\epsilon_1+\epsilon_2=1$ is a result of direct computation.
\end{proof}

For a set of odd primes $P$ and $\mathcal{M}\subset\mathcal{G}$ an $A_{P}$-submodule spanned by $D_{2n}$-modules with $n$ odd and furthermore such that for each fixed $n$ either all simple $D_{2n}$-modules or none belong to $\mathcal{M}$, we define algebras 
\begin{equation*}
T^1_{P,\mathcal{M}}=A_{P,\mathcal{M}}/\langle \res_{p}\ind_{p}-p\rangle
\end{equation*}
and 
\begin{equation*}
T^2_{P,\mathcal{M}}=A_{P,\mathcal{M}}/\langle \res_{p}\ind_{p}-1\rangle,
\end{equation*}
where $p\in P$ is arbitrary. That these algebras are well-defined is part of the following theorems \ref{babydecompthm} and \ref{decompthm}. 

\begin{mythm}
\label{babydecompthm}
Let $P$ be a set of odd primes and pick an arbitrary indexing of $P$ by $\lb 1,|P|\rb$. Let also $\mathcal{M}\subset\mathcal{G}$ be some $A_P$-submodule spanned by simple $D_{2n}$-modules with $n$ odd and such that for each fixed $n$ either all or none of the simple $D_{2n}$-modules lie in $\mathcal{M}$, and furthermore such that there is no simple $D_{2n}$-module in $\mathcal{M}$ with all prime factors of $n$ belonging to $P$. Then the algebras $T^1_{P,\mathcal{M}}$ and $T^2_{P,\mathcal{M}}$ do not depend on the choice of $p$, and each has a basis consisting of the monomials of the forms
\begin{equation*}
\ind_{p_1}^{l_1}\dots\ind_{p_{|P|}}^{l_{|P|}}\res_{p_1}^{k_1}\dots\res_{p_{|P|}}^{k_{|P|}}
\end{equation*}
with $k_i,l_i\in\N$, where we have identified monomials in $A_{P,\mathcal{M}}$ with their images under the natural projections $\pi_1: A_{P,\mathcal{M}}\rightarrow T^1_{P,\mathcal{M}}$ and $\pi_2: A_{P,\mathcal{M}}\rightarrow T^2_{P,\mathcal{M}}$ respectively. Furthermore we have isomorphisms
\begin{align*}
A_{P,\mathcal{M}}\xrightarrow{\sim} A_{P,\mathcal{M}}\epsilon_1&\oplus A_{P,\mathcal{M}}\epsilon_2\xrightarrow{\sim} T^1_{P,\mathcal{M}}\oplus T^2_{P,\mathcal{M}}\\
z\mapsto z\epsilon_1&\oplus z\epsilon_2\mapsto \pi_1(z)\oplus\pi_2(z),
\end{align*}
where $\epsilon_1$ and $\epsilon_2$ depend on some fixed $q\in P$, as in Lemma \ref{idemlem}.
\end{mythm}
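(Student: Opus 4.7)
The plan is to use the central idempotents $\epsilon_1,\epsilon_2$ from Lemma \ref{idemlem} to split $A_{P,\mathcal{M}}$ into a direct sum of ideals and then identify each summand with one of the quotients $T^i_{P,\mathcal{M}}$ via its universal property.

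First I would dispatch well-definedness of the two quotients. The identity $\res_p\ind_p-p=\tfrac{p-1}{q-1}(\res_q\ind_q-q)$ in $A_{P,\mathcal{M}}$ furnished by Lemma \ref{mixrel} (for any $p,q\in P$) shows that the two-sided ideal $\langle\res_p\ind_p-p\rangle$ does not depend on $p$; rewriting the same identity as $\res_p\ind_p-1=\tfrac{p-1}{q-1}(\res_q\ind_q-1)$ gives the same for the ideal defining $T^2_{P,\mathcal{M}}$.

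Next, since $\epsilon_1$ and $\epsilon_2$ are central, orthogonal, and sum to $1$, I get the algebra decomposition $A_{P,\mathcal{M}}\cong A_{P,\mathcal{M}}\epsilon_1\oplus A_{P,\mathcal{M}}\epsilon_2$ via $z\mapsto z\epsilon_1\oplus z\epsilon_2$. The crucial observation, which is really the only nontrivial step, is the arithmetic identity $\res_q\ind_q-q=-(q-1)\epsilon_2$ and $\res_q\ind_q-1=(q-1)\epsilon_1$; consequently the ideals defining $T^1_{P,\mathcal{M}}$ and $T^2_{P,\mathcal{M}}$ equal $A_{P,\mathcal{M}}\epsilon_2$ and $A_{P,\mathcal{M}}\epsilon_1$ respectively. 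Reading this off the decomposition gives $T^i_{P,\mathcal{M}}\cong A_{P,\mathcal{M}}\epsilon_i$ for $i=1,2$, and composing with the first isomorphism yields the displayed chain of maps in the theorem statement.

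Finally, for the basis, write $B_{t,\vec l,\vec k}=(\res_{p_1}\ind_{p_1})^t\ind_{p_1}^{l_1}\cdots\ind_{p_{|P|}}^{l_{|P|}}\res_{p_1}^{k_1}\cdots\res_{p_{|P|}}^{k_{|P|}}$ for the basis of $A_{P,\mathcal{M}}$ from Theorem \ref{babybasisthm}. Since $\res_{p_1}\ind_{p_1}$ acts as $p_1$ in $T^1_{P,\mathcal{M}}$ and as $1$ in $T^2_{P,\mathcal{M}}$, in either quotient the factor $(\res_{p_1}\ind_{p_1})^t$ collapses to a scalar and the images of the $t=0$ monomials span. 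For linear independence I would use the established isomorphism $A_{P,\mathcal{M}}\cong T^1_{P,\mathcal{M}}\oplus T^2_{P,\mathcal{M}}$, under which the basis elements $B_{0,\vec l,\vec k}$ and $B_{1,\vec l,\vec k}$ map to $(\pi_1B_{0,\vec l,\vec k},\pi_2B_{0,\vec l,\vec k})$ and $(p_1\pi_1B_{0,\vec l,\vec k},\pi_2B_{0,\vec l,\vec k})$ respectively; since $p_1\ne 1$, joint independence of these pairs as $(\vec l,\vec k)$ varies forces independence of each family $\{\pi_iB_{0,\vec l,\vec k}\}$ in $T^i_{P,\mathcal{M}}$, completing the argument.
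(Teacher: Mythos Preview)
Your proposal is correct and follows essentially the same approach as the paper: split via the central idempotents of Lemma~\ref{idemlem}, identify each summand with the corresponding quotient, and read off the basis from Theorem~\ref{babybasisthm}. Your identification of the defining ideals with $A_{P,\mathcal{M}}\epsilon_2$ and $A_{P,\mathcal{M}}\epsilon_1$ via the arithmetic identities $\res_q\ind_q-q=-(q-1)\epsilon_2$ and $\res_q\ind_q-1=(q-1)\epsilon_1$ is slightly more direct than the paper's computation of $\res_p\ind_p\,\epsilon_1=p\,\epsilon_1$, but the content is the same; likewise your linear independence argument (pulling back to the basis of $A_{P,\mathcal{M}}$ via the pair $B_{0,\vec l,\vec k},B_{1,\vec l,\vec k}$) is equivalent to the paper's observation that the elements $z\epsilon_i$ for $z$ ranging over the $t=0$ basis monomials are distinct linear combinations of disjoint pairs of basis elements of $A_{P,\mathcal{M}}$.
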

\begin{proof}
The proof of Theorem \ref{decompthm} applies here too, with the exception that we need to invoke Theorem \ref{babybasisthm} instead of Theorem \ref{basisthm} in it.
\end{proof}

\begin{mythm}
\label{decompthm}
Let $P$ be a set of odd primes and pick an arbitrary indexing of $P$ by $\lb 1,|P|\rb$. Let also $\mathcal{M}\subset\mathcal{G}$ be some $A_P$-submodule spanned by simple $D_{2n}$-modules with $n$ odd and such that for each fixed $n$ either all or none of the simple $D_{2n}$-modules lie in $\mathcal{M}$, and furthermore such that there is a simple $D_{2n}$-module in $\mathcal{M}$ with all prime factors of $n$ belonging to $P$. Then the algebras $T^1_{P,\mathcal{M}}$ and $T^2_{P,\mathcal{M}}$ do not depend on the choice of $p$, and each has a basis consisting of the monomials of the forms
\begin{enumerate}
\item[$($i$)$]
$\ind_{p_1}^{l_1}\dots\ind_{p_{|P|}}^{l_{|P|}}\res_{p_1}^{k_1}\dots\res_{p_{|P|}}^{k_{|P|}}$\\
with $k_i,l_i\in\N$,
\item[$($ii$)$]
$\ind_{p_1}^{l_1}\res_{p_1}^{k_1}\dots\ind_{p_{|P|}}^{l_{|P|}}\res_{p_{|P|}}^{k_{|P|}}$\\
with $k_i,l_i\in\N$ such that $k_i\ne 0\ne l_i$ for at least two $i$,
\item[$($iii$)$]
$\res_{p_i}^k\ind_{p_j}^l$\\
with $i\ne j$, and $k,l\in\Z_{>0}$,
\item[$($iv$)$]
$\res_{p_{i\text{ $($mod }|P|)+1}}\ind_{p_i}^l\res_{p_i}^k\ind_{p_{i\text{ $($mod }|P|)+1}}$\\
with $k,l\in\Z_{>0}$,
\item[$($v$)$]
$\ind_{p_j}^l\res_{p_j}^k\ind_{p_1}^{l_1}\dots\ind_{p_{|P|}}^{l_{|P|}}$\\
with $j\in\lb 1,|P|\rb$, with $k,l\in\Z_{>0}$, and $l_i\in\N$ such that $l_j=0$ but $l_i\ne 0$ for at least one $i$,
\item[$($vi$)$]
$\res_{p_1}^{k_1}\dots\res_{p_{|P|}}^{k_{|P|}}\ind_{p_j}^l\res_{p_j}^k$\\
with $j\in\lb 1,|P|\rb$, with $k,l\in\Z_{>0}$, and $k_i\in\N$ such that $k_j=0$ but $k_i\ne 0$ for at least one $i$,
\end{enumerate}
where we have identified monomials in $A_{P,\mathcal{M}}$ with their images under the natural projections $\pi_1: A_{P,\mathcal{M}}\rightarrow T^1_{P,\mathcal{M}}$ and $\pi_2: A_{P,\mathcal{M}}\rightarrow T^2_{P,\mathcal{M}}$ respectively. Furthermore we have isomorphisms
\begin{align*}
A_{P,\mathcal{M}}\xrightarrow{\sim} A_{P,\mathcal{M}}\epsilon_1&\oplus A_{P,\mathcal{M}}\epsilon_2\xrightarrow{\sim} T^1_{P,\mathcal{M}}\oplus T^2_{P,\mathcal{M}}\\
z\mapsto z\epsilon_1&\oplus z\epsilon_2\mapsto \pi_1(z)\oplus\pi_2(z),
\end{align*}
where $\epsilon_1$ and $\epsilon_2$ depend on some fixed $q\in P$, as in Lemma \ref{idemlem}.
\end{mythm}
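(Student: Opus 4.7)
My plan is to prove the theorem in four steps: well-definedness of the $T^i_{P,\mathcal{M}}$, a quadratic identity for $x := \res_q\ind_q$, the identification of the idempotent summands, and the basis description.

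\emph{Well-definedness.} By Lemma \ref{mixrel}, for any $p, q \in P$ we have $\res_p\ind_p - p = \tfrac{p-1}{q-1}(\res_q\ind_q - q)$; adding $p - 1$ to both sides yields $\res_p\ind_p - 1 = \tfrac{p-1}{q-1}(\res_q\ind_q - 1)$. Since $\tfrac{p-1}{q-1}$ is a unit, the two-sided ideals $\langle \res_p\ind_p - p\rangle$ and $\langle \res_p\ind_p - 1\rangle$ are independent of the choice of $p \in P$, so both $T^1_{P,\mathcal{M}}$ and $T^2_{P,\mathcal{M}}$ are well-defined.

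\emph{Quadratic relation and decomposition.} Corollary \ref{ccor} gives centrality of $x := \res_q\ind_q$; in particular $x\ind_q = \ind_q x$, so $x^2 = \res_q\ind_q\res_q\ind_q = \res_q\res_q\ind_q\ind_q = (q+1)x - q$ by Lemma \ref{rellem}. Thus $(x-q)(x-1) = 0$. Rewriting $x - q = -(q-1)\epsilon_2$ and $x - 1 = (q-1)\epsilon_1$ and using centrality, we obtain $\langle x - q\rangle = A_{P,\mathcal{M}}\epsilon_2$ and $\langle x - 1\rangle = A_{P,\mathcal{M}}\epsilon_1$. Combined with the orthogonality and identity relations from Lemma \ref{idemlem}, this yields the decomposition $A_{P,\mathcal{M}} = A_{P,\mathcal{M}}\epsilon_1 \oplus A_{P,\mathcal{M}}\epsilon_2$ and the isomorphisms $T^i_{P,\mathcal{M}} \cong A_{P,\mathcal{M}}/A_{P,\mathcal{M}}\epsilon_{3-i} \cong A_{P,\mathcal{M}}\epsilon_i$ via $\pi_i$, as asserted in the theorem.

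\emph{Basis.} Let $B$ denote the set of monomials of forms (i)--(vi) without the prefix $\res_{p_1}\ind_{p_1}$. By Theorem \ref{basisthm}, $B \cup \res_{p_1}\ind_{p_1} B$ is a basis of $A_{P,\mathcal{M}}$, so $\dim A_{P,\mathcal{M}} = 2|B|$. Since $\pi_i(\res_{p_1}\ind_{p_1})$ is a nonzero scalar in $T^i_{P,\mathcal{M}}$ (equal to $p_1$ for $i=1$ and to $1$ for $i=2$), the image $\pi_i(B)$ spans $T^i_{P,\mathcal{M}}$, so $\dim T^i_{P,\mathcal{M}} \le |B|$. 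The decomposition above gives $\dim T^1_{P,\mathcal{M}} + \dim T^2_{P,\mathcal{M}} = \dim A_{P,\mathcal{M}} = 2|B|$, which forces $\dim T^i_{P,\mathcal{M}} = |B|$, so $\pi_i(B)$ is a basis.

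The principal obstacle is the derivation of the quadratic identity $x^2 = (q+1)x - q$: Lemma \ref{rellem} directly provides only $\res_q\res_q\ind_q\ind_q = (q+1)x - q$, and the rewriting of $\res_q\ind_q\res_q\ind_q = x^2$ as $\res_q\res_q\ind_q\ind_q$ is justified only by centrality of $x$ from Corollary \ref{ccor} (which allows $\ind_q\res_q\ind_q = \res_q\ind_q\ind_q$); this identity is what underpins the identification of the ideals $\langle x - q\rangle$ and $\langle x - 1\rangle$ with the idempotent summands and thereby the entire decomposition.
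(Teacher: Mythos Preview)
Your overall strategy matches the paper's: use the central idempotents to split $A_{P,\mathcal{M}}$, identify each summand with the corresponding quotient $T^i_{P,\mathcal{M}}$, and then read off the basis from Theorem~\ref{basisthm}. Your treatment of well-definedness and of the identification $T^i_{P,\mathcal{M}}\cong A_{P,\mathcal{M}}\epsilon_i$ via the quadratic relation $(x-q)(x-1)=0$ is clean and correct (the paper instead computes $\res_p\ind_p\,\epsilon_1=p\epsilon_1$ directly, but the two arguments are equivalent).

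There is, however, a genuine gap in your basis argument. The set $B$ is infinite (already the monomials of form~(i) are indexed by $\N^{2|P|}$), so the equalities $\dim A_{P,\mathcal{M}}=2|B|$ and $\dim T^1_{P,\mathcal{M}}+\dim T^2_{P,\mathcal{M}}=2|B|$ are statements about infinite cardinals, and from $\dim T^i_{P,\mathcal{M}}\le |B|$ together with $\dim T^1_{P,\mathcal{M}}+\dim T^2_{P,\mathcal{M}}=|B|$ one cannot conclude that each $\dim T^i_{P,\mathcal{M}}=|B|$, let alone that $\pi_i(B)$ is linearly independent. (A surjection between vector spaces of the same infinite dimension need not be injective.)

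The fix is short and is essentially what the paper does. Using your well-definedness step, write $\epsilon_1=\frac{1}{p_1-1}(\res_{p_1}\ind_{p_1}-1)$, so that for $b\in B$ one has $b\epsilon_1=\frac{1}{p_1-1}\bigl(\res_{p_1}\ind_{p_1}\,b-b\bigr)$. Since $B\cup \res_{p_1}\ind_{p_1}B$ is a basis of $A_{P,\mathcal{M}}$ by Theorem~\ref{basisthm}, the family $\{b\epsilon_1:b\in B\}$ is linearly independent in $A_{P,\mathcal{M}}$, hence in $A_{P,\mathcal{M}}\epsilon_1$. Transporting through your isomorphism $A_{P,\mathcal{M}}\epsilon_1\cong T^1_{P,\mathcal{M}}$ gives linear independence of $\pi_1(B)$; the argument for $\pi_2(B)$ is identical with $\epsilon_2=\frac{1}{p_1-1}(p_1-\res_{p_1}\ind_{p_1})$. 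Replace the dimension count with this paragraph and the proof is complete.
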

\begin{proof}
Because $\epsilon_1$ and $\epsilon_2$ are central idempotents which add up to $1$ by Lemma \ref{idemlem}, we indeed have the isomorphism
\begin{align*}
A_{P,\mathcal{M}}&\xrightarrow{\sim} A_{P,\mathcal{M}}\epsilon_1\oplus A_{P,\mathcal{M}}\epsilon_2\\
z&\mapsto z\epsilon_1\oplus z\epsilon_2.
\end{align*}
Let us show that we have 
\begin{align*}
A_{P,\mathcal{M}}\epsilon_1&\xrightarrow{\sim} T^1_{P,\mathcal{M}}\\
z\epsilon_1&\mapsto \pi_1(z),
\end{align*}
as well as the claimed basis of $T^1_{P,\mathcal{M}}$ (the corresponding proofs for $T^2_{P,\mathcal{M}}$ are done analogously). 

In $A_{P,\mathcal{M}}$, we have
\begin{align*}
\res_p\ind_p\epsilon_1&=\res_p\ind_p\frac{\res_{q}\ind_{q}-1}{q-1}=(\frac{p-1}{q-1}(\res_{q}\ind_{q}-q)+p)\frac{\res_{q}\ind_{q}-1}{{q}-1}\\&=(p-q\frac{q}{q-1})\frac{\res_{q}\ind_{q}-1}{q-1}+\frac{p-1}{q-1}\frac{\res_{q}\res_{q}\ind_{q}\ind_{q}-\res_{q}\ind_{q}}{{q}-1}\\&=(p-{q}\frac{p-1}{{q}-1})\frac{\res_{q}\ind_{q}-1}{{q}-1}+\frac{q}{{q}-1}\frac{({q}+1)\res_{q}\ind_{q}-{q}-\res_{q}\ind_{q}}{{q}-1}\\&=(p-{q}\frac{p-1}{{q}-1})\frac{\res_{q}\ind_{q}-1}{{q}-1}+{q}\frac{p-1}{{q}-1}\frac{\res_{q}\ind_{q}-1}{{q}-1}=p\epsilon_1.
\end{align*}
Thus we have a natural epimorphism
\begin{align*}
A_{P,\mathcal{M}}/\langle \res_{p}\ind_{p}-p\rangle &\twoheadrightarrow A_{P,\mathcal{M}}\epsilon_1\\
1&\mapsto\epsilon_1.
\end{align*}
From this epimorphism and the basis of $A_{P,\mathcal{M}}$ given in Theorem \ref{basisthm} we get that the elements of the forms 
\begin{align*}
z\epsilon_1=\frac{1}{q-1}(\res_{q}\ind_{q}z-z)
\end{align*}
with $z$ of the form (i)-(vi) span $A_{P,\mathcal{M}}\epsilon_1$, and by the same theorem that they are even linearly independent. 
\end{proof}

\begin{myprop}
\label{tisoprop}
For a set of odd primes $P$ and $\mathcal{M}\subset\mathcal{G}$ an $A_{P}$-submodule spanned by $D_{2n}$-modules with $n$ odd and furthermore such that for each fixed $n$ either all simple $D_{2n}$-modules or none belong to $\mathcal{M}$, we have a mapping
\begin{align*}
T^1_{P,\mathcal{M}}&\rightarrow T^2_{P,\mathcal{M}}\\
\res_p&\mapsto \res_p\\
\ind_p&\mapsto p\ind_p
\end{align*}
which extends to an isomorphism of algebras. 
\end{myprop}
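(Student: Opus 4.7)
The plan is to exhibit the claimed map as a well-defined algebra homomorphism and then to construct its inverse explicitly, using the presentations of $T^1_{P,\mathcal{M}}$ and $T^2_{P,\mathcal{M}}$ which are inherited from the presentation of $A_{P,\mathcal{M}}$ given in Theorems \ref{babybasisthm} and \ref{basisthm}.

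First, I would let $\phi: A_P \to T^2_{P,\mathcal{M}}$ be the algebra homomorphism from the free algebra determined on generators by $\res_p \mapsto \res_p$ and $\ind_p \mapsto p\ind_p$, and argue that $\phi$ descends to a homomorphism $T^1_{P,\mathcal{M}} \to T^2_{P,\mathcal{M}}$. This amounts to checking that each defining relation of $T^1_{P,\mathcal{M}}$, namely relations (i)--(v) of Theorem \ref{basisthm} (or Theorem \ref{babybasisthm}) together with the additional relation $\res_p\ind_p = p$ for $p \in P$, is satisfied after applying $\phi$. Relations (i) and (ii) are immediate because the scalar $p$ (resp.\ $pq$) factors out, and relation (iii) (the reordering relation) is preserved since $\phi$ multiplies any monomial by the total factor $\prod_{p\in P} p^{n_p}$, where $n_p$ is the number of $\ind_p$-letters in the monomial, and this factor depends only on the multiset of letters.

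The nontrivial verifications are relations (iv) and (v) together with the quotient relation $\res_p\ind_p = p$. Under $\phi$, the latter becomes $p\res_p\ind_p - p = p(\res_p\ind_p - 1)$, which vanishes in $T^2_{P,\mathcal{M}}$. For relation (iv), $\phi$ sends $\res_p\res_p\ind_p\ind_p - (p+1)\res_p\ind_p + p$ to $p^2\res_p\res_p\ind_p\ind_p - p(p+1)\res_p\ind_p + p$; in $T^2_{P,\mathcal{M}}$ we have $\res_p\ind_p = 1$ and consequently $\res_p\res_p\ind_p\ind_p = \res_p(\res_p\ind_p)\ind_p = 1$, so the image equals $p^2 - p(p+1) + p = 0$. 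For relation (v), $\phi$ scales $\res_p\ind_p - p$ by $p/(p-1)$ times $(\res_p\ind_p - 1)$ up to the appropriate constant, which vanishes in $T^2_{P,\mathcal{M}}$; symmetrically for the $q$-term. So $\phi$ descends to a homomorphism $T^1_{P,\mathcal{M}} \to T^2_{P,\mathcal{M}}$.

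Finally, I would define $\psi: T^2_{P,\mathcal{M}} \to T^1_{P,\mathcal{M}}$ by the analogous recipe $\res_p \mapsto \res_p$, $\ind_p \mapsto p^{-1}\ind_p$, and run through the same verifications \emph{mutatis mutandis}, using that in $T^1_{P,\mathcal{M}}$ one has $\res_p\ind_p = p$. The two maps are mutually inverse on generators, since $\psi\phi(\ind_p) = \psi(p\ind_p) = \ind_p$ and $\phi\psi(\ind_p) = \phi(p^{-1}\ind_p) = \ind_p$, and they fix $\res_p$; since the $\res_p$ and $\ind_p$ generate the algebras, this shows they are mutually inverse isomorphisms. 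The subtlest step is the check of relation (iv), which looks asymmetric in $p$ after scaling but works precisely because the identity $\res_p\ind_p = 1$ in $T^2_{P,\mathcal{M}}$ collapses $\res_p\res_p\ind_p\ind_p$ to $1$; it is in this sense that the two algebras differ only by the rescaling of the $\ind_p$'s that trivializes the constant on the right-hand side of the Temperley--Lieb-like quadratic relation.
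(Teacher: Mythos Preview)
Your proof is correct and follows essentially the same approach as the paper: both check that the defining relations of $T^1_{P,\mathcal{M}}$ are preserved under $\ind_p\mapsto p\ind_p$, with relations (i)--(iii) handled by homogeneity in the $\ind$-letters and relations (iv), (v), and $\res_p\ind_p=p$ handled by the identity $\res_p\ind_p=1$ in the target. The paper's version is terser---it simply notes that relations (iv) and (v) are already consequences of the quotient relation $\res_p\ind_p=p$ (respectively $=1$), so they need no separate check---whereas you verify them by direct computation and also spell out the inverse map explicitly, which the paper leaves implicit.
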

\begin{proof}
The relation $\res_p\ind_p=p$ in $T^1_{P,\mathcal{M}}$ is preserved by the mapping because of the relation $\res_p\ind_p=1$ in $T^2_{P,\mathcal{M}}$. The other relations (relations (i)-(v) as given in Theorems \ref{babybasisthm} and \ref{basisthm} respectively) are preserved because they either are special cases of the previous relation or are homogeneous in $\ind_p$. 
\end{proof}
Let $B=\langle a,b|ab=1\rangle$ be the bicyclic algebra.
\begin{mycor}
\label{bicyccor}
Let $P$ be a set of odd primes. Let also $\mathcal{M}\subset\mathcal{G}$ be some $A_P$-submodule spanned by simple $D_{2n}$-modules with $n$ odd and such that for each fixed $n$ either all or none of the simple $D_{2n}$-modules lie in $\mathcal{M}$, and furthermore such that there is no simple $D_{2n}$-module in $\mathcal{M}$ with all prime factors of $n$ belonging to $P$. Then
\begin{equation*}
A_{P,\mathcal{M}}\cong (\bigotimes_{p\in P}B)^2.
\end{equation*}
\end{mycor}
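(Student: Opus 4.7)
The plan is to reduce the statement, via Theorem \ref{babydecompthm} and Proposition \ref{tisoprop}, to proving the isomorphism $T^1_{P,\mathcal{M}} \cong \bigotimes_{p\in P} B$. Indeed, Theorem \ref{babydecompthm} gives $A_{P,\mathcal{M}} \cong T^1_{P,\mathcal{M}} \oplus T^2_{P,\mathcal{M}}$, and Proposition \ref{tisoprop} supplies the isomorphism $T^1_{P,\mathcal{M}} \cong T^2_{P,\mathcal{M}}$, so it suffices to identify a single summand with $\bigotimes_{p\in P} B$.

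Next I would examine how the presentation of $A_{P,\mathcal{M}}$ from Theorem \ref{babybasisthm} simplifies in $T^1_{P,\mathcal{M}}$ (i.e.\ after setting $\res_p\ind_p = p$). Relations (i)--(iii) continue to state that $\res_p, \ind_p$ commute with $\res_q, \ind_q$ for distinct primes $p \ne q$. Relation (iv) becomes the identity $p^2 = (p+1)p - p$, and relation (v) collapses to $0 = 0$; both are tautological. Hence $T^1_{P,\mathcal{M}}$ is generated by pairs $\{\res_p, \ind_p\}_{p\in P}$ such that generators from pairs indexed by distinct primes commute and within each pair one has $\res_p\ind_p = p$.

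Then I would define a homomorphism $\Phi: \bigotimes_{p\in P} B \to T^1_{P,\mathcal{M}}$ sending $a_p$ (the generator of the $p$-th tensor factor corresponding to the defining relation $ab = 1$) to $\tfrac{1}{p}\res_p$ and $b_p$ to $\ind_p$, with the identity on the other factors. The bicyclic relation $a_p b_p = 1$ is mapped to $\tfrac{1}{p}\res_p\ind_p = 1$, which holds in $T^1_{P,\mathcal{M}}$, and commutation between generators of different tensor factors is preserved by relations (i)--(iii). A two-sided inverse is given on generators by $\res_p \mapsto p\cdot a_p$ and $\ind_p \mapsto b_p$, which is easily seen to respect the above presentation of $T^1_{P,\mathcal{M}}$. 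To certify bijectivity rigorously, I would match the basis $\ind_{p_1}^{l_1}\cdots\ind_{p_{|P|}}^{l_{|P|}}\res_{p_1}^{k_1}\cdots\res_{p_{|P|}}^{k_{|P|}}$ of $T^1_{P,\mathcal{M}}$ from Theorem \ref{babydecompthm} against the tensor product of the standard bicyclic bases $\{b^l a^k : k,l\in\N\}$; the two sets correspond under $\Phi$ up to the nonzero scalars $\prod_i p_i^{k_i}$.

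No step of this argument looks particularly difficult --- the whole thing is an exercise in assembling the preceding results. The most error-prone part is verifying that the relations collapse correctly in $T^1_{P,\mathcal{M}}$, and in particular confirming that relation (iv) becomes a tautology once $\res_p\ind_p = p$ is imposed; beyond that, matching presentations and bases is routine.
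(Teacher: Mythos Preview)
Your proof is correct and follows essentially the same approach as the paper. The only cosmetic difference is that the paper works with $T^2_{P,\mathcal{M}}$ (where $\res_p\ind_p=1$) rather than $T^1_{P,\mathcal{M}}$, so its isomorphism $\res_p\mapsto a_p$, $\ind_p\mapsto b_p$ avoids the rescaling you introduce; your choice of $T^1$ is equally valid and the basis-matching argument is identical in substance.
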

\begin{proof}
Thanks to Theorem \ref{babydecompthm} and Proposition \ref{tisoprop}, it suffices to show that
\begin{equation*}
T^2_{P,\mathcal{M}}\cong \bigotimes_{p\in P}B.
\end{equation*}
Let, for every $p\in P$, the algebra $B_p=\langle a_p,b_p|a_pb_p=1\rangle$ be a copy of the bicyclic algebra. Then, considering the basis of $T^2_{P,\mathcal{M}}$ given in Theorem \ref{babydecompthm} as well as the relations $\res_p\ind_p=1$ in $T^2_{P,\mathcal{M}}$ and relations (i)-(iii) of Theorem \ref{babybasisthm}, we clearly have an isomorphism defined by
\begin{align*}
T^2_{P,\mathcal{M}}&\xrightarrow{\sim} \bigotimes_{p\in P}B_p\\
\res_p&\mapsto a_p\\
\ind_p&\mapsto b_p.
\end {align*}
\end{proof}

\begin{mylem}
\label{linindlem}
With the setup of Theorem \ref{decompthm}, for any two different monomials $z_1$ and $z_2$ either both of the form (i) or both of one of the forms (ii)-(vi) from that theorem, there exists some $q\in P$ such that the respective termini or the respective nadirs of $z_1$ and $z_2$ with respect to $q$ are different.
\end{mylem}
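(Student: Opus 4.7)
The plan is case analysis by which of the six forms (i)--(vi) of Theorem \ref{decompthm} the two monomials $z_1$ and $z_2$ share. For each form I compute the terminus $e_p$ and nadir $d_p$ with respect to every $p \in P$ explicitly by a right-to-left sweep of the running count (number of $\ind_p$ factors minus $\res_p$ factors encountered so far in the terminal subsequence). These numbers turn out to determine the parameters defining the monomial within its form, so two distinct monomials must differ in at least one such pair $(d_p, e_p)$.

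For forms (i) and (ii) the running count for a fixed prime $p_i$ is affected only by $\res_{p_i}$ and $\ind_{p_i}$ factors, since all other factors leave the $p_i$-count unchanged. In both cases the sweep from right to left first descends from $0$ to $-k_i$ and then rises to $l_i - k_i$, giving $d_{p_i} = -k_i$ and $e_{p_i} = l_i - k_i$. Hence $k_i = -d_{p_i}$ and $l_i = e_{p_i} - d_{p_i}$ are recovered uniquely from the data, so distinct parameter tuples produce distinct $(d_p, e_p)_{p \in P}$.

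For forms (iii)--(vi) the analogous sweep yields the following: form (iii) with parameters $(i, j, k, l)$ gives $d_{p_i} = e_{p_i} = -k$, $d_{p_j} = 0$, $e_{p_j} = l$ and $(0,0)$ for all other primes; form (iv) with $(i, k, l)$ leaves only $p_i$ with nonzero data, namely $d_{p_i} = -k$ and $e_{p_i} = l - k$; form (v) with $(j, k, l, l_1, \ldots, l_{|P|})$ gives $d_{p_j} = -k$, $e_{p_j} = l - k$, and for $i \neq j$, $d_{p_i} = 0$, $e_{p_i} = l_i$; and form (vi) with $(j, k, l, k_1, \ldots, k_{|P|})$ gives $d_{p_j} = -k$, $e_{p_j} = l - k$, and for $i \neq j$, $d_{p_i} = e_{p_i} = -k_i$. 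In each case one checks directly that the parameters can be read off from $(d_p, e_p)_{p \in P}$, completing the case.

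The main delicate point is verifying the nadir computations for forms (v) and (vi), where the minimum for $p_j$ is attained strictly inside the monomial rather than at its left end. One must check that the remaining factors further to the left do not drive the $p_j$-count below $-k$: in form (v), the constraint $l_j = 0$ ensures no further $\ind_{p_j}$ or $\res_{p_j}$ lies to the left of the $\ind_{p_j}^l$ block, and in form (vi) the constraint $k_j = 0$ ensures no $\res_{p_j}$ appears in the leftmost $\res$-block. With this observation in hand the remaining case analysis is routine.
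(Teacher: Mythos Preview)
Your within-form analysis is correct and is precisely the paper's argument for that part: for each fixed form, the tuple $(d_p,e_p)_{p\in P}$ recovers all the defining parameters, so two distinct monomials of the same form must differ in some terminus or nadir.

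However, you have read the hypothesis too narrowly. The clause ``both of one of the forms (ii)--(vi)'' is intended in the paper to mean that $z_1$ and $z_2$ both lie among forms (ii)--(vi), possibly of \emph{different} forms in that range; the paper's own proof explicitly treats ``$z_1$ and $z_2$ being of different forms (ii)--(vi)'', and the application in Theorem~\ref{cthm} requires the conclusion for arbitrary pairs of basis monomials without total nadir. Your case split assumes throughout that $z_1$ and $z_2$ share a form, so the cross-form case is missing.

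The omission is easy to repair using the data you have already computed. From your tables, set $S_1=\{p\in P:d_p\neq 0\}$ and $S_2=\{p\in P:d_p\neq e_p\}$; these encode exactly the ``starts on a $p$-edge'' and ``ends on a $p$-edge'' conditions the paper invokes from the proof of Theorem~\ref{basisthm}. One reads off: form (ii) has $|S_1|\ge 2$ and $|S_2|\ge 2$; form (iii) has $|S_1|=|S_2|=1$ with $S_1\cap S_2=\emptyset$; form (iv) has $|S_1|=|S_2|=1$ with $S_1=S_2$; form (v) has $|S_1|=1$ and $|S_2|\ge 2$; form (vi) has $|S_1|\ge 2$ and $|S_2|=1$. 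These five patterns are pairwise incompatible, so monomials of different forms among (ii)--(vi) already differ in some $(d_p,e_p)$. Add this observation and your proof is complete.
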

\begin{proof}
It is clear from considering whether $z$ starts or ends on various $p$-edges in the proof of Theorem \ref{basisthm} that the lemma statement holds for $z_1$ and $z_2$ being of different forms (ii)-(vi). For each fixed form (i)-(vi) it is straightforward to verify that the termini and nadirs with respect to the $p\in P$ uniquely determine a monomial of that form. 
\end{proof}

\begin{mylem}
\label{clem}
Let $P$ be a set of odd primes, and let $\mathcal{M}\subset\mathcal{G}$ be an $A_{P}$-submodule spanned by $D_{2n}$-modules with $n$ odd and furthermore such that for each fixed $n$ either all simple $D_{2n}$-modules or none belong to $\mathcal{M}$. Then the only central elements of $T^2_{P,\mathcal{M}}$ which are linear combinations of monomials which have a total nadir are the scalars. 
\end{mylem}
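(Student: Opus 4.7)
The plan is to exploit the explicit basis of $T^2_{P,\mathcal{M}}$ provided by Theorems \ref{babydecompthm} and \ref{decompthm}. The key observation is that the monomials possessing a total nadir correspond precisely to those of form (i), i.e., the images of $\ind_{p_1}^{l_1}\cdots\ind_{p_{|P|}}^{l_{|P|}}\res_{p_1}^{k_1}\cdots\res_{p_{|P|}}^{k_{|P|}}$, since by the case analysis in the proof of Theorem \ref{basisthm} the forms (ii)--(vi) capture exactly the monomials without a total nadir. Accordingly, I would write a central element $z$ of the type under consideration as $z=\sum c_{\vec{l},\vec{k}}\ind^{\vec{l}}\res^{\vec{k}}$, with multi-index notation, and aim to show $c_{\vec{l},\vec{k}}=0$ for $(\vec{l},\vec{k})\ne(\vec{0},\vec{0})$.

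The first step is to use centrality with each $\ind_{p_i}$. Since the $\ind$'s all commute, $\ind_{p_i}z=\sum c_{\vec{l},\vec{k}}\ind^{\vec{l}+e_i}\res^{\vec{k}}$ is already in normal form. To rewrite $z\ind_{p_i}$, the factors $\res_{p_j}$ with $j\ne i$ slide past $\ind_{p_i}$ by the commutation relations of Theorem \ref{basisthm}, while the defining relation $\res_{p_i}\ind_{p_i}=1$ of $T^2_{P,\mathcal{M}}$ collapses a $\res_{p_i}\ind_{p_i}$ pair whenever $k_i\ge 1$. This yields $z\ind_{p_i}=\sum_{k_i\ge 1}c_{\vec{l},\vec{k}}\ind^{\vec{l}}\res^{\vec{k}-e_i}+\sum_{k_i=0}c_{\vec{l},\vec{k}}\ind^{\vec{l}+e_i}\res^{\vec{k}}$. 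Matching coefficients in the form (i) basis produces, for every $(\vec{L},\vec{K})$, boundary vanishings $c_{\vec{L},\vec{K}+e_i}=0$ whenever $L_i=0$ or $K_i=0$, together with an interior recursion $c_{\vec{L},\vec{K}+e_i}=c_{\vec{L}-e_i,\vec{K}}$ when $L_i,K_i\ge 1$.

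The main obstacle is then converting these local identities into a global vanishing. I would handle it by induction on $\min(L_i,K_i)$: the base case is covered by the boundary relations, and the recursion diagonally transports an interior coefficient to one of the boundaries in finitely many steps, forcing it to vanish. Running this argument for every $i\in \lb 1,|P|\rb$ kills every $c_{\vec{l},\vec{k}}$ with $\vec{k}\ne\vec{0}$, so $z$ reduces to the simpler form $\sum_{\vec{l}}c_{\vec{l}}\ind^{\vec{l}}$.

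Finally I would commute the reduced $z$ with each $\res_{p_i}$. Using $\res_{p_i}\ind_{p_i}=1$ and commutativity one finds $\res_{p_i}z=\sum_{l_i\ge 1}c_{\vec{l}}\ind^{\vec{l}-e_i}+\sum_{l_i=0}c_{\vec{l}}\ind^{\vec{l}}\res^{e_i}$, while $z\res_{p_i}=\sum c_{\vec{l}}\ind^{\vec{l}}\res^{e_i}$. Comparing the coefficient of $\ind^{\vec{L}}\res^{e_i}$ in the form (i) basis immediately gives $c_{\vec{L}}=0$ whenever $L_i\ge 1$. Ranging $i$ over $P$ then eliminates every nontrivial $\vec{l}$, leaving $z=c_{\vec{0},\vec{0}}\in\C$, which is exactly the desired conclusion.
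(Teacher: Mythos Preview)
Your argument is clean and correct under the hypotheses of Theorem~\ref{babybasisthm}, where relation~(iii) gives the honest commutation $\ind_q\res_p=\res_p\ind_q$ for $p\ne q$, so the sliding step is legitimate and the coefficient recursion goes through exactly as you describe. The lemma, however, is stated for general $\mathcal{M}$ and must also cover the hypotheses of Theorem~\ref{basisthm}, and there your key manoeuvre fails. In $T^2_{P,\mathcal{M}}$ one does \emph{not} have $\res_{p_j}\ind_{p_i}=\ind_{p_i}\res_{p_j}$ for $j\ne i$: by Theorem~\ref{decompthm}, $\ind_{p_i}\res_{p_j}$ is a basis element of form~(i) while $\res_{p_j}\ind_{p_i}$ is a basis element of form~(iii), and these are distinct. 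The reason is precisely the total-nadir obstruction: $\ind_{p_i}\res_{p_j}$ has a total nadir whereas $\res_{p_j}\ind_{p_i}$ does not, so relation~(iii) of Theorem~\ref{basisthm} does not permit the swap. Consequently your normal form for $z\ind_{p_i}$ is wrong whenever a term of $z$ has $k_i=0$ but some $k_j\ne 0$: the product $\ind^{\vec l}\res^{\vec k}\ind_{p_i}$ then lands among the forms~(ii)--(vi) rather than at $\ind^{\vec l+e_i}\res^{\vec k}$, and the coefficient identities you extract from equating form~(i) parts no longer follow. The same issue recurs in your final step when you try to move $\res_{p_i}$ past $\ind^{\vec l}$ with $l_i=0$ and $\vec l\ne 0$.

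The paper's proof avoids this by arguing with the nadir invariant instead of attempting an explicit normal-form reduction of $z\ind_p$. It isolates the terms $z_1$ of $z$ that actually contain a factor $\res_p$ and observes that right-multiplying such a term by $\ind_p$ raises its $p$-nadir by one while left-multiplication does not; via Lemma~\ref{linindlem} this produces a surviving basis element in $\ind_pz-z\ind_p$. The point is that this nadir comparison is robust under the total-nadir bookkeeping, which is exactly where your commutation-based computation breaks down.
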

\begin{proof}
Assume towards a contradiction that $z\in T^2_{P,\mathcal{M}}\backslash\C$ lies in the center and is a linear combination of monomials each having a total nadir. Then $z$ can be written as a linear combination of monomials of the form (i) as in Theorem \ref{decompthm}. First consider the case where $z$ contains some factor $\res_p$ for some fixed $p\in P$. Let $z=z_1+z_2$ where the monomial terms of $z_1$ contain a factor $\res_p$ while the monomial terms of $z_2$ do not. Then 
\begin{equation*}
\ind_pz-z\ind_p=\ind_pz_1-z_1\ind_p.
\end{equation*}
Multiplying a monomial term of $z_1$ by $\ind_p$ from the right increases both the nadir and the terminus of the monomial with respect to $p$ by 1, while leaving the other termini and nadirs unchanged. It then follows from Lemmata \ref{nadirlem} and \ref{linindlem} that this multiplication does not annihilate any monomial terms of $z_1$. The largest nadir of any term of $\ind_pz_1-z_1\ind_p$ with respect to $p$ is clearly to be found in $z_1\ind_p$ and not in $\ind_pz_1$. But then $\ind_pz-z\ind_p$ will contain a nonzero multiple of such a term, contradicting $\ind_pz-z\ind_p=0$. The case where the monomial terms of $z$ only have factors $\ind_p$ for various $p\in P$ is handled by fixing one such $p\in P$, considering the expression
\begin{equation*}
\res_p z-z\res_p,
\end{equation*}
and applying a similar argument. 
\end{proof}

\begin{mythm}
\label{cthm}
Let $P$ be a set of odd primes, let $p\in P$ be arbitrary, and let $\mathcal{M}\subset\mathcal{G}$ be an $A_{P}$-submodule spanned by $D_{2n}$-modules with $n$ odd and furthermore such that for each fixed $n$ either all simple $D_{2n}$-modules or none belong to $\mathcal{M}$. Then the center of $A_{P,\mathcal{M}}$ is generated by 1 and $\res_p\ind_p$. 
\end{mythm}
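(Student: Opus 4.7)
The plan is to use the idempotent decomposition from Theorem \ref{decompthm} (or Theorem \ref{babydecompthm} when $\mathcal{M}$ contains no $D_{2n}$-module all of whose prime factors lie in $P$) to split the question. That decomposition gives an algebra isomorphism $A_{P,\mathcal{M}}\cong T^1_{P,\mathcal{M}}\oplus T^2_{P,\mathcal{M}}$, so $Z(A_{P,\mathcal{M}})\cong Z(T^1_{P,\mathcal{M}})\oplus Z(T^2_{P,\mathcal{M}})$. Under this isomorphism, $1$ corresponds to $(1,1)$ and $\res_p\ind_p$ to $(p,1)$ (using $\res_p\ind_p=p$ in $T^1$ and $\res_p\ind_p=1$ in $T^2$); these two elements span the $2$-dimensional commutative subalgebra $\C\oplus\C$. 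Thus the theorem reduces to showing $Z(T^i_{P,\mathcal{M}})=\C$ for $i\in\{1,2\}$, and by Proposition \ref{tisoprop} it is enough to treat $T^2_{P,\mathcal{M}}$.

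Given $z\in Z(T^2_{P,\mathcal{M}})$, expand $z$ in the basis of Theorem \ref{decompthm} and split $z=z_{\text{tot}}+z_{\text{non}}$, where $z_{\text{tot}}$ collects the type (i) basis monomials (those with a total nadir) and $z_{\text{non}}$ collects those of types (ii)--(vi) (those without a total nadir). Lemma \ref{clem} already disposes of the total-nadir piece: once we know $z_{\text{non}}=0$, the element $z=z_{\text{tot}}$ is central and expressible as a linear combination of total-nadir monomials, so that lemma forces $z\in\C$. The entire task therefore reduces to showing $z_{\text{non}}=0$.

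To prove $z_{\text{non}}=0$ I would extend the argument used in Lemma \ref{clem}. For each $p\in P$ one has $[\ind_p,z]=0$ and $[\res_p,z]=0$. Left versus right multiplication by $\ind_p$ (or $\res_p$) affects termini and nadirs with respect to $p$ asymmetrically: for instance, right multiplication of a type (i) monomial by $\ind_p$ raises both its $p$-terminus and its $p$-nadir by one, while left multiplication raises the terminus but not always the nadir. Using Lemma \ref{linindlem}, basis monomials within a fixed type are distinguished by their termini and nadirs, and Lemma \ref{nadirlem} guarantees that the relevant products do not collapse to zero on suitably chosen simples. Picking the basis monomial in $z_{\text{non}}$ of extremal signature with respect to an appropriate $p$, its image in $[\ind_p,z]$ (or $[\res_p,z]$) has a leading term with no cancellation partner elsewhere, forcing its coefficient to vanish; iterating strips $z_{\text{non}}$ to zero.

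The main obstacle is that in $T^2_{P,\mathcal{M}}$ the defining relation $\res_p\ind_p=1$ makes the decomposition $z=z_{\text{tot}}+z_{\text{non}}$ unstable under commutators: for example, right multiplication of a type (iii) monomial $\res_{p_i}^{k}\ind_{p_j}^{l}$ by $\ind_{p_i}$ collapses a $\res_p\ind_p$ pair and can produce a type (i) monomial, so one cannot naively argue that $z_{\text{tot}}$ and $z_{\text{non}}$ are individually central. The resolution will be a case-by-case analysis through the six basis types of Theorem \ref{decompthm}, modeled on the case split in the proof of Theorem \ref{basisthm}: for each form one tracks exactly how left and right multiplication by $\ind_p$ and $\res_p$ reshape the basis expansion and constructs a partial order on terminus/nadir signatures in which the chosen leading term is manifestly uncancellable.
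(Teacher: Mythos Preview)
Your reduction to showing $Z(T^2_{P,\mathcal{M}})=\C$ via Theorems~\ref{babydecompthm}/\ref{decompthm} and Proposition~\ref{tisoprop}, together with the split $z=z_{\text{tot}}+z_{\text{non}}$ and the appeal to Lemma~\ref{clem} once $z_{\text{non}}=0$, is exactly what the paper does. The divergence is in how $z_{\text{non}}=0$ is obtained.

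You propose to commute with single generators $\ind_p$, $\res_p$ and perform a case-by-case analysis over the six basis types, while conceding that the relation $\res_p\ind_p=1$ makes the total-nadir/non-total-nadir split unstable under such commutators. That plan is not carried out, and the instability you flag is precisely what makes it delicate: products like $\res_{p_i}^{k}\ind_{p_j}^{l}\cdot\ind_{p_i}$ jump between types, so an ``extremal leading term'' argument with a single generator does not obviously survive across all six forms. As written this is a promissory note rather than a proof.

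The paper avoids the case analysis by commuting $z$ not with a single generator but with the element
\[
x=\res_{p_1}^{l}\res_{p_2}^{l}\cdots\res_{p_{|P|}}^{l},
\]
where $l$ is the degree of $z$. The point is that for any monomial $z'$ of degree at most $l$, the product $xz'$ automatically has a total nadir (since $x$ contributes at least as many $\res_q$ as $z'$ could contribute $\ind_q$, for every $q$), whereas $z'x$ has a total nadir if and only if $z'$ does. Thus in $xz-zx=0$ the only contribution without a total nadir is $-z_{\text{non}}x$, forcing $z_{\text{non}}x=0$. Right multiplication by $x$ shifts every $p$-terminus and every $p$-nadir by $-l$, so by Lemma~\ref{linindlem} it preserves linear independence among the basis monomials of types (ii)--(vi), and by Lemma~\ref{nadirlem} it kills none of them; hence $z_{\text{non}}=0$. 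One preliminary step you also omit: using Lemma~\ref{deplem} (and the fact that $\res_q\ind_q=1$ is terminus-homogeneous) the paper first reduces to $z$ whose monomial terms all share the same termini, which is needed to control the degree $l$.

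In short, your outline is on the right track but the key simplifying idea you are missing is to test centrality against the single ``long'' element $x$ rather than against individual generators.
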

\begin{proof}
By Theorems \ref{babydecompthm} and \ref{decompthm}, together with Proposition \ref{tisoprop}, it suffices to show that the center of $T^2_{P,\mathcal{M}}$ is $\C$. Assume towards a contradiction that $z$ lies in the center of $T^2_{P,\mathcal{M}}$ but not in $\C$. Let $l$ be the degree of $z$ (as a polynomial in various $\res$ and $\ind$).

For arbitrary monic monomials $z',z''\in T^2_{P,\mathcal{M}}$, let $e'_q$ be the terminus of $z'$ and $d'_q$ the nadir of $z'$, and also $e''_q$ the terminus of $z''$ and $d''_q$ the nadir of $z''$, with respect to $q$ for all $q\in P$. Note that $z'z''$ and $z''z'$ both have terminus $e'_q+e''_q$ with respect to $q$, and that by assumption we have the relation
\begin{equation*}
z'z=zz'
\end{equation*}
in $T^2_{P,\mathcal{M}}$. Using Lemma \ref{deplem} and the fact that the monomials involved in the extra relation $\res_q\ind_q=1$ all have the same termini, we may assume that all monomial terms in $z$ have the same termini (with respect to every $q\in P$). 

Using (for instance) the same indexation of the primes in $P$ as in Theorem \ref{basisthm}, let 
\begin{equation*}
x=\res_{p_1}^l\res_{p_2}^l\dots\res_{p_{|P|}}^l. 
\end{equation*} 
Let furthermore $z=z_1+z_2$, where the monomial terms of $z_1$ have a total nadir while the monomial terms of $z_2$ do not. Then $z_1$ can be written as a linear combination of basis elements of the form (i) in Theorem \ref{decompthm}. 

Note that $xz'$ is a total nadir in $xz'$ if $z'$ has degree at most $l$, because for each $q\in P$, the number of factors $\res_q$ in $x$ is greater than or equal to the number of factors $\ind_q$ in $z'$. Also, $z''x$ is a nadir in $z'x$ with respect to $q$ if and only if $z''$ is a nadir in $z'$ with respect to $q$. In particular, $xz_1$, $z_1x$, and $xz_2$ will all have a total nadir, while $z_2x$ will not. Write these expressions in the basis of Theorem \ref{decompthm} to see that it then follows from $xz-zx=0$ that $z_2x=0$

Now, either $z_2=0$ (which is in particular the case if there is no $D_{2n}$-module in $\mathcal{M}$ with all prime factors of $n$ belonging to $P$, as in Theorem \ref{babybasisthm}) or $z_2\ne 0$. In the first case, we are done by Lemma \ref{clem}. In the second case, let $u$ be an arbitrary monomial term of $z_2$ when the latter is expressed in the basis given in Theorem \ref{decompthm}. Let $e_q$ be the terminus of $u$ with respect to $q\in P$ and $d_q$ be the nadir of $u$ with respect to $q$. Using (for instance) Lemma \ref{nadirlem} we see that $ux\ne 0$. Hence the relation $z_2x=0$ is nontrivial. 

Moreover, the terminus of $ux$ with respect to $q$ is $e_q-l$, and the nadir of $ux$ with respect to $q$ is $d_q-l$. In particular, we get by Lemma \ref{linindlem} that the linear independence of the terms of $z_2$ (expressed in the basis of Theorem \ref{decompthm}) is preserved by right multiplication by $x$. But that $z_2x$ is a linear combination of linearly independent elements where not all coefficients are zero contradicts $z_2x=0$. 
\end{proof}

\begin{mycor}
\label{indeccor}
Let $P$ be a set of odd primes, and let $\mathcal{M}\subset\mathcal{G}$ be an $A_{P}$-submodule spanned by $D_{2n}$-modules with $n$ odd and furthermore such that for each fixed $n$ either all simple $D_{2n}$-modules or none belong to $\mathcal{M}$. Then the algebras $T^1_{P,\mathcal{M}}$ and $T^2_{P,\mathcal{M}}$ are indecomposable.
\end{mycor}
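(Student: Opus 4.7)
The plan is to deduce indecomposability of $T^1_{P,\mathcal{M}}$ and $T^2_{P,\mathcal{M}}$ from the description of the center given in Theorem \ref{cthm}, together with the block decomposition established in Theorems \ref{babydecompthm} and \ref{decompthm}. Recall that a unital associative algebra is indecomposable (as an algebra) if and only if its center contains no idempotents other than $0$ and $1$; in particular, an algebra whose center is just $\C$ is automatically indecomposable. So it suffices to show that $Z(T^1_{P,\mathcal{M}}) = \C = Z(T^2_{P,\mathcal{M}})$.

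To do this, I would first combine Theorems \ref{babydecompthm} and \ref{decompthm} with the standard fact that the center of a direct product of algebras is the direct product of the centers: from $A_{P,\mathcal{M}} \cong T^1_{P,\mathcal{M}} \oplus T^2_{P,\mathcal{M}}$ we get
\begin{equation*}
Z(A_{P,\mathcal{M}}) \cong Z(T^1_{P,\mathcal{M}}) \oplus Z(T^2_{P,\mathcal{M}}).
\end{equation*}
Since each summand on the right contains a copy of $\C$ (the scalars), the right-hand side has dimension at least $2$.

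Next, I would bound $\dim_\C Z(A_{P,\mathcal{M}})$ from above using Theorem \ref{cthm}, which says the center is generated by $1$ and $\res_p\ind_p$. Because $\res_p\ind_p$ is central (Corollary \ref{ccor}), it commutes with $\res_p$, and therefore $(\res_p\ind_p)^2 = \res_p\res_p\ind_p\ind_p$; by Lemma \ref{rellem} this equals $(p+1)\res_p\ind_p - p$. Thus $\res_p\ind_p$ satisfies a quadratic relation over $\C$, and $Z(A_{P,\mathcal{M}})$ is spanned by $1$ and $\res_p\ind_p$, i.e.\ has dimension at most $2$.

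Combining the two bounds forces $\dim_\C Z(T^1_{P,\mathcal{M}}) = \dim_\C Z(T^2_{P,\mathcal{M}}) = 1$, so each center is exactly $\C$ and both algebras are indecomposable. I do not expect any real obstacle here: the only point that needs a tiny bit of care is the replacement $(\res_p\ind_p)^2 = \res_p\res_p\ind_p\ind_p$, which relies on centrality of $\res_p\ind_p$ and is already implicit in the proof of Lemma \ref{idemlem}.
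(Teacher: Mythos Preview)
Your argument is correct and is essentially the same as the paper's: both deduce indecomposability from Theorem \ref{cthm} via the decomposition $A_{P,\mathcal{M}}\cong T^1_{P,\mathcal{M}}\oplus T^2_{P,\mathcal{M}}$, the only difference being that you phrase it as an explicit dimension count on centers while the paper argues by contradiction (a nontrivial idempotent in some $T^i$ would yield a central element of $A_{P,\mathcal{M}}$ outside the span of $1$ and $\res_p\ind_p$). In fact the statement $Z(T^i_{P,\mathcal{M}})=\C$ that you extract is already established inside the proof of Theorem \ref{cthm} itself.
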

\begin{proof}
If $T^1_{P,\mathcal{M}}$ or $T^2_{P,\mathcal{M}}$ were decomposable, then the identity of any summand would be a non-scalar central element of the sum. This element would via theorems \ref{babydecompthm} and \ref{decompthm} correspond to a central element of $A_{P,\mathcal{M}}$ not generated by $1$ and $\res_p\ind_p$ for any $p\in P$, which would contradict Theorem \ref{cthm}. 
\end{proof}

\section{Results for restriction and induction with respect to the prime 2, and further directions}\label{s6}
This section deals with some partial results for the case where $2\in P$, and seeks to point towards some directions suitable for further investigation. The discussion will largely be intended to convey intuition, at some expense to rigor. 

Induction and restriction of $D_{2n}$-modules for $2\in P$ (and hence possibly even $n$) handles differently than for the odd primes only, as is evident from Proposition \ref{resindprop} (and also from Figures 1 and 2). This leads to the failure of analogues of some of the results leading up to the main result of the preceding section, Theorem \ref{basisthm}. We recover the following analogue of Lemma \ref{rellem}.

\begin{myprop}
\label{relprop}
Let $P\ni 2$ be a set of prime numbers, and let $\mathcal{M}\subset\mathcal{G}$ be an $A_P$-submodule. Then
\begin{equation*}
\varphi_{P,\mathcal{M}}(\res_2\res_2\res_2\ind_2\ind_2\ind_2)=\varphi_{P,\mathcal{M}}(3\res_2\res_2\ind_2\ind_2-2\res_2\ind_2).
\end{equation*}
\end{myprop}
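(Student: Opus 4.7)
The plan is to verify the identity directly on every simple module in $\mathcal{G}$. Since $\varphi_{P,\mathcal{M}}(z_1) = \varphi_{P,\mathcal{M}}(z_2)$ is equivalent to $(z_1 - z_2)M = 0$ for all $M \in \mathcal{M}$, and Lemma \ref{wloglem} lets me enlarge $\mathcal{M}$ to $\mathcal{G}$, it suffices to show in the Grothendieck group that
\[
\res_2^3 \ind_2^3 M \;=\; 3\,\res_2^2 \ind_2^2 M \;-\; 2\,\res_2 \ind_2 M
\]
for each simple $D_{2n}$-module $M$. I would split into three cases according to the families of simples: $M = V_{1,b}(n)$ for arbitrary $n \ge 3$, $M = V_{-1,b}(n)$ with $n$ even, and $M = W_k(n)$ with $k \not\in \tfrac{1}{2}\Z n$.

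In each case I would unfold $\ind_2 M$, $\ind_2^2 M$ and $\ind_2^3 M$ step-by-step using Proposition \ref{resindprop}(iii)-(iv), then iteratively apply $\res_2$ using Proposition \ref{resindprop}(i)-(ii). Throughout, Lemma \ref{wlem}(i) would collapse $W$-summands whose indices are congruent modulo the ambient order, and Lemma \ref{wlem}(iii)-(iv) would be invoked whenever a $W$-summand lands on a ``boundary'' index $jn/2$ and decomposes into one-dimensional pieces. The same computation yields $\res_2^2\ind_2^2 M$ and $\res_2\ind_2 M$ by truncation. The cleanest organizing observation is that $\res_2\ind_2$ acts as multiplication by $2$ on $V_{1,b}(n)$ and on every simple $W_k(n)$, but on $V_{-1,b}(n)$ it produces $V_{-1,1}(n) + V_{-1,-1}(n)$ because $\ind_2 V_{-1,b}(n) = W_{n/2}(2n)$ is a boundary module. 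Iterating this gives the explicit formulas $\res_2^3\ind_2^3 W_k(n) = 8W_k(n)$, $\res_2^3\ind_2^3 V_{-1,b}(n) = 4V_{-1,b}(n) + 4V_{-1,-b}(n)$, and $\res_2^3\ind_2^3 V_{1,b}(n) = 5V_{1,b}(n) + 3V_{1,-b}(n)$, each matching $3\res_2^2\ind_2^2 M - 2\res_2\ind_2 M$ by direct numerical comparison.

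The main obstacle is the combinatorial bookkeeping, particularly in the $V_{1,b}(n)$ case. There, at an intermediate stage the summand $W_n(4n)$ lands in $\tfrac{1}{2}\Z(4n) \setminus \Z(4n)$ and so decomposes into $V_{-1,1}(4n) \oplus V_{-1,-1}(4n)$; this single decomposition is what breaks the naive symmetry between $b$ and $-b$ and accounts for the nontrivial coefficients $5$ and $3$ above. Once the membership checks $k \in \tfrac{1}{2}\Z m \setminus \Z m$ versus $k \in \Z m$ are tabulated level by level, and the $\pm$-congruences identifying isomorphic $W$-modules have been applied, the identity reduces in each case to routine arithmetic on small integers.
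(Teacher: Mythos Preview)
Your approach is exactly the paper's: verify the relation on each simple by direct computation using Proposition~\ref{resindprop}, and your final formulas $\res_2^3\ind_2^3 W_k(n)=8W_k(n)$, $\res_2^3\ind_2^3 V_{-1,b}(n)=4V_{-1,b}(n)+4V_{-1,-b}(n)$, and $\res_2^3\ind_2^3 V_{1,b}(n)=5V_{1,b}(n)+3V_{1,-b}(n)$ agree with the paper's. One small correction to your narrative: the summand $W_n(4n)$ is simple (since $n\notin 2n\Z$); the boundary decomposition you describe actually occurs one step later, when $\res_2 W_{2n}(8n)=W_{2n}(4n)\cong V_{-1,1}(4n)\oplus V_{-1,-1}(4n)$.
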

\begin{proof}
By direct computation using Proposition \ref{resindprop} (or by looking at Figure 2) we have for $n\ge 3$ that
\begin{align*}
&(2\res_2\ind_2-3\res_2\res_2\ind_2\ind_2+\res_2\res_2\res_2\ind_2\ind_2\ind_2)W_k(n)\\&=2\cdot 2W_k(n)-3\cdot 2^2W_k(n)+2^3W_k(n)=0,
\end{align*}
and
\begin{align*}
&(2\res_2\ind_2-3\res_2\res_2\ind_2\ind_2+\res_2\res_2\res_2\ind_2\ind_2\ind_2)V_{1,b}(n)\\&=2\cdot 2V_{1,b}(n)-3(3V_{1,b}(n)+V_{1,-b}(n))+5V_{1,b}(n)+3V_{1,-b}(n)=0,
\end{align*}
and
\begin{align*}
&(2\res_2\ind_2-3\res_2\res_2\ind_2\ind_2+\res_2\res_2\res_2\ind_2\ind_2\ind_2)V_{-1,b}(n)\\&=2(V_{-1,b}(n)+V_{-1,-b}(n))-3(2V_{-1,b}(n)+2V_{-1,-b}(n))+4V_{-1,b}(n)+4_{-1,-b}(n)=0.
\end{align*}
The desired result follows.
\end{proof}
The above relation was indeed discovered in the same way as the relation of Lemma \ref{rellem}: For each $p\in P$, the actions of the monomials of the form $\res_p^l\ind_p^l$ on $\mathcal{M}$ is, because of the regularity of the induction/restriction diagrams, determined by their actions on a finite number of simple modules in $\mathcal{M}$, and on the span of these modules, the infinitely many monomials $\res_p^l\ind_p^l$ act as endomorphisms, hence only a finite number of them can be linearly independent. It is now not difficult to find the above linear dependence explicitly.

Essential for Theorem \ref{basisthm} were also our ability to commute factors corresponding to different primes using Proposition \ref{nadirprop}, the linear independence of monomials of different nadirs and ``total nadirity'' by Lemma \ref{deplem}, and the simplification provided by each $\res_p\ind_p$ being central in $A_{P,\mathcal{M}}$ by Corollary \ref{ccor}. The first two of these results are a consequence of Lemma \ref{partfunlem}, which in turn relies on properties of the induction/restriction diagrams discussed in the paragraphs preceding the lemma. By inspection of the induction/restriction diagram for the prime $2$ (see Figure 2), it seems likely that we by a construction similar to that of Lemma \ref{partfunlem} may perform the ``upward translation'' required for Proposition \ref{nadirprop}. The ``downward translation'' used for Lemma \ref{deplem} seems to fail for the prime $2$, since either the module $W_m(4m)$ or the modules $V_{-1,b}(2m)$ (depending on the parity of $m$) can not be translated to a module at the bottom level of the diagram. Nevertheless, it seems very plausible to me that this quite small gap in the proof of an analogue to Lemma \ref{deplem} may be bridged by other means. Here one may mention one additional case which has so far been swept under the rug, namely the case where $2\not\in P$ but where $\mathcal{M}$ contains some $D_{2n}$-module with $n$ even. Here, like above, the downward translation used for Lemma \ref{deplem} seems to fail, but my guess is that one could resolve this issue without much trouble and arrive at results similar to the Theorem \ref{babybasisthm} case. 

As for hopes of finding an analogue to Corollary \ref{ccor}, it is easily verified (e.g. by computing $\res_2\res_2\ind_2V_{-1,1}(2m)\ne \res_2\ind_2\res_2V_{-1,1}(2m)$) that $\res_2\ind_2$ is not central in $A_{P,\mathcal{M}}$. Instead, we have the following relations.
\begin{myprop}
Let $P\ni 2$ be a set of prime numbers, and let $\mathcal{M}\subset\mathcal{G}$ be an $A_P$-submodule. Then
\begin{enumerate}
\item[$($i$)$]
$\varphi_{P,\mathcal{M}}(\ind_2\res_2\ind_2)=\varphi_{P,\mathcal{M}}(2\ind_2)$,

\item[$($ii$)$]
$\varphi_{P,\mathcal{M}}(\res_2\ind_2\res_2)=\varphi_{P,\mathcal{M}}(2\res_2)$.
\end{enumerate}
\end{myprop}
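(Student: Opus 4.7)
The plan is to prove both relations by checking that, for each isomorphism class of simple module $M \in \mathcal{G}$, applying either side to $M$ yields the same result; this suffices because of the observation following the definition of $\varphi_{P,\mathcal{M}}$ that two elements of $A_P$ have the same image if and only if they agree pointwise on all of $\mathcal{M}$. The computations are in exactly the same spirit as Proposition \ref{resindprop} and the proof of Proposition \ref{relprop}. I would also note that (ii) follows from (i) via Frobenius reciprocity (since $\ind_2$ is biadjoint to $\res_2$), so in principle one may reduce to the single statement (i); nevertheless, a direct verification of (ii) is just as easy and makes the exposition self-contained.

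For (i), I would split into three cases according to the classification of simple $D_{2n}$-modules. If $M = V_{1,b}(n)$, then by Proposition \ref{resindprop}(iii) we have $\ind_2 V_{1,b}(n) = V_{1,b}(2n) \oplus V_{-1,b}(2n)$; applying $\res_2$ (via part (i) of the same proposition, noting that $|1| = |-1| = 1$) gives $2V_{1,b}(n)$, and a second application of $\ind_2$ returns $2\ind_2 V_{1,b}(n)$. If $M = V_{-1,b}(n)$ (which forces $n$ even, $n \geq 4$), then $\ind_2 V_{-1,b}(n) = W_{n/2}(2n)$; now $\res_2 W_{n/2}(2n) = W_{n/2}(n)$, and since $n/2 \in \tfrac{1}{2}\Z n \setminus \Z n$, Lemma \ref{wlem}(iv) gives $W_{n/2}(n) \cong V_{-1,1}(n) \oplus V_{-1,-1}(n)$; applying $\ind_2$ to each summand (using the $p=2$, $a=-1$ case of Proposition \ref{resindprop}(iii)) produces $W_{n/2}(2n) \oplus W_{n/2}(2n) = 2\ind_2 V_{-1,b}(n)$. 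Finally, for simple $M = W_k(n)$ (so $k \in \lb 1,n/2\rp$ and $k \notin \tfrac{1}{2}\Z n$), Proposition \ref{resindprop}(iv) gives $\ind_2 W_k(n) = W_k(2n) \oplus W_{-k+n}(2n)$; restricting via (ii) of that proposition yields $W_k(n) \oplus W_{-k+n}(n)$, and Lemma \ref{wlem}(i) identifies $W_{-k+n}(n) \cong W_{-k}(n) \cong W_k(n)$, so we get $2W_k(n)$, and one more $\ind_2$ yields $2\ind_2 W_k(n)$.

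For (ii), one repeats essentially the same computation but starting with simple $D_{4n}$-modules. The potential complication to watch out for is that after one application of $\res_2$ we may land on a module $W_k(n)$ which is no longer simple (when $k \in \tfrac{1}{2}\Z n$, in which case Lemma \ref{wlem} splits it into a sum of two $V_{\pm 1, \pm}$ modules); one simply continues the computation on each summand separately, and everything combines additively. The only real obstacle is bookkeeping: handling all of the subcases prescribed by Lemma \ref{wlem} and by the branching of $\ind_2$ on modules with $a=\pm 1$ without conflating them. Since each case is a one-line computation with Proposition \ref{resindprop}, the argument ultimately mirrors the structure of the proof of Proposition \ref{relprop}.
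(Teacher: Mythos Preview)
Your proposal is correct and follows essentially the same approach as the paper: the paper's proof simply says the result is obtained by straightforward computation using Proposition~\ref{resindprop} (or Figure~2), in the same manner as the proof of Proposition~\ref{relprop}, and your case-by-case verification on simple modules is exactly that computation written out in detail. Your additional remark that (ii) can alternatively be deduced from (i) via biadjointness is a nice observation, though not needed for the argument.
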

\begin{proof}
This is done by straightforward computation using Proposition \ref{resindprop} (or by looking at Figure 2) similarly to the proof of Proposition \ref{relprop}.
\end{proof}
We also have the following relation.
\begin{myprop}
Let $P\ni 2$ be a set of prime numbers, and let $\mathcal{M}\subset\mathcal{G}$ be an $A_P$-submodule. Then
\begin{equation*}
\varphi_{P,\mathcal{M}}((\res_2^2\ind_2^2)^4)=\varphi_{P,\mathcal{M}}(6(\res_2^2\ind_2^2)^3-8(\res_2^2\ind_2^2)^2).
\end{equation*}
\end{myprop}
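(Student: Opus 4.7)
Write $x=\res_2^2\ind_2^2$. The claim is equivalent to $x^4-6x^3+8x^2=0$ in $A_{P,\mathcal{M}}$, i.e.\ $x^2(x-2)(x-4)=0$. Since $\varphi_{P,\mathcal{M}}(z_1)=\varphi_{P,\mathcal{M}}(z_2)$ iff $z_1M=z_2M$ for every $M\in\mathcal{M}$, it suffices to verify this polynomial identity on every simple $D_{2n}$-module. The plan is therefore to compute the action of $x$ on each isomorphism class of simples and observe that in every case the minimal polynomial of $x$ divides $x(x-2)(x-4)$, which is enough (and in fact stronger than) the asserted relation.

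The concrete cases are as follows. First, for $W_k(n)$ with $k\notin\frac12\Z n$, a twofold application of Proposition \ref{resindprop}(iv) gives $\ind_2^2 W_k(n)=W_k(4n)\oplus W_{-k+2n}(4n)\oplus W_{-k+n}(4n)\oplus W_{k+n}(4n)$; each summand satisfies the non-splitting condition of Lemma \ref{wlem}, so $\res_2^2$ collapses them all to $W_k(n)$ (using $W_{\pm k+jn}(n)\cong W_k(n)$), yielding $xW_k(n)=4W_k(n)$. Next, for $V_{1,b}(n)$, Proposition \ref{resindprop}(iii) gives $\ind_2 V_{1,b}(n)=V_{1,b}(2n)\oplus V_{-1,b}(2n)$, and a second induction produces $V_{1,b}(4n)\oplus V_{-1,b}(4n)\oplus W_n(4n)$; restricting twice (with $W_n(4n)\to W_n(2n)\to V_{1,1}(n)\oplus V_{1,-1}(n)$ via Lemma \ref{wlem}) gives $xV_{1,b}(n)=3V_{1,b}(n)+V_{1,-b}(n)$, so $x$ acts on $\spann\{V_{1,1}(n),V_{1,-1}(n)\}$ by the matrix $\bigl(\begin{smallmatrix}3&1\\1&3\end{smallmatrix}\bigr)$ with eigenvalues $2$ and $4$. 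Finally, for $V_{-1,b}(n)$ (with $n$ even), Proposition \ref{resindprop}(iii) gives $\ind_2 V_{-1,b}(n)=W_{n/2}(2n)$, then $\ind_2^2 V_{-1,b}(n)=W_{n/2}(4n)\oplus W_{3n/2}(4n)$; restricting twice (each $W$ splits at the last step into $V_{-1,1}(n)\oplus V_{-1,-1}(n)$ by Lemma \ref{wlem}(iv)) yields $xV_{-1,b}(n)=2V_{-1,1}(n)+2V_{-1,-1}(n)$, so on $\spann\{V_{-1,1}(n),V_{-1,-1}(n)\}$ the matrix of $x$ is $\bigl(\begin{smallmatrix}2&2\\2&2\end{smallmatrix}\bigr)$ with eigenvalues $0$ and $4$.

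To conclude, the only eigenvalues of $x$ occurring on any simple module lie in $\{0,2,4\}$, so $x(x-2)(x-4)$ annihilates every simple, whence $x^2(x-2)(x-4)=0$ as well, proving the relation. The computations can alternatively be checked by reading off the induction/restriction diagram in Figure 2, in the same spirit as Proposition \ref{relprop}. The only substantive care needed — and the main place where an error could creep in — is the bookkeeping in Step 1, where one must confirm that none of the four summands of $\ind_2^2 W_k(n)$ crosses into the $\frac12\Z$-locus of Lemma \ref{wlem} under either restriction, so that no spurious $V_{\pm1,\pm1}$ terms appear; this reduces to the assumption $k\notin\frac12\Z n$ and a short verification.
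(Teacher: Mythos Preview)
Your proof is correct and follows the same approach as the paper (direct computation on each isomorphism type of simple module, as in Proposition~\ref{relprop}). In fact you establish the slightly stronger relation $x(x-2)(x-4)=0$, since each of the matrices you compute is diagonalizable with eigenvalues in $\{0,2,4\}$; the paper's stated relation $x^2(x-2)(x-4)=0$ is an immediate consequence.
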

\begin{proof}
This is again done by straightforward computation using Proposition \ref{resindprop} (or by looking at Figure 2) similarly to the proof of Proposition \ref{relprop}.
\end{proof}

There may well exist additional relations, but the problems of finding these and ultimately a basis for $A_{P,\mathcal{M}}$ are likely more difficult to solve than for the case of odd primes, although possibly within reasonable reach for future investigation.

There are additional directions which would be natural to pursue on the topic of the algebras $A_{P,\mathcal{M}}$. One is that of the cases where $\mathcal{M}$ does not necessarily contain either all or none of the simple $D_{2n}$-modules for a fixed $n$. As mentioned in the beginning of Section \ref{s5}, this should not be too difficult. Another is to go the Coxeter route when defining the dihedral groups, and in particular obtain a well-defined group $D_{2n}$ also for $n=1,2$. This would remove the relevance of total nadirs in our proof, giving us no reason to distinguish between the cases of Theorem \ref{babybasisthm} and Theorem \ref{basisthm} respectively. I expect the outcome to be very similar to the Theorem \ref{babybasisthm} case. In all of these cases, it may be interesting to look into the representation theory of the algebras $A_{P,\mathcal{M}}$. 

Finally, we note that a more general class of algebras corresponding to induction/restriction diagrams of sufficient regularity should be amenable to methods used in this paper. Indeed, diagrams satisfying the property described in the discussion preceding Lemma \ref{partfunlem} should as noted above admit analogues to Proposition \ref{nadirprop} and Lemma \ref{nadirlem}, and also an analogue to Lemma \ref{rellem} and Proposition \ref{relprop} by the discussion succeeding the latter.

\end{document}